\documentclass[11pt,a4paper,final]{amsart}
\usepackage[latin1]{inputenc}
\usepackage{amsmath}
\usepackage{amsfonts}
\usepackage{amssymb}
\usepackage{amsthm}
\usepackage{graphicx}
\usepackage{a4wide}
\usepackage{todo}
\usepackage{latexsym}
\usepackage{cite}
\usepackage{mathrsfs}
\usepackage{bbm}
\usepackage{subfigure}
\usepackage{float}
\usepackage{epsfig}
\usepackage{epstopdf}
\usepackage{mathtools}
\usepackage{listings}
\usepackage{pgf}
    \lstset{basicstyle=\small\ttfamily}
    \lstset{breaklines=true}
    \lstset{language=Matlab}
\usepackage[T1]{fontenc}
\usepackage[latin1]{inputenc}
\usepackage[english]{babel}
\usepackage{pifont,tabularx}
\usepackage{stmaryrd}
\usepackage{dsfont}
\usepackage{theoremref}
\usepackage{nicefrac}
\usepackage{enumerate}
\usepackage{natbib}
\usepackage{subfigure}

\usepackage{tikz}
\usepackage{pgfplots}
\usepackage{pgfplotstable}
\pgfplotsset{compat=1.7}
\usepackage{caption}
\usepackage{hyperref}
\hypersetup{
	colorlinks,
	citecolor=blue,
	linkcolor=blue,
	urlcolor=green}

\numberwithin{equation}{section}

\theoremstyle{plain} 
\newtheorem{thm}{Theorem}[section]
\newtheorem{prop}[thm]{Proposition}

\newtheorem{lem}[thm]{Lemma}

\newtheorem{obs}[thm]{Remark}

\theoremstyle{remark}


\usepackage{graphicx} 

\title{Collisions of random walks on comb graphs with a planar base}

\author{Umberto de Ambroggio}
\address{Department of Mathematics, National University of Singapore}
\curraddr{10 Lower Kent Ridge Road, National University of Singapore}
\email{umberto@nus.edu.sg}
\thanks{}

\author{Maximilian Nitzschner}
\address{Department of Mathematics, The Hong Kong University of Science and Technology}
\curraddr{Clear Water Bay, Kowloon, Hong Kong}
\email{mnitzschner@ust.hk}
\thanks{}

\author{Carlo Scali}
\address{School of Computation, Information and Technology, Technische Universit{\"a}t M{\"u}nchen}
\curraddr{Boltzmannstra{\ss}e 3, 85748, Garching bei M{\"u}nchen}
\email{carlo.scali@tum.de}
\thanks{}

\date{\today}

\begin{document}
\begin{abstract}
In this article we study collisions of two independent random walks on comb graphs $\mathrm{Comb}(\tilde{G},f)$ for a large class of recurrent planar graphs $\tilde{G}$ and profile functions $f$, the latter governing the length of vertical segments (called ``teeth'') attached to vertices of the base graph $\tilde{G}$. We prove that the number of collisions of two random walks starting from the same site undergoes a phase transition depending on the growth of $f$. As a benchmark example, we show that for $\mathrm{Comb}(\mathbb{Z}^2,f_\gamma)$ with $f_\gamma(z) = \log^{\gamma}(\|z\|_\infty\vee 1)$ and $\|\cdot \|_\infty$ denoting the supremum norm, two independent random walks started at the origin collide \textit{finitely} often almost surely if $\gamma > 1$, answering a question of Barlow, Peres, and Sousi~\cite{BPS}, who established that infinitely many collisions occur almost surely if $\gamma \leq 1$. We furthermore establish phase transitions in the cases where the base graph $\tilde{G}$ is pre-fractal, or a typical realization of a supercritical cluster of planar Bernoulli bond percolation. \smallskip

\noindent
\textbf{MSC:} 60J10 (primary), 05C81, 60J35. 

\end{abstract}
\maketitle

\section{Introduction}
\label{sec:Intro}
In the present article we investigate collisions of independent (discrete-time) random walks on a large class of comb-like graphs where the base graph is planar. Let $G = (V,E)$ be a connected, locally finite, infinite graph with vertex set $V$ and edge set $E$, such that a single random walk on $G$ is recurrent. We say that $G$ has the \textit{infinite collision property} if two independent simple random walks on $G$ starting from the same vertex collide infinitely many times, almost surely. \medskip

While for \textit{transitive} recurrent graphs $G$ the infinite collision property always holds, the article~\cite{KP} provides a surprising generic example of a class of recurrent graphs $G$ in which two independent simple random walks collide only \textit{finitely many times}, almost surely. These graphs $G$ considered in~\cite{KP} are comb-like graphs $\mathrm{Comb}(\tilde{G},\mathbb{Z})$, which are constructed from a given regular  recurrent graph $\tilde{G} = (\tilde{V},\tilde{E})$ by attaching copies of $\mathbb{Z}$ to each vertex. As a refinement, one can consider for a given \textit{profile function} $f : \tilde{V} \to \mathbb{N} \cup \{+\infty\}$ the subgraph $\mathrm{Comb}(\tilde{G},f)$ in which only vertices $(x,n)$ with $0 \leq n \leq f(x)$ of the vertex set of $\mathrm{Comb}(\tilde{G},\mathbb{Z})$ are retained. Informally, $\mathrm{Comb}(\tilde{G},f)$ can be viewed as a truncated version of $\mathrm{Comb}(\tilde{G},\mathbb{Z})$ with vertical segments (also called \textit{teeth}) of length $f(x)$ attached to each $x \in \tilde{V}$, and the question whether the infinite collision property holds for $\mathrm{Comb}(\tilde{G},f)$ depends in a delicate way on the geometry of $\tilde{G}$ and the profile function $f$. \medskip 

Collisions of two (or more) random walks have been the object of significant interest in recent years for graphs of comb-type in the case $\tilde{G} = \mathbb{Z}$, see~\cite{BPS,Chenchen,CWZ08,CroydonDeAmbroggio}, as well as for various classes of other graphs, e.g.~in~\cite{BPS,ChenChen10,HH,HP,Wat23} (a more detailed overview of the relevant literature is given later in this introduction). \medskip 

The principal contribution of the present article is to determine, for a large class of planar graphs $\tilde{G}$, the leading-order growth rate of certain profile functions $f$, depending only on the distance to a fixed vertex $o \in \tilde{V}$, that separates a phase of infinitely many collisions of two independent random walks from a phase of finitely many collisions. As a specific example, we consider $\tilde{G} = \mathbb{Z}^2$, and show that for a growth profile $ f_\gamma(z) = \lfloor\log^\gamma(\|z\|_\infty \vee 1) \rfloor$, where $\gamma \in (0,\infty)$ (with $\|\cdot\|_\infty$ denoting the supremum norm, $\lfloor \cdot \rfloor$ the lower integer part, and $a \vee b$ the maximum of two real numbers $a,b$), two random walks on the graph $\mathrm{Comb}(\mathbb{Z}^2,f_\gamma)$ starting from the same vertex collide finitely often almost surely if, and only if, $\gamma > 1$. This characterizes the phase transition up to leading order and answers a question raised by Barlow, Peres, and Sousi in~\cite[Section 6, 5.]{BPS}, where it is proved that there are infinitely many collisions almost surely when $\gamma \le 1$. We also treat a general class of comb graphs over $\tilde{G}$ where $\tilde{G}$ is a pre-fractal graph, subject to certain regularity properties, as well as the case in which $\tilde{G}$ is a (typical) realization of a  supercritical Bernoulli bond-percolation cluster in $\mathbb{Z}^2$, conditioned to contain the origin.  \medskip

We now introduce the required notation and our results in a more precise way. Given a connected, locally finite, infinite, recurrent \textit{base graph} $\tilde{G} = (\tilde{V},\tilde{E})$ and a profile function $f : \tilde{V} \to \mathbb{N} \cup \{+\infty\}$, we define the comb graph $G = \mathrm{Comb}(\tilde{G},f) = (V,E)$ to have vertex set 
\begin{equation}
    V \coloneqq \{ (v,n) \in \tilde{V} \times  \mathbb{N} \, : \, 0 \leq n \leq f(v) \},
\end{equation}
and edge set
\begin{equation}
\begin{split}
    E & \coloneqq \Big\{\{(u,0),(v,0) \} \, : \,  \{u,v\} \in \tilde{E} \Big\}  \cup \Big\{\{(v,m),(v,n) \} \, : \, v \in \tilde{V}, |m-n| = 1 \Big\}.
    \end{split}
\end{equation}
We let $(X_n)_{n\in \mathbb{N}}$ and $(Y_n)_{n \in \mathbb{N}}$ be two independent discrete-time simple random walks on $G$, starting under the probability measure $\mathbb{P}_{x,y} \coloneqq \mathbb{P}_x \otimes \mathbb{P}_y$ from $x \in V$ and $y \in V$, respectively (see Section~\ref{sec:Notation} for further details concerning the notation). 

We are interested in the total number of collisions of $X$ and $Y$ on $\mathrm{Comb}(\tilde{G},f)$, which is formally given by the $\mathbb{N} \cup \{+\infty\}$-valued random variable
\begin{equation}
    \mathcal{Z} = \sum_{n = 0}^\infty \mathds{1}{\{n \in \mathbb{N} \, : \,  X_n = Y_n \}}.
\end{equation}
We say that the graph $G$ has the \textit{infinite collision property} if 
\begin{equation}
\label{eq:Infinite-coll}
    \mathbb{P}_{x,x}(\mathcal{Z} = \infty) = 1, \qquad \text{for all }x \in V. 
\end{equation}
On the other hand, $G$ has the \textit{finite collision property} if 
\begin{equation}
\label{eq:Finite-coll}
    \mathbb{P}_{x,x}(\mathcal{Z} < \infty) = 1, \qquad \text{for all }x \in V. 
\end{equation}
By~\cite[Proposition 2.1]{BPS}, one has a $0$-$1$ law stating that
\begin{equation}\text{
        $G$ fulfills either~\eqref{eq:Infinite-coll} or~\eqref{eq:Finite-coll}}
\end{equation}
(in fact, this holds for \textit{any} locally finite, connected recurrent graph, although we will only consider graphs of comb-type in this article). Our interest is in comb graphs with a radially growing ``tooth profile'' $f$, which means that for a fixed origin $o \in \tilde{V}$, one has
\begin{equation}
 f(v) = \varrho(d(o,v)),   
\end{equation}
where $\varrho : \mathbb{N} \to \mathbb{N} \cup \{+\infty\}$ is a non-decreasing function and $(d(u,v))_{u,v \in \tilde{V}}$ is a metric on $\tilde{V}$. Typically, $d(\cdot,\cdot)$ will be chosen either as the graph distance $d_{\tilde{G}}$ on $\tilde{G}$ or the metric induced by the sup-norm distance on $\mathbb{Z}^2$. For the examples below, we usually consider for varying $\gamma \in (0,\infty)$ the functions
\begin{equation}
\label{eq:Generic-teeth}
    \begin{cases}
        \varrho_{p,\gamma}(k) = \lfloor k^\gamma \rfloor & \text{(polynomial growth), or} \\
         \varrho_{\ell,\gamma}(k) = \lfloor \log^\gamma(k \vee 1 ) \rfloor & \text{(logarithmic growth)}.
    \end{cases}
\end{equation}
Our first result concerns the comb graph with a pre-fractal recurrent base graph $\tilde{G}$, see Section~\ref{sec:Fractal-base} for a detailed description of the set-up. Essentially, we require the base graph $\tilde{G}$ to fulfill certain regularity properties~\eqref{eqn:VolGrowth} and \eqref{eqn:ResGrowth}, 
which correspond respectively to uniform, up-to-constants upper and lower bounds for the volume of graph-distance balls of radius $k \geq 0$ by $k^\alpha$ (with $\alpha \in [1,2]$), and to uniform, up-to-constants upper and lower bounds for the effective resistance between vertices $u,v \in \tilde{V}$ in terms of $d_{\tilde{G}}(u,v)^{\beta - \alpha}$ (with $\beta \in [2,\alpha+1]$, $\beta > \alpha$).
Notable examples of graphs covered by our assumptions are planar graphs associated with regular fractals such as the Sierpi{\'n}ski gasket and carpet and Vicsek sets. Let us remark that, in many of those instances, the parameters $\alpha$ and $\beta$ that characterize~\eqref{eqn:VolGrowth} and \eqref{eqn:ResGrowth} are well-understood. For example, in the case of the Sierpi{\'n}ski gasket, we have $\alpha = \log(3)/\log(2)$ and $\beta = \log(5)/\log(2)$. In Theorem~\ref{theo:Weak}, we prove that
\begin{equation}
\label{eq:Main-result-Fractal-Intro}
    \begin{minipage}{0.8\textwidth}
        Under assumptions~\eqref{eqn:VolGrowth} and \eqref{eqn:ResGrowth}, 
        $\mathrm{Comb}(\tilde{G},f_\gamma)$ with $f_\gamma(v) = \lfloor d_{\tilde{G}}(o,v)^\gamma \rfloor$ has the infinite collision property if $\gamma \leq \beta - \alpha$, and the finite collision property if $\gamma > \beta-\alpha$.
    \end{minipage}
\end{equation}
Next, we consider the comb graph with base graph $\tilde{G} = \mathbb{Z}^2$. We prove in Theorem~\ref{theo:2d} that 
\begin{equation}
\label{eq:Main-result-2D-Intro}
    \begin{minipage}{0.8\textwidth}
        $\mathrm{Comb}(\mathbb{Z}^2,f_\gamma)$ with $f_\gamma(z) = \lfloor \log^\gamma(\|z\|_\infty \vee 1) \rfloor$ has the infinite collision property if $\gamma \leq 1$, and the finite collision property if $\gamma > 1$.
    \end{minipage}
\end{equation}
While the infinite collision property for $\gamma \leq 1$ follows in a straightforward manner from the Green kernel criterion of~\cite{BPS} as was proved in the latter, the finite collision property for $\gamma > 1$ requires a fine understanding of the heat kernel on $\mathrm{Comb}(\mathbb{Z}^2,f_\gamma)$.
The logarithmic growth rate~\eqref{eq:Main-result-2D-Intro} at which the phase transition occurs should be compared with the comb graph  $\mathrm{Comb}(\mathbb{Z},f_\gamma)$, with the polynomial profile $f_\gamma(z) = \lfloor |z|^\gamma \rfloor$, where the infinite collision property holds, if and only if, $\gamma \leq 1$, see~\cite[Section 4]{BPS}. \medskip

Finally, in Section~\ref{sec:Random-base-graphs} we consider the case in which the underlying graph $\tilde{G}$ is a typical realization of a planar Bernoulli (bond) percolation cluster $\mathcal{C}_\infty$, with parameter $p \in (p_c(2),1]$, where $ p_c(2) = \frac{1}{2}$ is the percolation threshold, see the beginning of Section~\ref{sec:Random-base-graphs} for precise definitions. We show in Theorem~\ref{theo:perc} that, conditionally on $\{0 \in \mathcal{C}_\infty\}$, for almost every realization of $\mathcal{C}_\infty$ one has that
\begin{equation}
\label{eq:Main-result-perc-Intro}
    \begin{minipage}{0.8\textwidth}
        $\mathrm{Comb}(\mathcal{C}_\infty,f_\gamma)$ with $f_\gamma(z) = \lfloor \log^\gamma(\|z\|_\infty \vee 1) \rfloor$ has the infinite collision property if $\gamma \leq 1$, and the finite collision property if $\gamma > 1$.
    \end{minipage}
\end{equation}
Note that this last result includes the result for $\mathbb{Z}^2$ if we take $p = 1$. We first show the proof for the more regular case of $\mathbb{Z}^2$ and then extend it to $\mathcal{C}_\infty$, because the latter case is more technical and requires heavier notation. \medskip

\textbf{Related work.} We briefly give an account of some previous results regarding collisions of random walks, making no attempt to provide an exhaustive list. One of the first works on this topic is by Krishnapur and Peres~\cite{KP}. There, the authors prove that the graph $\mathrm{Comb}(\mathbb{Z},\mathbb{Z})$, i.e.~$\mathbb{Z}$ with (bi-)infinite comb teeth which are also copies of $\mathbb{Z}$ has the finite collision property. As pointed out in the same reference, their method applies to general comb graphs $\mathrm{Comb}(\tilde{G},\mathbb{Z})$ with (bi-)infinite teeth, provided that $\tilde{G}$ is a connected, recurrent, infinite, and regular graph. Later Barlow, Peres, and Sousi \cite{BPS} studied the transition between the finite and infinite collision property on several graphs, including $\mathrm{Comb}(\mathbb{Z},f)$ with a polynomially growing tooth profile $f$, spherically symmetric trees, and typical realizations of certain random recurrent graphs, thereby substantially strengthening a previous result by Chen, Wei, and Zhang~\cite{CWZ08} obtained for $\mathrm{Comb}(\mathbb{Z},f)$. For $\mathrm{Comb}(\mathbb{Z},f)$, Chen and Chen have studied in \cite{Chenchen} the nature of the phase transition beyond the leading order, by introducing multiplicative corrections of logarithmic order to the polynomial growth rate of $f$. We will describe part of the work~\cite{BPS} in more depth below as it is the most relevant in comparison to the present one. In \cite{HP}, Hutchcroft and Peres established a general criterion applicable to certain ``reversible'' random recurrent graphs to derive the infinite collision property. There are also several papers that explore the infinite collision property for random walks in random environment. Among others, let us cite the works of Halberstam and Hutchcroft~\cite{HH} on random dynamical conductances (in dimension 2), of Devulder, Gantert, and P\`ene~\cite{DGP1, DGP2} for the classical random walk in i.i.d.~random environment on $\mathbb{Z}$, and of Devulder~\cite{devulder2025infinitely} concerning collisions between a simple random walk and multiple random walks in a random environment on $\mathbb{Z}$. Recently, the first author and Croydon \cite{CroydonDeAmbroggio} established a similar phase transition for infinitely (respectively, finitely) many triple collisions on $\mathrm{Comb}(\mathbb{Z},f)$ with logarithmically growing tooth profiles $f$. \medskip

\textbf{Strategy of the proof.}  To put our results into context and to outline our proof strategy, we briefly return to the work \cite{BPS}. There, the authors devised a criterion that implies the infinite collision property in terms of the killed Green kernel (the precise statement is given in~\eqref{eq:Infinite-collision}). Using the latter,
they established that the comb graph $\mathrm{Comb}(\mathbb{Z},f_\gamma)$ with $f_\gamma(x) = \lfloor |x|^\gamma\rfloor, x \in \mathbb{Z}$, has the infinite collision property if $\gamma \le 1$. The Green kernel criterion of \cite{BPS} can be used in a rather straightforward manner to prove the infinite collision property, up to a certain growth rate, for many planar graphs with dimensions $1< d \le 2$, suggesting that the phase transition might occur (at leading order) when the Green kernel criterion becomes non-effective. Our results essentially confirm this picture by establishing that the Green kernel criterion of \cite{BPS} indeed governs the phase transition at the first order. \medskip

The approach in~\cite{BPS} to show the finite collision property of $\mathrm{Comb}(\mathbb{Z},f_\gamma)$ when $\gamma > 1 $ is to partition the graph into certain subgraphs and to prove that the probabilities to observe at least one collision in each such subgraph are summable. This relies on fairly precise estimates for the heat kernel on the comb graph, and most of our work focuses on deriving such estimates in the context of more complex underlying graphs $\tilde{G}$. \medskip

 In Section~\ref{sec:Fractal-base} we work on comb graphs with a base graph $\tilde{G}$ fulfilling~\eqref{eqn:VolGrowth} and \eqref{eqn:ResGrowth}, and the general analysis proceeds similarly to that for $\mathrm{Comb}(\mathbb{Z},f_\gamma)$ in~\cite{BPS}. In particular, the results from Lemma~\ref{lemma:ExitCombLDP} to Lemma~\ref{lemma:SmallTimeFract} may be viewed as analogues of similar controls established in~\cite{BPS}, in the specific context of $\tilde{G} = \mathbb{Z}$. However, somewhat surprisingly, it turns out that the ostensibly most natural partition of the base graph (that is, by concentric spheres of the base graph with their attached teeth) is \textit{not} sufficient for our purposes. This is essentially due to the lack of control on the size of spheres in fractal graphs. Instead, we re-construct balls using a ``smoothed'' version of spheres. In this procedure, while we lose a bit of control on the heat kernel, we gain regularity of the objects we are summing over, and the bounds so obtained are precise enough to conclude the proof. \medskip
 
 In Section~\ref{sec:2D-Comb} we study the comb graph with base graph $\mathbb{Z}^2$. Obtaining the appropriate heat-kernel bound on the former is the most involved part, and constitutes the technical core of the present paper. Most of this work is encapsulated in the proof of Lemma~\ref{lemma:HeatKernelZ2k}. The key insight is that the size of the teeth in a ball of radius $k \geq 1$ is at most a power of $\log(k)$ and thus much smaller than the relevant (diffusive) time scale of the walk. We are able to exploit this fact to show good concentration properties of the sum of the times spent in vertical excursions, i.e.~between consecutive horizontal steps. These concentration estimates lead to meaningful comparisons between the heat kernel and a truncated Green kernel. \medskip
 
Finally, in Section~\ref{sec:Random-base-graphs} we deal with comb graphs for which the underlying graph is the supercritical Bernoulli percolation cluster. Remarkably, the logarithmic scale at which the transition occurs (see \eqref{eq:Main-result-perc-Intro}) essentially coincides with the typical size of subsets of bad isoperimetry in a large box (see, e.g.,~\cite{abe2015effective}). It is thus not clear a priori whether such fluctuations impact the principal order of the phase transition governing the behavior on the comb over $\mathbb{Z}^2$ when moving to $\mathcal{C}_\infty$. However, we show that this is not the case. To that end, we use the classical controls in~\cite{barlow2004random} to show that heat kernels on $\mathcal{C}_\infty$ are eventually well-behaved over sizable boxes, which allows the implementation of the strategy employed for $\mathrm{Comb}(\mathbb{Z}^2,f_\gamma)$. \medskip

\textbf{Organization and conventions.} In Section~\ref{sec:Notation}, we collect further notation and background material on random walks, effective resistances, and heat kernels. In Section~\ref{sec:Fractal-base}, we prove in Theorem~\ref{theo:Weak} the phase transition~\eqref{eq:Main-result-Fractal-Intro} in terms of a polynomial growth rate for a class of graphs with a base graph fulfilling~\eqref{eqn:VolGrowth} and \eqref{eqn:ResGrowth}, including in particular many pre-fractal graphs. Section~\ref{sec:2D-Comb} deals with the benchmark case of comb graphs with base $\mathbb{Z}^2$, and our main result on the phase transition~\eqref{eq:Main-result-2D-Intro} appears in Theorem~\ref{theo:2d}. In Section~\ref{sec:Random-base-graphs} we turn to combs with a random base graph, and prove in Theorem~\ref{theo:perc} that the phase transition at a logarithmic scale is retained if one considers comb graphs in which the base graph is given by a typical realization of a supercritical cluster of Bernoulli bond percolation in $\mathbb{Z}^2$ containing the origin, thus proving~\eqref{eq:Main-result-perc-Intro}. In the Appendix~\ref{appendix} we provide proofs for certain technical random walk estimates that are used at various stages within the article. \medskip

Throughout the article, we denote generic positive constants by $C, c', \dots$, which may change their value from place to place. Dependence on further parameters will be highlighted in the notation. Numbered constants $c_1, c_2, ...$ retain the value given to them at their first appearance in the text. In order to simplify the notation, in several places throughout the paper we avoid using the (more precise) $\lfloor \cdot \rfloor,\lceil \cdot \rceil$ in statements involving lower or upper integer parts, when the meaning is clear from the context.

\section{Notation and useful results on random walks}
\label{sec:Notation}

In this section, we introduce some further notation and present various standard facts about random walks on graphs, Green's functions, and effective resistances. We also state the Green kernel criterion of~\cite{BPS} which is used to prove the infinite collision property of two independent random walks on certain comb graphs. \medskip

\textbf{Graph-theoretic terminology.} We start with some elementary notation. We write $\mathbb{N} = \{0,1,...\}$ for the set of natural numbers including zero. We denote, for numbers $a,b \in \mathbb{R}$, the maximum of $a$ and $b$ by $a \vee b$, the minimum of $a$ and $b$ by $a \wedge b$, and write $\lfloor a \rfloor$ (resp.~$\lceil a \rceil$) for the lower (resp.~upper) integer part of $a$, namely the largest integer smaller or equal to $a$ (resp.~the smallest integer larger or equal to $a$). For a connected and locally finite graph $G = (V,E)$ with vertex set $V$ and edge set $E$, we say that $u \in V$ and $v \in V$ are neighbors (denoted $u \sim v$) if, and only if, $\{u,v\} \in E$. A sequence of distinct neighbors $u_1 \sim u_2 \sim ... \sim u_n$ with $n \geq 2$ is called path of length $n-1$ between $u_1$ and $u_n$. We denote the graph distance between $u \in V$ and $v \in V$ by $d_{G}(u,v)$; this is the length of a shortest path between $u$ and $v$ (with the convention $d_{G}(u,u) = 0$). The cardinality of a set $K \subseteq V$ will be denoted by $|K|$, and if $|K|< \infty$, we write $K \subset\subset V$. For a vertex $v \in V$, we write $\mathrm{deg}(v) \coloneqq |\{u \in V \, \colon \, u  \sim v \}|$ for its degree, i.e.~the number of neighbors of $v$. For a given $x \in V$ and $r \geq 0$, we denote the (closed) ball in graph-distance as $B_G(x,r) \coloneqq \{y \in V \,  \colon \, d_G(x,y) \leq r \}$. For a set $A \subseteq V$ we write $\partial A \coloneqq \{x \in  A \, \colon \, \exists y \in V \setminus A \text{ with } y \sim x\}$ for the boundary of $A$. For two sets $A, B \subseteq V$ we write $\partial_A B$ to denote $\partial B \cap A$. When working on the integer lattice $\mathbb{Z}^2$ or subsets thereof, we slightly abuse notation and denote for instance the graph $G(\mathbb{Z}^2) = (\mathbb{Z}^2, E(\mathbb{Z}^2))$ (with the usual nearest-neighbor edge set $E(\mathbb{Z}^2)$) simply by $\mathbb{Z}^2$. We denote by $\|z\|_\infty \coloneqq |z_1| \vee |z_2|$ and $\|z\|_1 \coloneqq |z_1| + |z_2|$ the supremum norm and the $\ell^1$-norm of a point $z = (z_1,z_2) \in \mathbb{Z}^2$, respectively. When $z \in \mathbb{Z}^2$ and $r \geq 0$ we write $B_\infty(z,r) \coloneqq (z + [-r,r]^2)\cap \mathbb{Z}^2$ for the sup-norm ball of radius $\lfloor r \rfloor$ centered at $z$.  \medskip

\textbf{Random walk on graphs.} We now move to the simple random walk on graphs and related notions. Let $G = (V,E)$ be a locally finite and connected graph. For $x \in V$, we consider the probability measure $\mathbb{P}_x$ on the canonical space $V^{\mathbb{N}}$ under which the coordinate projections $(X_n)_{n \in \mathbb{N}}$ have the law of a (discrete-time) simple random walk starting from $x \in V$, that is, the discrete-time Markov chain with starting distribution $\delta_x$ (the point measure centered in $x \in V$) and transition probabilities given by
\[p_{x,y} = \frac{1}{\mathrm{deg}(x)}\mathds{1}\{y \sim x \}, \qquad x,y  \in V.\]
The expectation under $\mathbb{P}_x$ will be denoted by $\mathbb{E}_x$. We sometimes write $\mathbb{P}^G_x$ instead of $\mathbb{P}_x$ if we wish to emphasize the graph $G$ on which the random walk is defined, and a similar comment applies to $\mathbb{E}_x^G$. When considering two independent random walks, we usually work with the product measure $\mathbb{P}_x \otimes \mathbb{P}_y$ on the space $(V \times V)^{\mathbb{N}}$, and denote the canonical process of the coordinate projections by $(X_n,Y_n)_{n \in \mathbb{N}}$. We denote the family of canonical shift operators on the space of trajectories $V^{\mathbb{N}}$ by $(\theta_n)_{n \geq 0}$, where $\theta_n(w)(k) = w(n+k)$ for every $n,k \in \mathbb{N}$ and $w \in V^{\mathbb{N}}$. For any $A \subseteq V$, we write $T_A \coloneqq \inf\{n \geq 0 \, : \, X_n \notin A \}$ for the exit time of the walk from $A$ and $H_A \coloneqq \inf\{n \geq 0 \, : \, X_n \in A \}$ for the entrance time of the walk into $A$ (with the convention that $\inf \varnothing = +\infty$). When $A = \{x\}$ is a singleton set, we abbreviate $H_{\{x\}}$ by $H_x$. We furthermore define the heat kernel of the random walk $(X_n)_{n \geq 0}$ on $G = (V,E)$ by
\begin{equation*}
    p_n(x,y) \coloneqq \frac{1}{\mathrm{deg}(y)} \mathbb{P}_x(X_n = y), \qquad x,y \in V, n \in \mathbb{N},
\end{equation*}
and the Green kernel of the random walk killed upon exiting $A$ (with $A \subseteq V$) by
\begin{equation*}
    g_A(x,y) \coloneqq \frac{1}{\mathrm{deg}(y)} \sum_{n = 0}^\infty \mathbb{P}_x(X_n = y, n < T_A), \qquad  x,y \in V.
\end{equation*}
 Both $p_n$ and $g_A$ are symmetric in their arguments. We will also need several versions of a truncated Green kernel for the random walk. For integer $0 \le a \le b$, we define 
    \begin{equation*}
        g_{[a, b]}(x, y) \coloneqq  \frac{1}{\mathrm{deg}(y)}\mathbb{E}_{x}\left[ \sum_{n = a}^b \mathds{1}\{X_n = y\} \right] \left( = \sum_{n = a}^b p_n(x,y) \right), \qquad x,y \in V.
    \end{equation*}
When $b \in \mathbb{N}$ and $x,y \in V$ we simply write $g_b(x,y)$ instead of $g_{[0,b]}(x,y)$. By the symmetry of $p_n$ in its arguments, $g_{[a,b]}$ is again symmetric for any integer $0 \leq a \leq b$. We sometimes write $p^G_n(\cdot,\cdot)$, $g_A^G(\cdot,\cdot)$, or $g^{G}_{[a,b]}(\cdot,\cdot)$ for the quantities introduced above if we wish to emphasize the underlying graph on which we are operating. \medskip

We now introduce the fundamental notion of effective resistances on graphs. For a more detailed treatment, see, e.g.,~\cite{Barbook,LP}. Given a connected, locally finite graph $G = (V,E)$ and $f,g : V \to \mathbb{R}$ we define the Dirichlet form as 
\begin{equation*}
    \mathcal{E}^G(f,g) \coloneqq \frac{1}{2}\sum_{x,y \in V, x \sim y} \big(f(x) - f(y) \big) \cdot \big(g(x) - g(y) \big),
\end{equation*}
whenever the expression on the right-hand side is absolutely summable. For disjoint, non-empty subsets $A,B \subseteq V$ we then define the effective resistance between $A$ and $B$ as 
\begin{equation}
\label{eq:EffRes}
R_{\mathrm{eff}}^G(A,B) \coloneqq \inf\left\{ \mathcal{E}^G(f,f) \, \colon \, f\vert_A = 1, f\vert_B = 0, \mathcal{E}(f,f) < \infty \right\}^{-1}
\end{equation}
(recall the convention that $\inf \varnothing = +\infty$). We also abbreviate $R^G_{\mathrm{eff}}(\{x\},B)$ by $R^G_{\mathrm{eff}}(x,B)$ for $x \in V \setminus B$ and $\varnothing \neq B \subseteq V$, and similarly define $R^G_{\mathrm{eff}}(x,y) = R^G_{\mathrm{eff}}(\{x\},\{y\}) $, for $x \neq y\in V$. It follows immediately from the definition~\eqref{eq:EffRes} that $R^G_{\mathrm{eff}}(A_1,B) \geq R^G_{\mathrm{eff}}(A_2,B)$ for any $\varnothing \neq A_1 \subseteq A_2 \subseteq V$, and that $R^G_{\mathrm{eff}}(\cdot,\cdot)$ is symmetric with respect to the interchange of both arguments. For $x \in B \subset\subset V$, one has the identity
\begin{equation}
\label{eq:Green-Eff-Res}
    R^G_{\mathrm{eff}}(x,B^c) = g_B(x,x),
\end{equation}
see, e.g.,~\cite[Theorem 2.8]{Barbook}. The usefulness of~\eqref{eq:Green-Eff-Res} comes from the fact that the effective resistance is amenable to operations of network reduction (see, e.g.,~\cite[Section 2.5]{Barbook} or~\cite[Section 2.3]{LP}). A convenient dual representation for the effective resistance between disjoint, non-empty subsets $A,B \subseteq V$ with $V \setminus (A \cup B)$ finite is Thompson's principle, which states that
\begin{equation}
\label{eq:Thompson}
    R_{\mathrm{eff}}^G(A,B) = \inf\{E(I,I) \, : \, I \in \mathcal{I}_0(A,B) \},
\end{equation}
where $E(I,I) = \frac{1}{2}\sum_{x \in V}\sum_{y \sim x} I_{x,y}^2 $, and  $\mathcal{I}_0(A,B)$ is the space of \textit{flows} from  $A$ to $B$, i.e.~antisymmetric functions $I : V \times V \to \mathbb{R}$ with $I_{x,y} = 0$ if $x\not\sim y$, fulfilling $\sum_{x \in A}\sum_{y \sim x}I_{x,y} = 1$, and $\sum_{y \sim x}I_{x,y} = 0$ for $x \notin A \cup B$, see, e.g.,~\cite[Theorem 2.31]{Barbook}. We are now in the position to state the Green kernel criterion from~\cite[Theorem 3.1]{BPS}:
\begin{equation}
\label{eq:Infinite-collision}
    \begin{minipage}{0.8\linewidth}
        Suppose $G$ is recurrent,  $o \in V$ is a fixed vertex and $(D_r)_{r \in \mathbb{N}}$ is an increasing sequence of subsets $D_r \subsetneq V$ with $\bigcup_{r \in \mathbb{N}} D_r = V$. Suppose that 
        $$
        g_{D_r}(x,x) \leq Cg_{D_r}(o,o) \qquad \text{for all }x\in D_r, r \in \mathbb{N}.
        $$
        Then $G$ has the infinite collision property.
    \end{minipage}
\end{equation}
We will often appeal to~\eqref{eq:Infinite-collision} to prove the infinite collision property for a given comb graph. \medskip

For later applications, we collect several estimates for random walks on both general graphs and some further estimates for random walks on the planar integer lattice $\mathbb{Z}^2$. We will need the following general bound from~\cite[Lemma 4.4]{BPS}: For any locally finite, connected graph $G$ and $B \subseteq V$, one has
\begin{equation}
    \label{eq:Bound-HK-via-Green}
    p_n(x,x) \leq \frac{2g_B(x,x)}{n\mathbb{P}_x(T_B \geq n)}, \qquad x\in B, n \in \mathbb{N}.
\end{equation}
Furthermore, we will use that 
\begin{equation}
\label{eq:Heat-kernel-decreasing}
\begin{split}
    p_{2n+1}(x,x) & \leq p_{2n}(x,x), \\
    p_{2n+2}(x,x) & \leq p_{2n}(x,x), \qquad x \in V, n \in \mathbb{N},
    \end{split}
\end{equation}
see, e.g.,~\cite[Lemma 4.1]{Barbook}. We also note that
\begin{equation}
\label{eq:Green-function-maximal}
g_n(x,x) = \sup_{y \in V} g_n(x,y), \qquad x \in V, n \in \mathbb{N},
\end{equation}
which follows by writing $g_n(y,x) = \frac{1}{\mathrm{deg}(x)} \mathbb{E}_y\left[\sum_{k = H_{x}}^n \mathds{1}\{X_k = x \} \right]$, and using the strong Markov property, as well as the symmetry of $g_n$. \medskip

We conclude this section with a useful occupation time estimate for the planar random walk which will be used in Section~\ref{sec:2D-Comb}. \begin{lem}\label{lemma:MomentsOccTime}
    Consider the simple random walk $X$ on $\mathbb{Z}^2$. We have, for any $z \in \mathbb{Z}^2$ and $t \geq 1$,
    \begin{equation*}
        \mathbb{E}_{z}^{\mathbb{Z}^2}\left[ \left(\sum_{s = 0}^{t} \mathds{1}\{X_s = z\} \right)^{2}\right] \le C \log(t)^4.
    \end{equation*}
\end{lem}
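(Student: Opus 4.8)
The plan is to expand the second moment as a sum over ordered pairs of times, factorize using the Markov property, and thereby reduce the estimate to the (well-understood) growth of the truncated Green function $g_t(z,z)$ of the planar walk. Setting $L_t \coloneqq \sum_{s=0}^t \mathds{1}\{X_s = z\}$, one expands the square, uses symmetry in the two time indices, and applies the Markov property at the smaller of the two times to obtain
\begin{equation*}
\mathbb{E}_z^{\mathbb{Z}^2}[L_t^2] = \sum_{s=0}^t \sum_{u=0}^t \mathbb{P}_z(X_s = z, X_u = z) \le 2 \sum_{s=0}^t \mathbb{P}_z(X_s = z) \sum_{m=0}^{t-s} \mathbb{P}_z(X_m = z) \le 2 \Big( \sum_{s=0}^t \mathbb{P}_z(X_s = z) \Big)^2 .
\end{equation*}
Since $\mathbb{P}_z(X_s = z) = \mathrm{deg}(z)\, p_s(z,z) = 4\, p_s(z,z)$ on $\mathbb{Z}^2$, the right-hand side equals $32\, g_t(z,z)^2$, so it suffices to prove $g_t(z,z) \le C\log t$ for $t$ larger than an absolute constant (for the remaining bounded range of $t$ the asserted bound is immediate).

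For the bound on $g_t(z,z)$ I would confine the walk to a ball. Since $\|X_n - z\|_\infty \le n$ for every $n$ (each step changes exactly one coordinate by $\pm 1$), a walk started from $z$ cannot leave the sup-norm ball $B \coloneqq B_\infty(z,t)$ before time $t+1$; hence the event $\{s < T_B\}$ has full $\mathbb{P}_z$-probability for every $s \le t$, and therefore
\begin{equation*}
g_t(z,z) = \sum_{s=0}^t p_s(z,z) = \frac{1}{\mathrm{deg}(z)}\sum_{s=0}^t \mathbb{P}_z(X_s = z, s < T_B) \le g_B(z,z) = R^{\mathbb{Z}^2}_{\mathrm{eff}}(z, B^c),
\end{equation*}
where the last identity is~\eqref{eq:Green-Eff-Res}. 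The bound $g_t(z,z) \le C\log t$ then follows from the classical logarithmic growth $R^{\mathbb{Z}^2}_{\mathrm{eff}}(z, B_\infty(z,t)^c) \le C\log t$ of the effective resistance of the planar lattice. (Equivalently, one may bypass resistances and invoke directly the standard on-diagonal heat-kernel estimate $p_{2n}(z,z) \le c/n$ for $n \ge 1$, together with $p_{2n+1}(z,z) = 0$, and sum the resulting harmonic series.) Combining this with the first display yields $\mathbb{E}_z^{\mathbb{Z}^2}[L_t^2] \le C (\log t)^2 \le C(\log t)^4$, as claimed.

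I do not anticipate any genuine difficulty here: the substantive content is the elementary factorization in the first display together with the $O(\log r)$ growth of the effective resistance between a vertex and the complement of a radius-$r$ ball of $\mathbb{Z}^2$ (equivalently the $O(1/n)$ decay of the return probability), which is a classical fact about the planar lattice. The fourth power in the statement is a deliberately loose over-estimate — the argument in fact produces $(\log t)^2$ — kept so that the later applications remain comfortably valid.
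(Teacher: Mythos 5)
Your proof is correct, and it takes a genuinely different route from the paper's. The paper invokes the Erd\H{o}s--Taylor large-deviation bound for the local time of the planar walk, $\mathbb{P}_z(U_t > k\log(t)^2) \le t^{-(1-\delta)\pi k}$, then splits on the event $\{U_t > \log(t)^2\}$ to get $\mathbb{E}[U_t^2] \le \log(t)^4 + t^2\, t^{-(1-\delta)\pi}$. You instead expand the second moment over ordered pairs of times, factorize by the Markov property, and reduce to the square of the truncated Green function, which is $O(\log t)$ by the classical resistance (equivalently, on-diagonal heat-kernel) bound. Your argument is more elementary --- it needs only $p_{2n}(z,z)\le c/n$ and nothing from the local-time literature --- and it is sharper, yielding $\mathbb{E}[U_t^2] \le C(\log t)^2$ rather than $(\log t)^4$. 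What the paper's approach buys in exchange is robustness: the Erd\H{o}s--Taylor tail bound controls all moments of $U_t$ at once (giving $\mathbb{E}[U_t^m]\le C_m(\log t)^{2m}$ for every $m$), whereas the factorization you use is special to the second moment; but since the paper only ever uses the second moment, your route is entirely adequate and arguably cleaner. One very minor point, shared with the paper's statement: at $t=1$ the right-hand side is literally $0$ while the left-hand side is at least $1$, so the bound should be read as holding for $t$ bounded away from $1$ (or with $\log t$ replaced by $\log(t\vee 2)$); your parenthetical about the bounded range of $t$ glosses over this in the same way the paper does, which is fine.
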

\begin{proof}
Define $U_t$ to be the number of times up to time $t$ at which the random walk is at $z$, i.e.
\begin{equation*}
    U_t \coloneqq \sum_{s = 0}^{t} \mathds{1}\{X_s = z\}, \qquad t \in \mathbb{N}.
\end{equation*}
We state the following fact, which dates back to the seminal paper~\cite{erdos1960some} (see (3.11) therein, or~\cite[(5.11)]{dembo2001thick}): For each integer $k\ge1$, for every $\delta \in (0,1)$, $z \in \mathbb{Z}^2$, and $t$ large (depending on $\delta$), 
\begin{equation*}
    \mathbb{P}^{\mathbb{Z}^2}_z\left( U_t > k\log(t)^2 \right) \le t^{-(1-\delta) \pi k}.
\end{equation*}
Then, using this fact, we see that for any $z \in \mathbb{Z}^2$ and $t \geq C(\delta)$, one has
\begin{align*}
    \mathbb{E}^{\mathbb{Z}^2}_{z}\left[ \left(U_t \right)^{2}\right] 
    &= \mathbb{E}^{\mathbb{Z}^2}_{z}\left[ \left(U_t\right)^{2} \mathds{1}\{U_t \le \log(t)^2\}\right] + \mathbb{E}^{\mathbb{Z}^2}_{z}\left[ \left(U_t\right)^{2} \mathds{1}\{U_t > \log(t)^2\}\right]\\
    &\le \log(t)^4 + t^2 t^{-(1-\delta) \pi}.
\end{align*}
Taking for instance $\delta = 1/4$, the desired result readily follows.
\end{proof}
\section{Collisions on combs with fractal skeleton}
\label{sec:Fractal-base}

In this section we focus on the case of a comb graph $G = \mathrm{Comb}(\tilde{G},f_\gamma)$ where $\tilde{G} = (\tilde{V},\tilde{E})$ is some generic graph fulfilling the regularity properties~\eqref{eqn:VolGrowth} and~\eqref{eqn:ResGrowth} below, and for fixed $o \in \tilde{V}$, a radial tooth profile $f_\gamma(v) = \varrho_{p,\gamma}(d_{\tilde{G}}(o,v))$ is specified, where we recall the notation~\eqref{eq:Generic-teeth}. In other words, we consider the graph obtained by adding a segment of length $f_\gamma(v) = \lfloor k^\gamma \rfloor$ to each vertex $v \in \tilde{V}$ such that $d_{\tilde{G}}(o,v) = k$. We now state precisely the aforementioned regularity conditions on $\tilde{G}$.
\begin{equation}\tag{VG}
\label{eqn:VolGrowth}
\begin{minipage}{0.8\linewidth}
    Recall the notation $B_{\tilde{G}}(v, k)$ for the closed $d_{\tilde{G}}$-ball centered at $v \in \tilde{V}$  of radius $k \geq 0$. There exists $\alpha\in [1, 2]$ such that for all $k \ge 0$, one has
    \[
    c k ^{\alpha} \le |B_{\tilde{G}}(v, k)| \le Ck^{\alpha}.
    \]
\end{minipage}\end{equation}
\begin{equation}\label{eqn:ResGrowth}\tag{RG}
    \begin{minipage}{0.8\linewidth}
    For the effective resistance $R^{\tilde{G}}_{\mathrm{eff}}(\cdot, \cdot)$ (recall~\eqref{eq:EffRes}), there exist $2 \le \beta \le \alpha + 1$ with $\beta>\alpha$ such that, for all $u, v \in \tilde{V}$, one has
    \[
    cd_{\tilde{G}}(v, u)^{\beta -\alpha} \le R^{\tilde{G}}_{\mathrm{eff}}(v, u) \le Cd_{\tilde{G}}(v, u)^{\beta -\alpha}.
    \]
    \end{minipage}
    \end{equation}
    We remark that the range chosen for the parameters is classically associated with recurrent fractals, see \cite[Section~1]{BarlowCoulhonKumagai} for a more detailed discussion, and the regularity properties are fulfilled for a wide class of such graphs. A prototypical example to keep in mind is the Sierpi{\'n}ski gasket, for which $\alpha = \log(3)/\log(2)$ and $\beta = \log(5)/\log(2)$. \medskip
     
     We note that for graphs of bounded degree,~\eqref{eqn:VolGrowth} and~\eqref{eqn:ResGrowth} imply a lower bound on the effective resistance between a vertex and the complement of an enclosing ball, namely
    \begin{equation}   
    \label{eq:Eff-res-to-boundary}R^{\tilde{G}}_{\mathrm{eff}}(x,B_{\tilde{G}}(x,r)^c) \geq Cr^{\beta - \alpha}, \qquad \text{for all }x \in \tilde{V}, r \geq 1;
    \end{equation}
    see~\cite[Lemma 2.2]{BarlowCoulhonKumagai}. 
    \begin{obs}
For the purpose of our application to collisions of random walks, it is convenient to work with the set of assumptions \eqref{eqn:VolGrowth} and \eqref{eqn:ResGrowth} on the underlying graph $\tilde{G}$, which imply a uniform estimate on the expected exit time of $d_{\tilde{G}}$-balls by random walks, cf.~\eqref{eqn:ExitCondition} below. This follows by a direct application of~\cite[Proposition~3.4]{BarlowCoulhonKumagai}. 
         The latter condition is 
\begin{equation}
\label{eqn:ExitCondition}\tag{$\mathrm{E}(\beta)$}
    ck^\beta \le \mathbb{E}^{\tilde{G}}_v\left[T_{B_{\tilde{G}}(v,k)}\right] \le Ck^\beta, \qquad \text{for all $v \in \tilde{V}$, $k \in \mathbb{N}$}.
\end{equation}        
There is ample literature on the equivalence of various sets of assumptions to \eqref{eqn:VolGrowth} and \eqref{eqn:ResGrowth} or variants thereof. Although we will not directly use it, we remark that under the mild regularity condition that $G$ is of bounded degree (which our concrete examples satisfy), the uniform sub-Gaussian heat kernel upper and lower bounds~\eqref{eqn:HeatKerUB} and~\eqref{eqn:HeatKerLB} given below are equivalent to~\eqref{eqn:VolGrowth} and \eqref{eqn:ResGrowth} (see~\cite[Section 4]{BarlowCoulhonKumagai} and references therein).
These bounds can be stated as follows:
\begin{equation}\label{eqn:HeatKerUB}\tag{UHK}
    p_{n}(x, y) \le C n^{-\frac{\alpha}{\beta}} \exp\left\{- C\left(\frac{d_{\tilde{G}}(x, y)^\beta}{n \vee 1}\right)^{\frac{1}{\beta- 1}}\right\}, 
\end{equation}
and 
\begin{equation}\label{eqn:HeatKerLB}\tag{LHK}
    p_{n}(x, y) +p_{n+1}(x, y) \ge c n^{-\frac{\alpha}{\beta}} \exp\left\{- c\left(\frac{d_{\tilde{G}}(x, y)^\beta}{n \vee1}\right)^{\frac{1}{\beta- 1}}\right\},
\end{equation}
where both inequalities hold for all $n \ge d_{\tilde{G}}(x, y)$, and $x,y \in \tilde{V}$.
    \end{obs}
The main result of the present section is the following.
\begin{thm}\label{theo:Weak}
    Let $G = \mathrm{Comb}(\tilde{G},f_\gamma)$ be the comb graph where the underlying (base) graph $\tilde{G}$ is of bounded degree and satisfies \eqref{eqn:VolGrowth} and \eqref{eqn:ResGrowth}, 
    and the tooth profile is given by
    \begin{equation}
        f_\gamma(v) = \varrho_{p,\gamma}(d_{\tilde{G}}(o,v)), \qquad \text{with~$\varrho_{p,\gamma}(k) = \lfloor k^\gamma \rfloor$ as in~\eqref{eq:Generic-teeth}.}
    \end{equation}
    Then:
    \begin{enumerate}
        \item If $\gamma \le \beta - \alpha$, the graph $G$ has the infinite collision property.
        \item If $\gamma > \beta - \alpha$, the graph $G$ has the finite collision property.
    \end{enumerate}
\end{thm}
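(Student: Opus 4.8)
The plan is to treat the two regimes separately, using the Green-kernel criterion \eqref{eq:Infinite-collision} for the infinite-collision part, and a Borel--Cantelli argument based on heat-kernel estimates for the finite-collision part.

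\medskip

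\textbf{(1) The case $\gamma \le \beta - \alpha$: infinite collisions.}
Here I would apply the criterion \eqref{eq:Infinite-collision} with $o$ the fixed origin of $\tilde{G}$ and with the exhausting sequence $D_r$ taken to be the comb-restriction of a $d_{\tilde{G}}$-ball $B_{\tilde{G}}(o,r)$ together with all its attached teeth. The task is to show $g_{D_r}((v,n),(v,n)) \le C g_{D_r}((o,0),(o,0))$ uniformly over vertices $(v,n) \in D_r$ and $r \in \mathbb{N}$. By \eqref{eq:Green-Eff-Res} this is equivalent to a two-sided comparison of effective resistances $R^{G}_{\mathrm{eff}}((v,n), D_r^c)$ with $R^G_{\mathrm{eff}}((o,0),D_r^c)$. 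I would bound these by network-reduction arguments: moving along a tooth to its base vertex only adds a resistance of order the tooth-length $\le r^\gamma \le r^{\beta-\alpha}$, while within the base graph the resistance from $o$ to $B_{\tilde{G}}(o,r)^c$ is of order $r^{\beta-\alpha}$ by \eqref{eq:Eff-res-to-boundary}; and for a general $v\in B_{\tilde{G}}(o,r)$ one controls $R^{\tilde{G}}_{\mathrm{eff}}(v, B_{\tilde{G}}(o,r)^c)$ from below by a constant multiple of the same quantity, using the resistance lower bound \eqref{eqn:ResGrowth} (the resistance from $v$ to the complement is at least that across a ball of radius $\asymp r$ around $v$ shrunk by a constant) and from above by $R^{\tilde G}_{\mathrm{eff}}(o, B_{\tilde G}(o,r)^c) + R^{\tilde G}_{\mathrm{eff}}(o,v) \le C r^{\beta-\alpha}$. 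Since $\gamma \le \beta-\alpha$, the tooth contribution is absorbed, giving the required uniform comparison; this matches the picture described after \eqref{eq:Main-result-Fractal-Intro} that the Green-kernel criterion governs the first-order transition.

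\medskip

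\textbf{(2) The case $\gamma > \beta - \alpha$: finite collisions.}
Following the strategy of \cite{BPS} outlined in the introduction, I would decompose $\tilde{V}$ into dyadic annuli $A_j = B_{\tilde G}(o,2^{j+1}) \setminus B_{\tilde G}(o,2^j)$ (and the associated comb-pieces with their teeth), and aim to show that $\sum_j \mathbb{P}_{o,o}(\exists\, n:\ X_n = Y_n \in A_j\text{-piece}) < \infty$, after which Borel--Cantelli yields $\mathcal Z < \infty$ a.s.\ (up to the usual reduction handling the starting point and the finitely many close annuli). The probability of a collision inside the $j$-th piece is estimated, via a standard first-moment / last-visit decomposition, in terms of $\sum_{z}\sum_{n} p_n^G((o,0),z)^2 \asymp \sum_n \sum_z p_{2n}^G((o,0),z)\,$-type sums restricted to $z$ in the annulus, which in turn reduce to understanding $g^G_\infty((o,0),(o,0))$-truncated sums and on-diagonal heat-kernel decay $p_n^G((o,0),(o,0))$ on the comb at the relevant time scales $n \asymp (2^j)^\beta$ (the diffusive scale for reaching annulus $j$ in the base). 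Using \eqref{eq:Bound-HK-via-Green} together with the exit-time bound \eqref{eqn:ExitCondition} and a lower bound $\mathbb{P}_{(o,0)}(T_{D}\ge n)\ge c$ for $D$ the comb over a ball of the right radius, I would get $p_n^G((o,0),(o,0)) \le C n^{-1} g^G_{D}((o,0),(o,0)) \le C n^{-1} \cdot (\text{resistance} \asymp r^{\beta-\alpha})$ with $n\asymp r^\beta$, i.e.\ decay like $r^{-\alpha} = n^{-\alpha/\beta}$ up to the effect of the teeth. The crucial point is that the extra teeth of length $\le (2^j)^\gamma$, with $\gamma > \beta-\alpha$, contribute a resistance (hence a multiplicative correction to the Green function and to collision probabilities) that is \emph{larger} than the base contribution, and it is precisely this excess that makes the annular collision probabilities summable; this is where \eqref{eq:Main-result-Fractal-Intro}'s threshold $\beta-\alpha$ enters.

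\medskip

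\textbf{Main obstacle.}
The hard part, as the introduction already flags, is that the naive partition into base-spheres $\{v : d_{\tilde G}(o,v) = k\}$ with their teeth does not work because fractal spheres can have wildly fluctuating cardinalities (there is no analogue of $|\partial B_{\tilde Z}(0,k)| = 2$ as in $\tilde G = \mathbb{Z}$), so the per-piece heat-kernel and resistance bounds cannot be made uniform. I would resolve this exactly as the authors indicate: replace sharp spheres by ``smoothed'' annuli of dyadic width, at the cost of weaker but still-sufficient heat-kernel control, and then show the resulting series still converges when $\gamma > \beta - \alpha$. Concretely this means the technical core will be a sequence of lemmas (the analogues of Lemma \ref{lemma:ExitCombLDP} through Lemma \ref{lemma:SmallTimeFract} referenced in the text) giving (i) exit-time large-deviation controls for the random walk on the comb over a smoothed ball, (ii) on- and near-diagonal heat-kernel upper bounds on such regions incorporating the teeth, and (iii) a summability computation; carefully tracking the tooth-length $\asymp r^\gamma$ versus the base resistance $\asymp r^{\beta-\alpha}$ through all of these is where the real work lies.
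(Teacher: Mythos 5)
Your Part (1) is essentially the paper's argument: the same exhausting sets $D_r$ (comb over $B_{\tilde G}(o,r)$), the same resistance identity $g_{D_r}(o_V,o_V)=R^{\tilde G}_{\mathrm{eff}}(o,B_{\tilde G}(o,r)^c)\geq cr^{\beta-\alpha}$, and the same observation that the tooth adds at most $r^\gamma\le r^{\beta-\alpha}$. The only cosmetic difference is that the paper proves the upper bound $g_{D_r}(x,x)\le Cr^{\beta-\alpha}$ by extending a minimizing flow onto the comb via Thompson's principle, whereas you invoke the resistance triangle inequality $R^{\tilde G}_{\mathrm{eff}}(x_1,B^c)\le R^{\tilde G}_{\mathrm{eff}}(x_1,o)+R^{\tilde G}_{\mathrm{eff}}(o,B^c)$; both are valid. (The ``two-sided comparison'' you mention is more than needed: the criterion \eqref{eq:Infinite-collision} only asks for an upper bound on $g_{D_r}(x,x)$ and a lower bound on $g_{D_r}(o,o)$.)

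For Part (2) the overall shape (decompose, bound per-piece collision probabilities via heat-kernel estimates, sum up, apply \cite[Corollary~2.3]{BPS}) is the right one, but three concrete ingredients are missing. First and most importantly, you never partition the \emph{tooth heights}. The paper's key identity is $\mathbb{E}_{o_V}[\mathcal{Z}_{k,\ell}\mid\tilde{\mathcal Z}_{k,\ell}>0]\ge c\ell$, which ``pays for'' the factor $\ell$ appearing in $\mathbb{E}_{o_V}[\mathcal Z_{k,\ell}]\lesssim \ell\, k^{\beta-\alpha-\gamma'-1+\varepsilon}$; the lower bound requires that the collision occur at height $\asymp\ell$, hence the middle-third sets $\widetilde Q_{k,\ell}$ and the dyadic sum over $\ell\in\{2^m\}$. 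If you only decompose the base into annuli and sum the heat kernel over the entire teeth, the first moment of collisions in annulus $j$ is of order $2^{j\gamma}\cdot 2^{j(\beta-\alpha-\gamma')}$, and since $\gamma'\le\gamma$ and $\beta>\alpha$, this never decays in $j$ --- the argument does not close. Second, the relevant diffusive time scale on the comb over a ball of radius $r$ is $r^{\beta+\gamma'}$ with $\gamma'=\gamma\wedge\beta$, not $r^\beta$: the teeth slow the walk down by a factor comparable to the tooth length, and this shift is precisely what Lemmas~\ref{lemma:ExitCombLDP}--\ref{lemma:SmallTimeFract} quantify (e.g.~$q_t(x,x)\lesssim t^{-(\alpha+\gamma')/(\beta+\gamma')}$). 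Your ``$n\asymp r^\beta$, up to the effect of the teeth'' is exactly the step that needs to be made precise, and getting it wrong changes the exponents. Third, the paper's decomposition of the base is not into dyadic annuli but into unit-width ``smoothed spheres'' $A_k$ constructed so that $|A_k|=\lfloor k^{\alpha-1}\rfloor$ exactly (see~\eqref{eq:Definition-A_k}); this exact cardinality control is what makes the per-shell bound $\mathbb{E}_{o_V}[\mathcal Z_{k,\ell}]\lesssim \ell k^{\beta-\alpha-\gamma'-1+\varepsilon}$ come out with the right $k^{-1}$ factor for summability. A dyadic-annulus variant would likely also work, but it is a different decomposition from the one you attribute to the paper, and you would need to carry through the size of each annulus explicitly.
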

\begin{proof}[Proof of (1)]
We denote each vertex $x\in V$ with two coordinates $x = (x_1, x_2)$, where $x_1$ is the unique point in the vertex set $\tilde{V}$ of the base graph, such that $(x_1,0)$ has minimal distance to $x$ and $x_2 \in \mathbb{N}$ is the ``height'' in the corresponding tooth. We define, for $k \in \mathbb{N}$ and $\tilde{x}\in \tilde{V}$, the sets
\begin{equation}\label{eqn:BallsOnComb}
    D(\tilde{x}, k) \coloneqq \{(x_1, x_2) \in V \colon d_{\tilde{G}}(\tilde{x}, x_1) \le k \}.
\end{equation}
    Statement (1) is a  rather straightforward consequence of~\eqref{eqn:ResGrowth} and the Green kernel criterion~\eqref{eq:Infinite-collision} of~\cite{BPS}, as we briefly explain. Set $o_V = (o,0)$ and define for $r \in \mathbb{N}$ the sets $D_r \coloneqq D(o,r)$ as in~\eqref{eqn:BallsOnComb}. Note that $(D_r)_{r \in \mathbb{N}}$ is an increasing sequence of strict subsets of $V$ with $\bigcup_{r \in \mathbb{N}} D_r = V$. We first argue that 
    \begin{equation}
    \label{eq:Eff-res-equality}
        R^G_{\mathrm{eff}}(o_V,D_r^c) = R_{\mathrm{eff}}^{\tilde{G}}(o,B_{\tilde{G}}(o,r)^c).
    \end{equation}
    Indeed, let $\tilde{h}_r : \tilde{V} \rightarrow \mathbb{R}$ be the minimizer for $R_{\mathrm{eff}}^{\tilde{G}}(o,B_{\tilde{G}}(o,r)^c)^{-1}$ in the definition~\eqref{eq:EffRes}, and $h_r : V \rightarrow  \mathbb{R}$ the corresponding minimizer for  $R^G_{\mathrm{eff}}(o_V,D_r^c)^{-1}$. Note that $h_r$ is constant on each tooth $\{(x_1,x_2) \, : \, 0 \leq x_2 \leq f_\gamma(x_1) \}$, where $x_1 \in B_{\tilde{G}}(o,r)$, since choosing $h_r(x_1,k) \neq h(x_1,0)$ for $1 \leq k\leq f_\gamma(x_1)$ would strictly increase the Dirichlet form. Considering $\tilde{G}$ as a subgraph of $G$ via the inclusion $\tilde{V} \rightarrow V$, $x \mapsto (x,0)$, we thus immediately see that $h_r\vert_{\tilde{V}} = \tilde{h}_r$ for any $r \in \mathbb{N}$, whence~\eqref{eq:Eff-res-equality}. Therefore, it follows that
    \begin{equation}
    \label{eq:Green-lower-first-instance}
        g_{D_r}(o_V,o_V) \stackrel{\eqref{eq:Green-Eff-Res}}{=} R^G_{\mathrm{eff}}(o_V,D_r^c) \stackrel{\eqref{eq:Eff-res-equality}}{=} R_{\mathrm{eff}}^{\tilde{G}}(o,B_{\tilde{G}}(o,r)^c) \stackrel{\eqref{eq:Eff-res-to-boundary}}{\geq} Cr^{\beta-\alpha}.
    \end{equation}
On the other hand, consider $x = (x_1,x_2) \in D_r$ and let $\tilde{I}^{(x_1,r)} \in \mathcal{I}_0(\{x_1\},B_{\tilde{G}}(o,r)^c)$ be the minimizer for $R_{\mathrm{eff}}^{\tilde{G}}(o,B_{\tilde{G}}(o,r)^c)$ in~\eqref{eq:Thompson}. Extending $\tilde{I}^{(x_1,r)}$ to a flow on the graph $G$ by defining $I_{(x_1,k),(x_1,k-1)} = 1$ for $k \in \{1,...,x_2\}$, $I_{(x,0),(y,0)} = \tilde{I}^{(x_1,r)}_{x,y}$ for $x \sim y$ in $\tilde{V}$ (extended antisymmetrically), and $0$ otherwise, the so obtained function $I$ is in $\mathcal{I}_0(\{x\},D_r^c)$ and we obtain
\begin{equation}
\label{eq:Green-upper-first-instance}
    \begin{split}
        g_{D_r}(x,x) & \stackrel{\eqref{eq:Green-Eff-Res}}{=} R_{\mathrm{eff}}^G(x,D_r^c) \stackrel{\eqref{eq:Thompson}}{\leq} E(I,I) \stackrel{\eqref{eq:Thompson}}{=}  x_2 + R_{\mathrm{eff}}^{\tilde{G}}(x_1,B_{\tilde{G}}(o,r)^c) \\ 
        & \stackrel{\eqref{eqn:ResGrowth}}{\leq} r^\gamma + C r^{\alpha - \beta} \stackrel{\gamma \leq \beta - \alpha}{\leq} 2C r^{\beta-\alpha},
    \end{split}
\end{equation}
having also used the monotonicity of the effective resistance in the penultimate step. This shows (1). We show statement (2) in the next subsection.
\end{proof}

\noindent \textbf{Generic notation for comb graphs.} We start by introducing further general notation concerning random walks on combs that will be useful throughout the remainder of the article. 

We let $T_{x, k}$ denote the first time at which the random walk on $G = \mathrm{Comb}(\tilde{G},f_\gamma)$ (with $\gamma > 1$) reaches a horizontal distance of at least $k \in \mathbb{N}$ from the base point of $x = (x_1, 0)\in V$. That is, recalling \eqref{eqn:BallsOnComb}, we let
\begin{equation}\label{eqn:ExitHorBalls}
    T_{x, k} \coloneqq T_{D(x_1,k)} \text{ }(= \inf\{n \ge 0 \colon X_n \in D(x_1, k)^c\}).
\end{equation}
It will be convenient to write $X_n = (\tilde{Z}_n,U_n)$, i.e.~to denote by $(\tilde{Z}_n)_{n \geq 0}$ the projection of the walk $X_n \in V$ to the point in $\tilde{V}$ closest to it, and by $(U_n)_{n \geq 0}$ the ``height'' of the walk on the comb. We define the times $(\zeta_k)_{k \geq 0}$ at which successive horizontal steps are taken by the walk $X$. Formally, we let
\begin{equation}
\begin{split}\label{eqn:DefHorizontalSteps}
    \zeta_0 & \coloneqq 0, \qquad \zeta_1 \coloneqq \inf\{n \geq 1 \, : \, \tilde{Z}_n \neq \tilde{Z}_0 \}, \\ 
    \zeta_{n+1} & \coloneqq \zeta_n + \zeta_1 \circ \theta_{\zeta_n}, \qquad n \geq 1,
    \end{split}
\end{equation}
and set
\begin{equation}\label{eqn:HorizontalSteps}
\begin{split}
    L_{x, k} & \coloneqq \sup\{n \geq 0 \, : \, \zeta_n < T_{x,k} \} \\
    & ( = 
    \text{the number of horizontal steps taken before } T_{x, k}).
    \end{split}
\end{equation}
Informally, by ``horizontal step'' we mean a step of $X$ of the form $(x, 0) \to (y, 0)$ with $x \sim y$ in $\tilde{G}$. Furthermore we set, for any $k \ge 0$, 
\begin{equation}
\label{eq:Horizontal-Walk}
    \tilde{X}_k \coloneqq \tilde{Z}_{\zeta_k}.
\end{equation} 
Heuristically, one may view $\{\tilde{X}_k\}_{k \geq 0}$ simply as the walk $\{\tilde{Z}_k\}_{k \ge 0}$ considered at the (random) times at which the latter jumps. Furthermore, we introduce, for $z \in \tilde{V}$,
    \begin{equation}\label{eqn:Tau}
    \begin{split}
        \tau_z & \coloneqq \zeta_1 \circ \theta_{H_{(z,0)}}  \\
        &= 1 +(\text{time spent in the tooth based at }z \text{ between two consecutive horizontal steps}).
    \end{split}
    \end{equation}
Finally, we set
\begin{equation}\label{eqn:DefVi}
    V_i \coloneqq \tau_{\tilde{X}_i} \circ \theta_{\zeta_i} = \zeta_{i+1} - \zeta_i, \qquad i \in \mathbb{N}.
\end{equation}
To simplify notation, we will abbreviate by $q_t(x, y) \coloneqq p^{G}_t(x,y)$ the heat kernel on the comb graph, for $t \in \mathbb{N}, x,y \in V$.

\subsection{Proof of the finite collision property}

\begin{lem}\cite[Lemma~3.7]{BarlowCoulhonKumagai}\label{lemma:HorHittingTime}
    There exist two positive constants $C, c>0$ such that, for all $x \in V$, $k \ge 1$ and $t>0$, we have
    \begin{equation*}
        \mathbb{P}_{x}\left(L_{x, k} < t\right) \le C \exp\left( -c \left(\frac{k^\beta}{t}\right)^{\frac{1}{\beta - 1}}  \right).
    \end{equation*}
\end{lem}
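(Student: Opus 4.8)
\textbf{The plan} is to reduce the assertion to the corresponding exit-time estimate on the base graph $\tilde{G}$, which is precisely \cite[Lemma~3.7]{BarlowCoulhonKumagai} and is available because $\tilde{G}$ satisfies \eqref{eqn:VolGrowth} and \eqref{eqn:ResGrowth} (and hence \eqref{eqn:ExitCondition} together with the sub-Gaussian bounds \eqref{eqn:HeatKerUB}--\eqref{eqn:HeatKerLB}). The key observation is that the horizontal walk $(\tilde{X}_i)_{i \ge 0}$ from \eqref{eq:Horizontal-Walk}, started from the base point $x_1$ of $x$, is a simple random walk on $\tilde{G}$, and that $L_{x, k}$ is (up to an integer-part shift) the exit time of $\tilde{X}$ from $B_{\tilde{G}}(x_1, k)$.

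First I would check that $(\tilde{X}_i)_{i \ge 0}$ is a simple random walk on $\tilde{G}$ started from $x_1$. Whenever $X$ sits at a base vertex $(z, 0)$, it either moves to a uniformly chosen $\tilde{G}$-neighbour of $z$, or it enters the (finite) tooth at $z$, performs an excursion there, and returns to $(z, 0)$ almost surely since finite segments are recurrent; consequently, conditionally on the next horizontal step being taken, the new base point is uniform among the $\tilde{G}$-neighbours of $z$. Hence $\tilde{X}_{i+1}$ given $\tilde{X}_i$ is uniform on the $\tilde{G}$-neighbours of $\tilde{X}_i$, and since $\tilde{X}_0 = \tilde{Z}_0 = x_1$ irrespective of the initial height of $x$, the claim follows for every $x \in V$ (not only base vertices).

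Next I would make the identification precise. Writing $\sigma \coloneqq \inf\{i \ge 0 \colon \tilde{X}_i \notin B_{\tilde{G}}(x_1, k)\}$ for the exit time of the horizontal walk, one has $T_{x, k} = \zeta_\sigma$, since the base point of $X$ leaves $B_{\tilde{G}}(x_1, k)$ exactly at the $\sigma$-th horizontal step; as $(\zeta_i)_{i \ge 0}$ is strictly increasing, this gives $L_{x, k} = \sigma - 1$, and therefore $\{L_{x, k} < t\} = \{\sigma \le \lceil t \rceil\}$ for every $t > 0$, so that
\[
\mathbb{P}_x(L_{x, k} < t) = \mathbb{P}^{\tilde{G}}_{x_1}\bigl(T_{B_{\tilde{G}}(x_1, k)} \le \lceil t \rceil\bigr).
\]
If $t \le k$, the right-hand side vanishes, because $d_{\tilde{G}}(\tilde{X}_i, x_1) \le i \le k$ for $i \le k$ forces $\sigma \ge k + 1 > \lceil t \rceil$, so the claimed bound is trivial in that range. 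If $t > k \ge 1$, then $\lceil t \rceil \le 2t$, and applying \cite[Lemma~3.7]{BarlowCoulhonKumagai} to the base graph $\tilde{G}$ with $s = \lceil t \rceil$ gives
\[
\mathbb{P}^{\tilde{G}}_{x_1}\bigl(T_{B_{\tilde{G}}(x_1, k)} \le \lceil t \rceil\bigr) \le C \exp\!\left(-c\Bigl(\tfrac{k^\beta}{2t}\Bigr)^{\frac{1}{\beta - 1}}\right);
\]
absorbing the harmless factor $2^{-1/(\beta - 1)}$ into $c$ yields the statement.

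I do not expect a genuine obstacle here, since the quantitative content is imported directly from \cite{BarlowCoulhonKumagai}; the only points requiring (minor) care are the verification that the time-changed horizontal process is honestly a simple random walk on $\tilde{G}$ and the integer-part/small-$t$ bookkeeping in the reduction above.
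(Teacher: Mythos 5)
Your argument is correct and takes essentially the paper's own route: the key identification of $L_{x,k}$ in law with the exit time of a simple random walk on $\tilde{G}$ from $B_{\tilde{G}}(x_1,k)$ is precisely how the paper opens its proof, and you handle the verification that the time-changed horizontal process is a genuine SRW on $\tilde{G}$, as well as the small-$t$ and integer-part bookkeeping, with the appropriate care. The only difference is cosmetic: you cite \cite[Lemma~3.7]{BarlowCoulhonKumagai} directly for the resulting exit-time bound on the base graph, while the paper re-derives it in the appendix ``for completeness and for the benefit of the reader'' via an iterated stopping-time decomposition and the Chernoff-type bound \cite[Lemma~A.8]{Barbook}; since the lemma header already cites the same reference, importing it directly is equally valid.
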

For completeness and for the benefit of the reader, the proof of this lemma is given in Appendix~\ref{appendix}.

The first consequence of this estimate is the following result on $G$. To ensure that all estimates below are at a correct scale, we introduce the ``effective parameter''
\begin{equation}\label{eqn:Gammaprime}
    \gamma' \coloneqq \gamma \wedge \beta.
\end{equation}
We observe that the regime $\beta- \alpha < \gamma \le \beta$ is the hardest to analyze and, for this regime, one has $\gamma' = \gamma$.
\begin{lem}\label{lemma:ExitCombLDP}
    On the graph $G$ we have that for all $x = (x_1, 0) \in V$, $k \ge 1$ and $t>0$
    \begin{equation*}
        \mathbb{P}_{x}\left(T_{x, k} < t \right) \le C \exp\left( -c \left(\frac{k^{\beta+\gamma'}}{t} \right)^{c(\beta)} \right),
    \end{equation*}
    where $c(\beta)>0$ is a constant depending only on $\beta$.
\end{lem}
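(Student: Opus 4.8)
\textbf{Proof strategy for Lemma~\ref{lemma:ExitCombLDP}.}

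The guiding idea is that $T_{x,k}$ is forced to be large by two compounding effects. On the base graph $\tilde G$ the horizontal walk needs of order $k^{\beta}$ steps to cross a $d_{\tilde G}$-ball of radius $k$, which is exactly what Lemma~\ref{lemma:HorHittingTime} quantifies; and a macroscopic fraction of these horizontal steps are taken from base vertices whose teeth have length of order $k^{\gamma'}$, each such step being preceded by a vertical excursion into that tooth. Since $T_{x,k}>\zeta_{L_{x,k}}=\sum_{i<L_{x,k}}V_i$ by~\eqref{eqn:DefHorizontalSteps}--\eqref{eqn:DefVi}, the two effects multiply into the scale $k^{\beta}\cdot k^{\gamma'}=k^{\beta+\gamma'}$ of the statement; when $\gamma>\beta$ the relevant tooth length saturates at $k^{\beta}=k^{\gamma'}$, because on the largest vertical time scale one can afford (namely $k^{\beta}$) excursions into teeth longer than $k^{\beta}$ are of no further use. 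Throughout we take $c(\beta)\coloneqq\tfrac{1}{2\beta-1}$, and we may assume both that $t>k+1$ (otherwise $T_{x,k}\ge k+1>t$, since leaving $D(x_1,k)$ needs at least $k+1$ horizontal steps) and that $k^{\beta+\gamma'}/t$ exceeds a large absolute constant (otherwise the bound is trivial after enlarging $C$).

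\emph{Step 1 (many ``tall'' excursions are forced).} Call $z\in\tilde V$ \emph{tall} if $d_{\tilde G}(o,z)\ge k/16$, so that (for $k$ large) its tooth has length $f_\gamma(z)\gtrsim k^{\gamma'}$, and let $N$ be the number of horizontal steps taken strictly before $T_{x,k}$ from tall base vertices. The first task is the bound
\[
\mathbb{P}_x\big(N<n\big)\ \le\ C\exp\!\big(-c\,(k^{\beta}/n)^{1/(\beta-1)}\big),\qquad n\ge 1 .
\]
The subtlety is that the profile is anchored at the \emph{fixed} vertex $o$, not at $x_1$, so one cannot simply invoke ``vertices near $x_1$ have long teeth''. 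One resolves this by a dichotomy: if $d_{\tilde G}(o,x_1)>k/8$, every vertex of $B_{\tilde G}(x_1,k/16)$ is at distance $>k/16$ from $o$, hence tall, and it lies in the base of $D(x_1,k)$, so $N\ge L_{x_1,k/16}$. If $d_{\tilde G}(o,x_1)\le k/8$, let $H$ be the first time the base walk reaches $d_{\tilde G}$-distance $\lceil 7k/8\rceil$ from $x_1$ (finite, since the walk eventually exits $D(x_1,k)$ and must cross this sphere); at time $H$ the walk sits at $(w,0)$ with $d_{\tilde G}(o,w)\ge 3k/4$, and $B_{\tilde G}(w,k/16)$ again consists of tall vertices inside the base of $D(x_1,k)$, so that $N\ge L_{w,k/16}\circ\theta_H$. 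In either case the displayed inequality follows from Lemma~\ref{lemma:HorHittingTime} applied with radius $k/16$ (together with the strong Markov property at $H$ in the second case), its uniformity in the starting vertex being exactly what is needed here.

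\emph{Step 2 (heavy-tailed lower deviation of the accumulated vertical time).} On $\{N\ge n\}$ one has $T_{x,k}\ge\sum_{j=1}^{n}\widehat V_j$, where $\widehat V_1,\widehat V_2,\dots$ are the successive vertical excursion lengths at tall base vertices (each associated tooth of length $\gtrsim k^{\gamma'}$). Conditionally on the past, $\widehat V_j$ has the law of $\zeta_1$ under $\mathbb{P}^{G}_{(z,0)}$ for the relevant tall $z$; a one-dimensional computation on the tooth over $z$ gives $\mathbb{P}(\widehat V_j\ge a\mid\mathcal F)\asymp a^{-1/2}$ for $1\le a\lesssim k^{2\gamma'}$ (the reflecting tip of the tooth is not felt on this time scale), so $\mathbb{E}[\widehat V_j\wedge M\mid\mathcal F]\asymp\sqrt M$ with conditional second moment $O(M^{3/2})$ for $1\le M\lesssim k^{2\gamma'}$. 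Truncating at level $M$ and applying a Freedman-type inequality to the martingale $\sum_{j\le n}(\widehat V_j\wedge M-\mathbb{E}[\widehat V_j\wedge M\mid\mathcal F])$ yields a constant $c_\ast>0$ with
\[
\mathbb{P}_x\big(T_{x,k}<c_\ast\,n\sqrt M,\ N\ge n\big)\ \le\ \exp\!\big(-c_\ast\,n/\sqrt M\big),\qquad 1\le M\lesssim k^{2\gamma'} .
\]
The decisive point is that for a \emph{lower} bound on a sum of heavy-tailed (roughly $\tfrac12$-stable) nonnegative variables one truncates at a \emph{low} level $M$ rather than comparing with the mean; this produces the exponent $n/\sqrt M$ rather than a useless one, whereas a bare second-moment estimate, whose variance is dominated by rare long excursions one need not avoid, would give a bound of the wrong shape.

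\emph{Step 3 (optimization).} Choosing $M=(t/(c_\ast n))^{2}$---admissible, i.e.\ $M\lesssim k^{2\gamma'}$, as soon as $n\gtrsim t/k^{\gamma'}$---makes $c_\ast n\sqrt M=t$, so Steps 1 and 2 combine to
\[
\mathbb{P}_x(T_{x,k}<t)\ \le\ C\exp\!\big(-c\,(k^{\beta}/n)^{1/(\beta-1)}\big)+\exp\!\big(-c\,n^{2}/t\big)
\]
for every such $n$. It remains to choose $n$: for $k\lesssim t\lesssim k^{\,1+\gamma'(2\beta-1)/\beta}$ the choice $n\asymp(k^{\beta}t^{\beta-1})^{1/(2\beta-1)}$ is admissible and balances the two exponents at $(k^{2\beta}/t)^{1/(2\beta-1)}\ge(k^{\beta+\gamma'}/t)^{1/(2\beta-1)}$; for larger $t$ one takes $n\asymp t/k^{\gamma'}$ (the smallest admissible value) and checks, using only $\gamma'\le\beta$, that both exponents still dominate $(k^{\beta+\gamma'}/t)^{1/(2\beta-1)}$; and for $k+1<t\lesssim k$ one simply uses $\mathbb{P}_x(T_{x,k}<t)\le\mathbb{P}_x(L_{x,k}<t)$ with Lemma~\ref{lemma:HorHittingTime} (here $k^{\beta+\gamma'}/t\lesssim k^{2\beta-1}$). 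This gives the lemma with $c(\beta)=\tfrac1{2\beta-1}$. I expect the two substantive points to be Step 1---ensuring, on a base graph with no geodesic regularity and an off-center profile, that a full power $k^{\beta}$ of the horizontal steps land on teeth of length $\gtrsim k^{\gamma'}$---and the heavy-tailed concentration of Step 2; everything else is bookkeeping with constants.
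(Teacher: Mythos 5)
Your proposal is correct, and its overall skeleton matches the paper's: first use Lemma~\ref{lemma:HorHittingTime} (i.e.\ the exit-time estimate for the base walk) to show that, off a set of probability $\exp(-c(k^\beta/n)^{1/(\beta-1)})$, at least $n$ of the horizontal steps are taken from base vertices whose teeth have length $\gtrsim k^{\gamma'}$; then show that the accumulated vertical time at those vertices concentrates, yielding $T_{x,k}\gtrsim n\,k^{\gamma'}$ up to small probability; finally optimize over $n$. The paper handles the off-center profile by the same WLOG move you make explicit with your $d_{\tilde G}(o,x_1)\lessgtr k/8$ dichotomy (they shift to a far-out starting point via the strong Markov property at $T_{x,2k/3}$ and then consider $L_{x,k/3}$), so Step~1 is essentially the same idea written more carefully. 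Where you genuinely diverge from the paper is Step~2. The paper fixes a single threshold $\nu^2k^{2\gamma'}$, shows that each vertical excursion $V_i$ exceeds it with probability $\gtrsim 1/(\nu k^{\gamma'})$ (via a Gambler's-ruin plus diffusive lower bound on the event $\tilde E(y_1,\nu,k)$ of~\eqref{eqn:EventBinomialEnhanced}), stochastically dominates the count of such exceedances from below by a Binomial, and applies a Chernoff bound; the contribution of each exceedance to $T_{x,k}$ is then bounded below by $\nu^2k^{2\gamma'}$. You instead truncate every excursion at a sliding level $M$, compute the first two conditional moments of $\widehat V_j\wedge M$ from the $\tfrac12$-stable tail $\mathbb{P}(\widehat V_j\ge a)\asymp a^{-1/2}$ (valid for $a\lesssim k^{2\gamma'}$), and invoke Freedman's inequality for the resulting martingale of truncated increments; the optimization over $M$ then plays the role of the paper's optimization over $\nu$. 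Both routes are sound and both exploit the key decoupling that, conditionally on the base path, the vertical excursion lengths are independent with a law depending only on the current base vertex. Your version produces the cleaner exponent $c(\beta)=1/(2\beta-1)$, which for $\beta>2$ is strictly larger than the paper's $\min\{1/(3(\beta-1)^2),\,(3(\beta-1)-2)/(3(\beta-1))\}$, but since the lemma only asserts $c(\beta)>0$ this is immaterial. Two small remarks: the range ``$k+1<t\lesssim k$'' in Step~3 is a typo (it is empty; you presumably mean a small window of $t$ of order $k$ where the balanced $n$ is not admissible, and the fix you give via $\mathbb{P}_x(T_{x,k}<t)\le\mathbb{P}_x(L_{x,k}<t)$ is correct there); and you should record that the truncation level must also satisfy $M\ge 1$, which is automatic in the regime where the bound is nontrivial.
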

\begin{proof}
    Without loss of generality we assume that $x$ is at a horizontal distance larger than $2k/3$ from $o_V$. Indeed, note that otherwise one can apply the strong Markov property at $T_{x, 2k/3}$ and consider the ball $D(y, k/3)$ where $y = X_{T_{x, 2k/3}}$.
   
   We first use that, for any $x = (x_1,0)$ and $\lambda > 0$, one has
   \begin{equation*}
   \begin{split}
       \mathbb{P}_{x}\left(T_{x, k} < t \right) &\le \mathbb{P}_{x}\left(T_{x, k/3} < t \right) \\
       & = \mathbb{P}_{x}\left(T_{x, k/3} < t, L_{x, k/3}< k^\beta/\lambda \right) + \mathbb{P}_{x}\left(T_{x, k/3} < t, L_{x, k/3} \ge k^\beta/\lambda \right).
       \end{split}
   \end{equation*}
   For the first summand we will use Lemma~\ref{lemma:HorHittingTime} and a proper choice of $\lambda$. Let us focus on the second one for now. We define, for a given $y = (y_1, 0)$, $y_1 \in \tilde{V}$ with $d_{\tilde{G}}(o,y_1) > \frac{k}{3}$ and $\nu>0$, the event 
   \begin{equation}\label{eqn:EventBinomial}
   \begin{split}
       &E(y_1, \nu, k) \coloneqq \left\{ \inf\{n \ge 1 \colon \tilde{Z}_n \neq \tilde{Z}_0 = y_1\} \ge \nu^2 k^{2\gamma'} \right\}\\
       &  \ =\{X \text{ takes at least } \nu^2 k^{2\gamma'} \text{ steps in the }y_1 \text{-tooth before the next horizontal step} \}.
   \end{split}
   \end{equation}
   Moreover, we introduce
   \begin{equation}\label{eqn:EventBinomialEnhanced}
   \begin{split}
       \tilde{E}(y_1, \nu, k)  \coloneqq  & \, \{H_{(y_1, \nu k^{\gamma'})} \le \inf\{n \ge 1 \colon \tilde{Z}_n \neq \tilde{Z}_0 = y_1\}\} \\
        &  \cap \{T_{B_G((y_1, \nu k^{\gamma'}), \nu k^{\gamma'})} \circ \theta_{H_{(y_1, \nu k^{\gamma'})}} \ge \nu^2 k^{2\gamma'}\}.
   \end{split}
   \end{equation}
   It is straightforward to see that $E(y_1, \nu, k) \supseteq \tilde{E}(y_1, \nu, k)$. Using standard Gambler's ruin and diffusive estimates for a random walk on a segment of $\mathbb{Z}$ (we remark that a segment of height $k^{\gamma'}$ can always be embedded in one of height $k^\gamma$) we see that, for all $y_1$ as above, if $\nu\le 1/12$, then
   \begin{equation}\label{eqn:ProbBinom}
       \mathbb{P}_{y}\left(\tilde{E}(y_1, \nu, k)\right) \geq \frac{c_1}{\nu k^{\gamma'}}.
   \end{equation}
   Recall the definition \eqref{eqn:DefVi} of $\{V_i\}_{i \ge 0}$, the time spent on the tooth corresponding to the $i$-th vertical excursion, then on the event $\{L_{x,k/3}\geq k^{\beta}/\lambda\}$ we have, $\mathbb{P}_x$-a.s.,
   \[T_{x,k/3}\geq \sum_{i=1}^{L_{x,k/3}}V_i\geq \sum_{i=1}^{k^{\beta}/\lambda}V_i\geq \sum_{i=1}^{k^{\beta}/\lambda}V_i \mathds{1}\{V_i\geq \nu^2 k^{2\gamma'}\}\geq \nu^2 k^{2\gamma'}\sum_{i=1}^{k^{\beta}/\lambda}\mathds{1}\{V_i\geq \nu^2 k^{2\gamma'}\},\]
   whence 
   \begin{align*}
       \mathbb{P}_{x}(T_{x,k/3}<t,L_{x,k/3}\geq k^{\beta}/\lambda)&\leq \mathbb{P}_{x}\Big(\nu^2 k^{2\gamma'}\sum_{i=1}^{k^{\beta}/\lambda}\mathds{1}\{V_i\geq \nu^2 k^{2\gamma'}\}<t\Big)\\
       &=\mathbb{P}_{x}\Big(\sum_{i=1}^{k^{\beta}/\lambda}\mathds{1}\{V_i\geq \nu^2 k^{2\gamma'}\}<\frac{t}{\nu^2 k^{2\gamma'}}\Big).
   \end{align*}
   As we observed below \eqref{eqn:EventBinomialEnhanced}, for each $i = 0, \dots, L_{x, k}$, $\{V_i\geq \nu^2 k^{2\gamma'}\} \supseteq \tilde{E}(\tilde{Z}_i, \nu, k)$. Moreover, the events $\{\tilde{E}(\tilde{X}_{i}, \nu, k)\}_{i \le k^\beta / \lambda}$, are independent under $\mathbb{P}_x(\, \cdot \,  |L_{x, k/3} \geq k^\beta/\lambda)$ by the strong Markov property and the fact that they are independent of the specific height to the tooth considered. Since, on the event $\{L_{x,k/3}\geq k^{\beta}/\lambda\}$, all $V_i, \, 0 \le i \le k^{\beta}/\lambda$ happen in teeth of height larger than $(k/3)^{\gamma'}$, their individual probabilities are bounded below by \eqref{eqn:ProbBinom}, with this observation we obtain
   \begin{equation*}
       \mathbb{P}_{x}\left(T_{x, k/3} < t, L_{x, k/3} \ge k^\beta/\lambda \right) \le \overline{\mathbb{P}}\left(\mathrm{Bin}\left(\frac{k^\beta}{\lambda}, \frac{c_1}{\nu k^{\gamma'}}\right) \le \frac{t}{\nu^2 k^{2\gamma'}} \right)
   \end{equation*}
   (here and in the following, we write $\mathrm{Bin}(M,q)$ for a generic binomially distributed random variable with parameters $\lfloor M \rfloor $ and $q \in [0,1]$, on some auxiliary probability space $(\overline{\Omega},\mathcal{G},\overline{\mathbb{P}})$). 
   We now fix all the parameters of interest: 
   \begin{itemize}
       \item We introduce the quantity $\mu = \frac{t}{k^{\beta+\gamma'}}$.
       \item We fix $\lambda = \mu^{-\frac{1}{3(\beta - 1)}}$.
       \item We fix $\nu = 2 \mu \lambda /c_1 = 2 \mu^{1 - \frac{1}{3(\beta-1)}}/c_1$.
   \end{itemize}
   Before proceeding we make the observation that if $\mu^{1 - \frac{1}{3(\beta-1)}} >  c_1/24$ then one can obtain the result by adjusting the constants in the bound. We also observe that restricting to $\mu^{1 - \frac{1}{3(\beta-1)}} \le  c_1/24$ yields $\nu \le 1/12$ which we required to have enough space in the tooth for the diffusive lower bound to hold. \smallskip

   We also note that, shortening the notation to $\mathrm{Bin}\left(\frac{k^\beta}{\lambda}, \frac{c_1}{\nu k^{\gamma'}}\right) = \mathrm{Bin}$,
   \begin{equation*}
        \frac{t}{\nu^2 k^{2\gamma'}} \frac{1}{\mathbb{E}[\mathrm{Bin}]} = \frac{t \lambda \nu k^{\gamma'}}{\nu^2 k^{2\gamma'} k^{\beta} c_1} = \frac{\lambda t}{ \nu k^{\beta + \gamma'}c_1} = 2 \mu \frac{1}{\mu}.
   \end{equation*}
   Hence, by applying a standard Chernoff bound for binomial random variables we get 
   \begin{align*}
       \mathbb{P}_{x}\left(T_{x, k/3} < t, L_{x, k/3} \ge k^\beta/\lambda \right) \le \exp\left( - c \frac{k^{\beta - \gamma'}}{\lambda \nu}\right) &\le \exp\left( - c \frac{k^{\beta - \gamma'}}{\mu^{-\frac{1}{3(\beta - 1)}+ 1 - \frac{1}{3(\beta-1)}} }\right) \\
       &\le \exp\left( - c \left(\frac{k^{\beta+\gamma'}}{t}\right)^{\frac{3(\beta-1) - 2}{3(\beta-1)}}\right).
   \end{align*}
   Finally, we obtain by Lemma~\ref{lemma:HorHittingTime}
   \begin{equation*}
       \mathbb{P}_{x}\left(L_{x, k/3}< k^\beta/\lambda \right) \le \exp\left( - c\lambda^{\frac{1}{\beta-1}} \right) \le \exp\left( - c \left(\frac{k^{\beta+\gamma'}}{t}\right)^{\frac{1}{3(\beta-1)^2}, } \right).
   \end{equation*}
   We then can take $c(\beta) = \min\{\frac{1}{3(\beta-1)^2}, \frac{3(\beta-1) - 2}{3(\beta-1)} \} > 0$ as $\beta \ge 2$.
\end{proof}
We use this bound to deduce the following result. 

\begin{lem}\label{lemma:HeathonSkeleton}
    For all $t>0$ and all $x = (x_1,0) \in V$ we have
    \begin{equation*}
        q_t(x, x) \le \frac{c}{t^{\frac{\alpha+\gamma'}{\beta+\gamma'}}}.
    \end{equation*}
\end{lem}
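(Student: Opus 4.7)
The plan is to deduce the stated heat kernel bound from the general inequality~\eqref{eq:Bound-HK-via-Green}, combined with the exit-time estimate of Lemma~\ref{lemma:ExitCombLDP} and an effective-resistance bound for $g_{D(x_1,k)}(x,x)$, applied with a carefully chosen radius $k = k(t)$. For $t$ below an absolute constant the inequality is trivial (since $q_t(x,x) \leq 1$), so I will restrict attention to $t$ large.

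First I would bound the killed Green function at the base vertex $x = (x_1,0)$. Since every tooth rooted at a vertex $v \in B_{\tilde{G}}(x_1,k)$ lies entirely inside $D(x_1,k)$, the boundary of $D(x_1,k)$ is located at the base level. Hence, by the same network-reduction argument as in the derivation of~\eqref{eq:Eff-res-equality} in the proof of part~(1), one obtains
\begin{equation*}
g_{D(x_1,k)}(x,x) \;=\; R^G_{\mathrm{eff}}(x, D(x_1,k)^c) \;=\; R^{\tilde G}_{\mathrm{eff}}(x_1, B_{\tilde G}(x_1,k)^c).
\end{equation*}
The symmetric monotonicity of the effective resistance (with respect to shrinking the target set), together with the upper bound in~\eqref{eqn:ResGrowth} applied to any single vertex $y$ with $d_{\tilde G}(x_1,y) = k+1$, then gives $g_{D(x_1,k)}(x,x) \leq C k^{\beta-\alpha}$, uniformly in $x_1 \in \tilde{V}$ and $k \geq 1$.

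Next I would choose $k = \lceil (Mt)^{1/(\beta+\gamma')} \rceil$ with a sufficiently large constant $M$ to be fixed. By Lemma~\ref{lemma:ExitCombLDP},
\begin{equation*}
\mathbb{P}_x(T_{x,k} < t) \;\leq\; C \exp(-c\,M^{c(\beta)}),
\end{equation*}
which, for $M$ chosen large enough, is bounded by $\tfrac{1}{2}$, so that $\mathbb{P}_x(T_{D(x_1,k)} \geq t) \geq \tfrac{1}{2}$. Inserting these two ingredients into~\eqref{eq:Bound-HK-via-Green} with $n = t$ and $B = D(x_1,k)$ yields
\begin{equation*}
q_t(x,x) \;\leq\; \frac{2\, g_{D(x_1,k)}(x,x)}{t\, \mathbb{P}_x(T_{D(x_1,k)} \geq t)} \;\leq\; \frac{4C\,k^{\beta-\alpha}}{t} \;\leq\; \frac{C'}{t^{1-(\beta-\alpha)/(\beta+\gamma')}} \;=\; \frac{C'}{t^{(\alpha+\gamma')/(\beta+\gamma')}},
\end{equation*}
using the identity $1 - \frac{\beta-\alpha}{\beta+\gamma'} = \frac{\alpha+\gamma'}{\beta+\gamma'}$ in the last equality.

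The delicate part is the matching of space and time scales through $k^{\beta+\gamma'} \asymp t$, which is exactly the exponent dictated by Lemma~\ref{lemma:ExitCombLDP}; any other scaling would either make the exit probability too small or blow up the effective resistance term. The Green function bound itself is essentially a consequence of the fact that effective resistances seen from the base of the comb reduce to effective resistances on $\tilde{G}$, so no additional obstacle arises there.
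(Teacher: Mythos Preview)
Your proof is correct and follows essentially the same approach as the paper: both choose $k \asymp t^{1/(\beta+\gamma')}$, use Lemma~\ref{lemma:ExitCombLDP} to ensure $\mathbb{P}_x(T_{x,k} \geq t) \geq \tfrac12$, bound $g_{D(x_1,k)}(x,x) = R^G_{\mathrm{eff}}(x,D(x_1,k)^c) \leq C k^{\beta-\alpha}$ via~\eqref{eqn:ResGrowth} and monotonicity, and then insert into~\eqref{eq:Bound-HK-via-Green}. Your justification of the effective-resistance identity is slightly more detailed than the paper's, but the argument is the same.
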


\begin{proof}
    Let $k\coloneqq b t^{1/(\beta+\gamma')}$, with $b>0$ to be fixed later. By~\eqref{eqn:ResGrowth} and monotonicity of the effective resistance, we have the upper bound
    \begin{equation*}
        R^{G}_{\mathrm{eff}}(x, D(x_1, k)^c) \le C k^{\beta - \alpha}.
    \end{equation*}
    Then, by the previous lemma we have that
    \begin{equation*}
        \mathbb{P}_{x}\left(T_{x, k} < t \right) \le C \exp\left( -c b^{(\beta+\gamma')c(\beta)} \right).
    \end{equation*}
    Note that the quantity on the right-hand side of the previous display converges to $0$ as $b \to \infty$, hence we can choose $b$ large enough so that it is less than $1/2$. Then applying the formula~\eqref{eq:Bound-HK-via-Green} we find
    \begin{equation*}
        q_t(x, x) \le \frac{2g_{D(x_1,k)}(x, x)}{t \mathbb{P}_{x}(T_{x, k} \geq t)} \le c t^{-1} R^G_{\mathrm{eff}}(x, D(x_1, k)^c) \le c t^{-1}k^{\beta - \alpha} \le c t^{-1}t^{(\beta-\alpha)/(\beta+\gamma')} \le c t^{-\frac{\alpha+\gamma'}{\beta+\gamma'}},
    \end{equation*}
    giving us the desired bound.
\end{proof}

\begin{lem}\label{lemma:HeathonSkeleton2}
    For all $t>0$ and all $x = (x_1,0) \in V$ we have
    \begin{equation*}
        q_t(o_V, x) \le \frac{c}{t^{\frac{\alpha+\gamma'}{\beta+\gamma'}}}.
    \end{equation*}
\end{lem}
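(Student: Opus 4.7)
The result will follow from the on-diagonal bound in Lemma~\ref{lemma:HeathonSkeleton} via a standard Cauchy--Schwarz off-diagonal upgrade. First, split $t = s + s'$ with $s = \lfloor t/2 \rfloor$ and $s' = \lceil t/2 \rceil$. Using that $q_s(o_V,y)\deg(y) = \mathbb{P}_{o_V}(X_s = y)$, the Chapman--Kolmogorov equation gives
\[q_t(o_V, x) \;=\; \sum_{y \in V} q_s(o_V, y)\,\deg(y)\,q_{s'}(y, x).\]
Applying Cauchy--Schwarz with respect to the reversing measure $y \mapsto \deg(y)$ and then the identity $\sum_y q_r(z,y)^2 \deg(y) = q_{2r}(z,z)$ (which follows from symmetry of $q_r$ in its arguments together with the semigroup property), one obtains
\[q_t(o_V, x) \;\leq\; \sqrt{q_{2s}(o_V, o_V)}\,\sqrt{q_{2s'}(x, x)}.\]

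The decisive observation is that both endpoints $o_V = (o,0)$ and $x = (x_1,0)$ lie on the base (height $0$) of the comb, so Lemma~\ref{lemma:HeathonSkeleton} applies directly to each factor and yields $q_{2s}(o_V,o_V), q_{2s'}(x,x) \leq c/t^{(\alpha+\gamma')/(\beta+\gamma')}$, provided $t$ is at least some fixed constant (so that $2s,2s' \geq t/2$); the remaining small-$t$ range is absorbed into the constant using the trivial estimate $q_t(o_V,x)\deg(x) \leq 1$. Multiplying the two diagonal bounds under the square root yields the desired estimate.

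I do not anticipate any genuine difficulty: the substantive input is the diagonal bound of Lemma~\ref{lemma:HeathonSkeleton}, and the reduction from off-diagonal to on-diagonal behaviour is entirely standard. It is worth noting that the argument crucially exploits the fact that \emph{both} endpoints sit at height zero on the comb, since the on-diagonal bound we invoke is currently available only there; an analogous off-diagonal bound for arbitrary $x = (x_1, x_2)$ with $x_2 > 0$ would require either a separate diagonal estimate inside the teeth or an additional step controlling the hitting time of the base.
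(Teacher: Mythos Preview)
Your proposal is correct and matches the paper's approach: both reduce the off-diagonal bound to the on-diagonal Lemma~\ref{lemma:HeathonSkeleton} via the Cauchy--Schwarz inequality $q_t(o_V,x)\le \sqrt{q_t(o_V,o_V)\,q_t(x,x)}$, using that both endpoints lie on the base. The only cosmetic difference is that the paper states this inequality in one line (and dispatches small $t$ by noting $q_t(o_V,x)=0$ when $t<d_{\tilde G}(o,x_1)$), whereas you spell out the Chapman--Kolmogorov/Cauchy--Schwarz derivation and absorb small $t$ into the constant.
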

\begin{proof}
    Note that for all $t \in [1, k-1] \cap \mathbb{N}$ we trivially have that $q_t(o_V, u) = 0$. On the other hand, using the Cauchy-Schwarz inequality we obtain
    \begin{equation*}
        q_t(o_V, x) \le \sqrt{q_{t}(o_V, o_V)  q_{t}(x, x)} \le \frac{c}{t^{\frac{\alpha+\gamma'}{\beta+\gamma'}}}.
    \end{equation*}
    The second inequality follows directly from Lemma~\ref{lemma:HeathonSkeleton}.
\end{proof}
We finally treat points on the teeth of the comb.
\begin{lem}\label{lemma:HKonFractComb}
    For any $x = (u, h) \in V$, we have
    \begin{equation*}
        q_t(o_V, (u, h)) \le \frac{c}{t^{\frac{\alpha+\gamma'}{\beta+\gamma'}}} e^{-\frac{h^2}{ct}}.
    \end{equation*}
\end{lem}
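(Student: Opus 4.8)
The plan is to reduce the estimate to a one-dimensional computation inside the $u$-tooth, via a decomposition of the trajectory at its \emph{last} visit to the base vertex $(u,0)$, and then to feed the result into the skeleton bound of Lemma~\ref{lemma:HeathonSkeleton2}. One may first assume $u\neq o$ (else there is no tooth at $u$ and $h=0$), $h\le t$ (else $d_G(o_V,(u,h))=d_{\tilde G}(o,u)+h>t$ and the left-hand side is $0$), and $t$ large (for bounded $t$ one has $q_t\le1$, and $h\le t$ keeps $e^{-h^2/(ct)}$ bounded below). Write $L\coloneqq f_\gamma(u)$ and let $\sigma$ be the last time before $t$ with $X_\sigma=(u,0)$; on $\{X_t=(u,h)\}$, $h\ge1$, this is well defined since the tooth can only be entered through its base. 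After $\sigma$, the step at time $\sigma+1$ must be $(u,0)\to(u,1)$ (otherwise $X$ would re-enter the tooth through $(u,0)$, contradicting the maximality of $\sigma$), and thereafter $X$ stays at heights $\ge1$ in the $u$-tooth. By the strong Markov property at $\sigma$,
\begin{equation*}
  q_t(o_V,(u,h))\ \le\ \sum_{s=0}^{t-1} q_s(o_V,(u,0))\,a_{t-s}(h),
  \qquad a_m(h)\coloneqq \mathbb{P}_1\big(W_{m-1}=h,\ W_j\ge1\ \text{for}\ 0\le j\le m-1\big),
\end{equation*}
where $W$ is the simple random walk on $\{0,1,\dots,L\}$ started at $1$, absorbed at $0$ and reflected at $L$ (i.e.\ the height process of $X$ inside the $u$-tooth); note $a_m(h)=0$ for $m<h$.

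The second step is the one-dimensional input. Using the reflection principle / method of images for $W$, together with standard survival (quasi-stationary) estimates for killed walks on an interval, I would establish that for suitable $C,c>0$
\begin{equation*}
  a_m(h)\le \frac{C\,h}{m^{3/2}}\,e^{-h^2/(Cm)}\quad(1\le m\le L^2),
  \qquad
  a_m(h)\le \frac{C}{L^2}\,e^{-cm/L^2}\quad(m>L^2),
\end{equation*}
so that in particular $a_m(h)\le Ce^{-h^2/(Cm)}$ for all $m$, and $\sum_{m\ge1}a_m(h)\le C$ uniformly in $h\ge1$ (the total expected number of visits of $W$ to $h$ before absorption being bounded). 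The $m^{-3/2}$ gain over the naive Gaussian bound, valid in the near-diffusive regime $m\le L^2$ where the reflecting wall at $L$ is essentially not felt, is what will make the subsequent summation converge; for $m>L^2$ one only has exponential decay of the survival probability, which nonetheless suffices.

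The third step is the summation. Insert $q_s(o_V,(u,0))\le c\,s^{-\kappa}$ with $\kappa\coloneqq\frac{\alpha+\gamma'}{\beta+\gamma'}\in(0,1)$ from Lemma~\ref{lemma:HeathonSkeleton2} (the $s=0$ term vanishes since $u\neq o$), set $r=t-s$, and estimate $\sum_{1\le r\le t}(t-r)^{-\kappa}a_r(h)$ by splitting the range at $r=t/2$ and splitting further according to the sizes of $L$ and $h$ relative to $\sqrt t$. The recurring mechanism is: on $\{r\le t/2\}$, bound $(t-r)^{-\kappa}\le(t/2)^{-\kappa}$ and use $\sum_{r\ge1}\frac{h}{r^{3/2}}e^{-h^2/(Cr)}\le C$ (respectively, summability of the $e^{-cr/L^2}$ tail) to obtain a contribution $\lesssim t^{-\kappa}$; on $\{r>t/2\}$, bound $a_r(h)$ by its maximum over that range — which, when $h^2\gtrsim t$, equals the value at $r=t$ of the then-increasing bound $\frac{Ch}{r^{3/2}}e^{-h^2/(Cr)}$, namely $\lesssim t^{-3/2}h\,e^{-h^2/(Ct)}$, and is $\lesssim t^{-1}$ otherwise — and multiply by $\sum_{t/2<r\le t}(t-r)^{-\kappa}\lesssim t^{1-\kappa}$. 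The Gaussian factor survives via $e^{-h^2/(Cr)}\le e^{-h^2/(Ct)}$ (valid since $r\le t$) where needed, and is otherwise harmlessly absorbed into the constant, since in the remaining cases $h^2\le t$ or $h\le L\le\sqrt t$ forces $e^{-h^2/(ct)}=\Theta(1)$. Collecting the (few) cases yields $q_t(o_V,(u,h))\le c\,t^{-(\alpha+\gamma')/(\beta+\gamma')}e^{-h^2/(ct)}$.

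The delicate part is the summation, not the decomposition. The bound $a_m(h)\le Ce^{-h^2/(Cm)}$ alone is \emph{not} enough: since $\sum_{s\le t/2}q_s(o_V,(u,0))\asymp t^{1-\kappa}$ grows with $t$, pulling the Gaussian factor out at scale $t$ leaves a spurious power of $t$. One genuinely needs the sharp one-dimensional estimates — the $m^{-3/2}$ decay (equivalently the uniform bound $\sum_m a_m(h)\le C$) and the exponential $m/L^2$ tail — together with a case-dependent placement of the split point, in order to avoid an extra $\sqrt t$ or $\log t$. The only truly technical sub-step is proving those 1D bounds sharply, in particular for the walk that is simultaneously absorbed at $0$ and reflected at $L=f_\gamma(u)$, which calls for the method of images rather than a one-line appeal to a known heat-kernel bound.
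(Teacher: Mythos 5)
Your proof is correct and, after unwinding the reversibility, is essentially dual to the paper's. You decompose the trajectory from $o_V$ at its \emph{last} visit to $(u,0)$; the paper uses $q_t(o_V,(u,h))=q_t((u,h),o_V)$ and decomposes the trajectory started at $(u,h)$ at its \emph{first} visit to $(u,0)$, which by reversibility of the walk is the same splitting of the path. Both lead to the identical sum $\sum_{s}q_{t-s}(o_V,(u,0))\cdot(\text{1D tooth density at time }s)$, fed with Lemma~\ref{lemma:HeathonSkeleton2} and estimated by splitting the range at $t/2$. What genuinely differs is the one-dimensional input: the paper simply cites the image-method estimate~\eqref{eqn:GeneralBallot} from~\cite[(4.11)]{BPS}, whose second term $(2m-h)s^{-3/2}e^{-(2m-h)^2/(c's)}$ encodes the reflecting wall, and the summation closes by a single application of $xe^{-x^2/(ct)}\le\bar c\sqrt t\,e^{-x^2/(2ct)}$ to both the $h$- and $(2m-h)$-terms. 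You instead derive the 1D bounds from scratch via a diffusive/spectral split at $m=L^2$, using the Green-function identity $\sum_m a_m(h)\le C$. That buys self-containedness, but your claim that the $e^{-cm/L^2}$ spectral tail is ``genuinely needed'' is overstated: a single reflected image, as in BPS, already suffices. One caution on execution: extracting $e^{-h^2/(Cr)}\le e^{-h^2/(Ct)}$ from $\sum_{r\le t/2}\frac{h}{r^{3/2}}e^{-h^2/(Cr)}$ and bounding the residual sum by $O(1)$ loses a factor of $h$; when $h^2\gg t$ one actually needs the substitution $v=h^2/r$ to see this partial sum is $\lesssim\frac{\sqrt t}{h}\,e^{-h^2/(c't)}$, which is then absorbed using $h>\sqrt t$. (The paper is similarly terse about this step.)
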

\begin{proof}
    For $a,b \in \mathbb{N}$, let $T_a^b$ be the exit time of interval $[a, b]$ by a simple random walk on $\mathbb{Z}$. Then from~\cite[(4.11)]{BPS} we have the inequality
    \begin{equation}\label{eqn:GeneralBallot}
        \mathbb{P}^{\mathbb{Z}}_{h} \left(T_0^m = s\right) \le c\frac{h}{s^{3/2}} e^{-h^2/(c's)} +   c\frac{2m-h}{s^{3/2}} e^{-(2m-h)^2/(c's)}, \qquad s \in \mathbb{N}.
    \end{equation}
    Then, by reversibility and setting $k \coloneqq d_{\tilde{G}}(o,u),\delta \coloneqq \frac{\alpha+\gamma'}{\beta+\gamma'}$ we see that
    \begin{equation}
    \label{eq:Application-Ballot-Sec3}
        q_t(o_V, (u, h)) \le \sum_{s= 1}^{t-1} \mathbb{P}^{\mathbb{Z}}_{h}\left(T_0^{k^\gamma} = s\right) q_{t-s}((u, 0), o_V) \le \sum_{s= 1}^{t-1} \mathbb{P}^{\mathbb{Z}}_{h}\left(T_0^{k^\gamma} = s\right) (t-s)^{-\delta},
    \end{equation}
    by Lemma~\ref{lemma:HeathonSkeleton2}. Then,
    \begin{align*}
        q_t((u, h), o_V) &\le hc\left( \sum_{s=1}^{t/2} \frac{1}{s^{3/2}} e^{-h^2/(c's)} t^{-\delta} +  \sum_{s=t/2}^{t-1} \frac{1}{t^{3/2}} e^{-h^2/(c's)} (t-s)^{-\delta} \right)\\
        & + (2k^\gamma - h)c\left( \sum_{s=1}^{t/2} \frac{1}{s^{3/2}} e^{-(2k^\gamma-h)^2/(c's))} t^{-\delta} +   \sum_{s=t/2}^{t-1} \frac{1}{t^{3/2}} e^{-(2k^\gamma-h)^2/(c's)} (t-s)^{-\delta} \right).
    \end{align*}
By standard sum and integral comparisons we obtain
\begin{align*}
        q_t((u, h), o_V) \le hc e^{-h^2/(c't)} \frac{1}{t^\delta \sqrt{t}} + \frac{1}{t^\delta \sqrt{t}}(2k^\gamma - h)ce^{-(2k^\gamma-h)^2/(c't)}.
    \end{align*}
    Furthermore, using that $x e^{-x^2/(ct)} \le \bar{c} \sqrt{t} e^{-x^2/(2ct)}$ for all $x$, if $(2k^\gamma - h)\ge h$ we get
    \begin{equation*}
        q_t((u, h), o_V) \le c t^{-\delta} e^{-h^2/(c't)};
    \end{equation*}
    the result follows by inserting the definition of $\delta$.
\end{proof}

\begin{lem}\label{lemma:SmallTimeFract}
    Let $(u, 0) \in \partial D(o_V,k)$, for $t< k^{\beta + \gamma'}$ we have for all $h>0$
    \begin{equation*}
        q_t(o_V, (u, h)) \le \frac{c}{k^{\alpha + \gamma'}}.
    \end{equation*}
\end{lem}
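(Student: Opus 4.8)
The plan is to combine the large-deviation exit-time bound of Lemma~\ref{lemma:ExitCombLDP} with the already-established heat-kernel bound of Lemma~\ref{lemma:HKonFractComb}, splitting the walk according to whether it has exited the ball $D(o_V, k)$ by a certain intermediate time. Fix $(u,0) \in \partial D(o_V, k)$ and $t < k^{\beta + \gamma'}$; by the ballot-type decomposition as in~\eqref{eq:Application-Ballot-Sec3} it suffices to bound $q_s(o_V, (u,0))$ for $s \leq t$, so I will first reduce to the case $h = 0$ and then treat the vertical factor separately as in the proof of Lemma~\ref{lemma:HKonFractComb}.

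For the base-point estimate, first I would introduce a time cutoff $s_0 = \varepsilon k^{\beta + \gamma'}$ for small $\varepsilon > 0$. Write $q_s(o_V,(u,0)) = \frac{1}{\deg((u,0))}\mathbb{P}_{o_V}(X_s = (u,0))$ and split according to the event $\{T_{o_V, k} \geq s_0\}$ and its complement. On the event $\{T_{o_V, k} < s_0\}$, one has $s_0 < t < k^{\beta+\gamma'}$, and Lemma~\ref{lemma:ExitCombLDP} (with $t$ replaced by $s_0 = \varepsilon k^{\beta+\gamma'}$, so that the ratio $k^{\beta+\gamma'}/s_0 = \varepsilon^{-1}$ is a large constant) gives $\mathbb{P}_{o_V}(T_{o_V,k} < s_0) \leq C\exp(-c\varepsilon^{-c(\beta)})$, which can be made as small as needed; but to actually reach $(u,0) \in \partial D(o_V,k)$ by time $s$, the walk must first have exited every ball $D(o_V,k')$ for $k' < k$ — more to the point, reaching a point on $\partial D(o_V,k)$ forces $T_{o_V,k-1} \leq s \leq t < k^{\beta+\gamma'}$, so actually the whole contribution requires an early exit and one simply applies Lemma~\ref{lemma:ExitCombLDP} directly: $q_s(o_V,(u,0)) \leq \frac{1}{\deg}\mathbb{P}_{o_V}(T_{o_V, k-1} \leq t) \leq C\exp(-c(k^{\beta+\gamma'}/t)^{c(\beta)})$. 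Since $t < k^{\beta+\gamma'}$, the exponent is at least $\exp(-c)$ — this is not small enough on its own, so the real mechanism must be different: one uses $\mathbb{P}_{o_V}(X_s = (u,0)) = \mathbb{P}_{o_V}(X_s = (u,0), T_{o_V,k-1} \leq s)$ and then, by the strong Markov property at $T_{o_V,k-1}$ together with the reversibility/maximality bound $q_{s'}(v,(u,0)) \leq q_{s'}((u,0),(u,0))$ and Lemma~\ref{lemma:HeathonSkeleton}, bound the heat kernel from the exit point. Summing Lemma~\ref{lemma:ExitCombLDP} over the dyadic-type scales of exit times and using $\sum_{j} \exp(-c(k^{\beta+\gamma'}/t_j)^{c(\beta)}) t_j^{-\delta}$-style estimates yields the $k^{-(\alpha+\gamma')}$ bound, since $k^{\beta+\gamma'}$ dominates $t$ and $\delta(\beta+\gamma') = \alpha+\gamma'$.

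More cleanly: I would apply~\eqref{eq:Bound-HK-via-Green}-type reasoning via $q_s(o_V,(u,0)) \leq \sqrt{q_s(o_V,o_V)\, q_s((u,0),(u,0))}$ (Cauchy–Schwarz, as in Lemma~\ref{lemma:HeathonSkeleton2}) and then bound $q_s(o_V,o_V)$ using the fact that, for $s < k^{\beta+\gamma'}$, the walk from $o_V$ has not yet exited $D(o_V,k)$ with overwhelming probability, so by~\eqref{eq:Bound-HK-via-Green} with $B = D(o_V,k)$, $q_s(o_V,o_V) \leq \frac{2 g_{D(o_V,k)}(o_V,o_V)}{s\,\mathbb{P}_{o_V}(T_{o_V,k} \geq s)} \leq \frac{c\, k^{\beta-\alpha}}{s}$; but this still has the wrong power of $s$. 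The correct route is to realize that since $t < k^{\beta+\gamma'}$, Lemma~\ref{lemma:HKonFractComb} (valid for all $t$) already gives $q_t(o_V,(u,h)) \leq c\, t^{-\delta} e^{-h^2/(ct)}$ with $\delta = \frac{\alpha+\gamma'}{\beta+\gamma'}$; and the claimed bound $\frac{c}{k^{\alpha+\gamma'}} = c\,(k^{\beta+\gamma'})^{-\delta}$ would follow from $q_t(o_V,(u,h)) \leq c\,(k^{\beta+\gamma'})^{-\delta}$, i.e.\ from monotonicity of the bound in the "effective time" — which fails pointwise but holds after one more use of the exit-time estimate to replace the true elapsed time $t$ by the larger natural scale $k^{\beta+\gamma'}$. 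Concretely, decompose $\mathbb{P}_{o_V}(X_t = (u,h))$ by the last time $\sigma$ the walk was at height $0$ in the $u$-tooth: before $\sigma$ the walk was doing a horizontal-type excursion reaching $\partial D(o_V,k)$, which by Lemma~\ref{lemma:ExitCombLDP} forces $\sigma$ (hence a quantity $\geq$ the exit time) to be at least of order $k^{\beta+\gamma'}$ with probability $1 - C\exp(-c(k^{\beta+\gamma'}/t)^{c(\beta)})$; combining the forced lower bound on the effective time with Lemma~\ref{lemma:HKonFractComb}–type estimates for each piece and summing over $\sigma$ gives the stated bound. The main obstacle is handling this interplay carefully: the naive heat-kernel bound has the "wrong" time ($t$ rather than $k^{\beta+\gamma'}$), and one must genuinely exploit that reaching a point at graph-distance $\approx k$ in the base in time $t \ll k^{\beta+\gamma'}$ is a rare event, so that the effective time scale controlling the heat kernel is forced up to $k^{\beta+\gamma'}$ — making this the crux of the argument, with the ballot estimate~\eqref{eqn:GeneralBallot} for the vertical coordinate being routine.
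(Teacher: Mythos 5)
Your high-level plan is right: combine the exit-time estimate (Lemma~\ref{lemma:ExitCombLDP}) with on-diagonal heat-kernel bounds, and split the walk by when it first exits a macroscopic ball. But you never settle on a decomposition that actually closes, and the specific one you gesture at has a genuine gap. If you condition on the exit time $\tau$ of $D(o_V,k-1)$ and bound the remaining heat kernel by $c(t-\tau)^{-\delta}$, the piece where $\tau$ is close to $t$ is not controlled: for $\tau$ near $t$ the exit-time estimate only gives a constant (since $\tau < t < k^{\beta+\gamma'}$), and $(t-\tau)^{-\delta}$ blows up. Your proposed bookkeeping $\sum_j \exp\bigl(-c(k^{\beta+\gamma'}/t_j)^{c(\beta)}\bigr)\,t_j^{-\delta}$ has the wrong argument in the heat-kernel factor — it should be the \emph{remaining} time $t - t_j$, not $t_j$ — and the resulting sum does not converge to $k^{-(\alpha+\gamma')}$. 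Your other formulation, that reaching $\partial D(o_V,k)$ "forces $\sigma$ to be at least of order $k^{\beta+\gamma'}$ with high probability," cannot be right as stated: $\sigma \leq t < k^{\beta+\gamma'}$ deterministically, so no lower bound on $\sigma$ of that order is possible. What the exit-time estimate gives is smallness of the whole event, not a lower bound on an exit time that has already been assumed to occur.

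The device the paper uses to kill the late-exit singularity is absent from your proposal: split on the exit of the \emph{half-radius} ball, $\{T_{o_V,k/2} \leq t/2\}$ versus its complement. On the first event, the strong Markov property at $T_{o_V,k/2}$ leaves remaining time $\geq t/2$, so Lemma~\ref{lemma:HeathonSkeleton2} gives $ct^{-\delta}$, and multiplying by the exit-time probability (small because $(k/2)^{\beta+\gamma'}/(t/2)$ is still $\gtrsim k^{\beta+\gamma'}/t$) and optimizing as in the paper's $\eta$-substitution produces $ck^{-(\alpha+\gamma')}$ with no divergence. On the complementary event $\{T_{o_V,k/2} > t/2\}$ — which is exactly your problematic "late exit" regime — the paper invokes \emph{reversibility of the walk}: the time-reversed path from $x \in \partial D(o_V,k)$ to $o_V$ must have exited $D(x,k/2)$ within time $t/2$ (since $d_{\tilde G}(x_1,\partial B_{\tilde G}(o,k/2)) \geq k/2$), so this case reduces, up to a degree constant, to the first one with $o_V$ and $x$ swapped. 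Your only use of reversibility is the diagonal maximality $q_{s'}(v,y) \leq q_{s'}(y,y)$, which is a different statement and does not substitute for this path-reversal argument. Without the $k/2$ buffer and the time-reversal step, the proof does not go through.
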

\begin{proof}
    Let us consider $T_{o_V, k/2}$ (recall~\eqref{eqn:ExitHorBalls}), then
    \begin{equation*}
        \mathbb{P}_{o_V}\left( X_t = x\right) = \mathbb{P}_{o_V}\left( X_t = x , T_{o_V, k/2} \le \frac{t}{2}\right) + \mathbb{P}_{o_V}\left( X_t = x , T_{o_V, k/2} > \frac{t}{2}\right).
    \end{equation*}
    Note that by reversibility
    \begin{equation*}
        \mathbb{P}_{o_V}\left( X_t = x , T_{o_V, k/2} > \frac{t}{2}\right) \le C\mathbb{P}_{x}\left( X_t = o_V , T_{x, k/2} \le \frac{t}{2}\right).
    \end{equation*}
    Hence, we reduce to studying
    \begin{equation*}
        \mathbb{P}_{o_V}\left( X_t = x , T_{o_V, k/2} \le \frac{t}{2}\right) \le \mathbb{P}_{o_V}\left(T_{o_V, k/2} \le \frac{t}{2}\right) \max_{0 \le s \le t/2} \max_{y \in \partial D(o_V, k/2)} \mathbb{P}_{y}\left( X_{t-s} = x\right).
    \end{equation*}
    It follows from Lemma~\ref{lemma:HeathonSkeleton2} that the second probability is bounded from above by $ct^{-\frac{\alpha+\gamma'}{\beta+\gamma'}}$. For the first one we use Lemma~\ref{lemma:HorHittingTime}, and with the choice $t = k^{\beta+\gamma'}/\eta$ we obtain
    \begin{align*}
        \mathbb{P}_{o_V}\left( X_t = x , T_{o_V, k/2} \le \frac{t}{2}\right) 
        &\le c t^{-\frac{\alpha+\gamma'}{\beta+\gamma'}} \exp\left(- c \left( \frac{k^{\beta+\gamma'}}{t}\right)^{c(\beta)}\right)\\
        & \le c k^{-(\alpha+\gamma')} \eta^{\frac{\alpha+\gamma'}{\beta+\gamma'}}e^{-c\eta^{c(\beta)}}\\
        &\le c k^{-(\alpha+\gamma')} \sup_{\eta>0}\eta^{\frac{\alpha+\gamma'}{\beta+\gamma'}}e^{-c\eta^{c(\beta)}} \\
        &\le ck^{-(\alpha+\gamma')}.\qedhere
    \end{align*}
\end{proof}
We now introduce an important exploration procedure of the underlying base graph $\tilde{G} = (\tilde{V},\tilde
{E})$ starting from the origin. To that end, we set
\begin{equation}
\label{eq:Definition-A_k}
\begin{minipage}{0.8\linewidth}
    $A_0 \coloneqq \varnothing$, and define recursively
    \begin{equation*}
    \begin{split}
       \hspace{-1.2cm} A_k & \coloneqq \{u_i \in \tilde{V} \setminus (A_1   \, \cup \, \dots \, \cup  \,A_{k-1} ) \,\colon\, \\
       &  u_i = \arg\min\{d_{\tilde{G}}(o, v) \, \colon v \in \tilde{V} \setminus (A_1   \, \cup \, \dots \, \cup  \,A_{k-1} \cup \{u_1,...,u_{i-1}\} ) \},  \\
       & 1 \leq i \leq \lfloor k^{\alpha-1} \rfloor
     \},  
     \end{split}
    \end{equation*}
    for $k \geq 1$.
\end{minipage}
\end{equation}
   If there are ties one can simply use any deterministic rule. Then we deduce that, for any fixed $\varepsilon>0$, for any $x \in A_k$ we have (for $k$ large enough) $d_{\tilde{G}}(o, x)\ge k^{1-\varepsilon}$. Indeed, if this was not the case, we would obtain $d_{\tilde{G}}(o, x)< k^{1-\varepsilon}$ for some $x \in A_k$, and by the construction in~\eqref{eq:Definition-A_k} of the sets $(A_j)_{j \in \mathbb{N}}$, 
   \begin{equation*}
       \bigcup_{j = 0}^{k-1} A_{j} \subseteq B_{\tilde{G}}(o, (k-1)^{1-\varepsilon}).
   \end{equation*}
   Moreover, the sets $(A_j)_{j \in \mathbb{N}}$ are pairwise disjoint by construction. Hence, using \eqref{eqn:VolGrowth}, we arrive at
\begin{equation*}
    C (k-1)^{\alpha-\varepsilon\alpha} \ge |B_{\tilde{G}}(o, (k-1)^{1-\varepsilon})| \ge \sum_{j = 0}^{k-1} |A_{j}| \ge c(k-1)^{\alpha},
\end{equation*}
which is a contradiction since $C, c>0$ are fixed absolute constants. \smallskip

We now let $\mathcal{Z}_{k, \ell}$ for $k, \ell \in \mathbb{N}$ be the random variable counting the number of collisions happening in $Q_{k, \ell} \coloneqq \{(u, h) \in V \colon u \in A_k, 0 \le h \le \ell\}$ and $\tilde{\mathcal{Z}}_{k, \ell}$ be the random variable counting the number of collisions happening in $\widetilde{Q}_{k, \ell} \coloneqq\{(u, h) \in V \colon u \in A_k, \ell/3 \le h \le 2\ell/3\}$; formally, we set
\begin{equation}
\label{eq:CollInQ-def}
\begin{split}
    \mathcal{Z}_{k,\ell} & = \sum_{n = 0}^\infty \mathds{1}\{n \in \mathbb{N} \, \colon \, X_n = Y_n \in Q_{k,\ell}\}, \\
    \tilde{\mathcal{Z}}_{k,\ell} & = \sum_{n = 0}^\infty \mathds{1}\{n \in \mathbb{N} \, \colon \, X_n = Y_n \in \tilde{Q}_{k,\ell}\}.
    \end{split}
\end{equation}
With the next lemma we bound from above the first moment of $\mathcal{Z}_{k,\ell}$, and bound from below the (conditional) first moment of $\tilde{\mathcal{Z}}_{k,\ell}$, given that $ \mathcal{Z}_{k,\ell}>0$.

\begin{lem}\label{lemma:2ExpectationsFractals}
    For any $\varepsilon>0$ and any $k$ large enough, have that
    \begin{enumerate}
        \item $\mathbb{E}_{o_V}[\mathcal{Z}_{k, \ell}] \le \ell k^{\beta - \alpha - \gamma' - 1 + \varepsilon}$.
        \item $\mathbb{E}_{o_V}[\mathcal{Z}_{k, \ell} | \, \tilde{\mathcal{Z}}_{k, \ell}>0 ] \ge c\ell$.
    \end{enumerate}
\end{lem}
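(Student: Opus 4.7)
My plan is to expand the expectation as a double sum over times and locations in $Q_{k,\ell}$: using independence and the identity $\mathbb{P}_{o_V}(X_n = x) = \deg(x)\, q_n(o_V, x)$ together with uniform boundedness of degrees on $G$ (which follows from bounded degrees on $\tilde{G}$),
\[
\mathbb{E}_{o_V, o_V}[\mathcal{Z}_{k,\ell}] \;=\; \sum_{x \in Q_{k, \ell}} \sum_{n \geq 0} \mathbb{P}_{o_V}(X_n = x)^{2} \;\leq\; C\sum_{x \in Q_{k, \ell}} \sum_{n \geq 0} q_n(o_V, x)^{2}.
\]
For each $x = (u, h) \in Q_{k, \ell}$ I would set $m_u := d_{\tilde{G}}(o, u)$ and split the sum in $n$ at the threshold $t^{\star} = m_u^{\beta + \gamma'}$: for $n \leq t^{\star}$, I would apply Lemma~\ref{lemma:SmallTimeFract} (with radius $m_u$, legitimate since $u \in \partial D(o_V, m_u)$) to get $q_n(o_V, x) \leq c\, m_u^{-(\alpha + \gamma')}$ uniformly in $h$, and for $n > t^{\star}$, I would apply Lemma~\ref{lemma:HKonFractComb} to get $q_n(o_V, x) \leq c\, n^{-(\alpha + \gamma')/(\beta + \gamma')}$. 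Both pieces contribute $O(m_u^{\beta - \gamma' - 2\alpha})$; the integrability of the large-time tail relies on $2(\alpha + \gamma')/(\beta + \gamma') > 1$, which follows from the standing assumption $\gamma' > \beta - \alpha$ combined with $\alpha \geq 1$. Since $\beta - \gamma' - 2\alpha < 0$ and $m_u \geq k^{1 - \varepsilon}$ for $k$ large, I may bound $m_u^{\beta - \gamma' - 2\alpha} \leq k^{(1 - \varepsilon)(\beta - \gamma' - 2\alpha)}$, and summing over $x = (u, h) \in Q_{k, \ell}$ using $|A_k| \leq C k^{\alpha - 1}$ yields
\[
\mathbb{E}_{o_V, o_V}[\mathcal{Z}_{k, \ell}] \;\leq\; C\, \ell\, k^{\alpha - 1 + (1 - \varepsilon)(\beta - \gamma' - 2\alpha)} \;=\; C \ell\, k^{\beta - \alpha - \gamma' - 1 + \varepsilon(\gamma' + 2\alpha - \beta)},
\]
which is (1) after an arbitrarily small rescaling of $\varepsilon$ (note that $\gamma' + 2\alpha - \beta > 0$).

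\textbf{Part (2).} Here the plan is to apply the strong Markov property at the first-collision time $\sigma := \inf\{n \geq 0 : X_n = Y_n \in \tilde{Q}_{k, \ell}\}$. On $\{\tilde{\mathcal{Z}}_{k, \ell} > 0\} = \{\sigma < \infty\}$ the walks are simultaneously at some $(u, h)$ with $u \in A_k$ and $h \in [\ell/3, 2\ell/3]$, and since further collisions in $Q_{k, \ell}$ are non-negative,
\[
\mathbb{E}_{o_V, o_V}[\mathcal{Z}_{k, \ell} \mid \tilde{\mathcal{Z}}_{k, \ell} > 0] \;\geq\; \inf_{\substack{u \in A_k \\ h \in [\ell/3,\, 2\ell/3]}} \mathbb{E}_{(u, h), (u, h)}[\mathcal{Z}_{k, \ell}],
\]
so the task reduces to bounding this infimum from below by $c\ell$. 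I would restrict to collisions on the single tooth rooted at $u$, at heights in $[1, \ell]$, occurring before either walk first reaches the base $(u, 0)$. Since $f_\gamma(u) \geq k^{(1 - \varepsilon)\gamma} \gg \ell$ for $k$ large, on a positive-probability event (by the standard gambler's-ruin estimate for 1D simple random walk started at $h \asymp \ell$) both walks stay strictly between heights $1$ and $f_\gamma(u)$ up to time $\ell^2$, and on that event they evolve as independent simple random walks on $\mathbb{Z}$. The one-dimensional local central limit theorem then gives $\sum_{h' = 1}^{\ell} \mathbb{P}_h^{\mathbb{Z}}(X_t = h')^2 \asymp 1/\sqrt{t}$ for $1 \leq t \leq \ell^2$, and summing over $t$ produces the desired lower bound of order $\ell$.

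\textbf{Main obstacle.} The delicate part is (1): the target exponent $\beta - \alpha - \gamma' - 1$ is sharp, so one must match the short-time boundary bound from Lemma~\ref{lemma:SmallTimeFract} with the diffusive-type bound from Lemma~\ref{lemma:HKonFractComb} precisely at $t^{\star} = m_u^{\beta + \gamma'}$, and must absorb every $\varepsilon$-loss stemming from the weak control $m_u \geq k^{1 - \varepsilon}$ on the positions of the smoothed spheres $A_k$ into a single $\varepsilon$-term in the final exponent of $k$. Part (2) is more robust, but care is needed to ensure the one-dimensional reduction is actually valid; this hinges on $\ell$ not exceeding the tooth height $f_\gamma(u) = \lfloor m_u^\gamma \rfloor$, which is automatic in the intended application where $\ell$ is at most of order $k^{\gamma}$.
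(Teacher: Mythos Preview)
Your Part~(1) is essentially the paper's argument: expand the expectation as $\sum_{x}\sum_t q_t(o_V,x)^2$, split the time sum at the diffusive scale $k^{\beta+\gamma'}$ (you do this per vertex at $m_u^{\beta+\gamma'}$, the paper uniformly at $k^{\beta+\gamma'}$, which is equivalent up to $\varepsilon$-losses), apply Lemma~\ref{lemma:SmallTimeFract} for small times and Lemma~\ref{lemma:HKonFractComb} for large times, and use $m_u\ge k^{1-\varepsilon}$ together with $|A_k|\le Ck^{\alpha-1}$. One small remark: the integrability condition $2(\alpha+\gamma')>\beta+\gamma'$ only needs $2\alpha+\gamma'>\beta$, which already follows from $\beta\le\alpha+1\le 2\alpha$ and $\gamma'>0$; invoking $\gamma'>\beta-\alpha$ is unnecessary here (the paper just writes ``since $2\alpha>\beta$'').

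Your Part~(2) is correct but takes a genuinely different route. The paper dispatches it in one line via~\cite[(3.5)]{BPS}: conditioning on the first collision at $x=(u,h)\in\widetilde{Q}_{k,\ell}$ and using the Green--resistance identity gives
\[
\mathbb{E}_{o_V}[\mathcal{Z}_{k,\ell}\mid \tilde{\mathcal{Z}}_{k,\ell}>0]\;\ge\;\tfrac12\,g_{Q_{k,\ell}}(x,x)\;=\;\tfrac12\,R^G_{\mathrm{eff}}(x,Q_{k,\ell}^c)\;\ge\;c\ell,
\]
the last step because from height $h\in[\ell/3,2\ell/3]$ one must traverse at least $\ell/3$ tooth edges to exit $Q_{k,\ell}$. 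Your approach instead reduces to a one-dimensional collision computation via the LCLT. This works, but be careful with the phrasing ``on that event they evolve as independent simple random walks on~$\mathbb{Z}$'': conditioning on staying in the segment changes the law. The clean way to say it is via coupling: the two height processes agree with two independent SRWs $\hat X,\hat Y$ on $\mathbb{Z}$ started at $h$ up to the first time either exits $[1,f_\gamma(u)-1]$, hence the number of collisions in $Q_{k,\ell}$ dominates $\sum_{t<T'}\mathds{1}\{\hat X_t=\hat Y_t\in[1,\ell]\}$ with $T'$ the first exit of either from $[1,\ell]$; then the killed heat-kernel estimate $\sum_{j=1}^\ell\sum_{t\le c\ell^2} p_t^{[1,\ell]}(h,j)^2\asymp\sum_{t\le c\ell^2}t^{-1/2}\asymp\ell$ finishes. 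The paper's route is shorter and avoids these subtleties entirely; yours is more self-contained but costs a paragraph of care.
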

\begin{proof}
    We begin by showing the second statement. Note that, since we are conditioning on the event $\{\tilde{\mathcal{Z}}_{k, \ell}>0\}$, there is a collision at some vertex $x= (u, h) \in V$ with $\ell/3 \le h \le 2\ell/3$. Hence, the total number of collisions in $Q_{k, \ell}$ is bounded from below by the total number of collisions occurring in the tooth $\{(u, h)\colon 0 \le h \le \ell \}$ before this segment is exited by one of the random walks (for the first time). Using \cite[(3.5)]{BPS} we obtain
    \begin{equation*}
        \mathbb{E}_{o_V}[\mathcal{Z}_{k, \ell} | \, \tilde{\mathcal{Z}}_{k, \ell}>0 ] \ge \frac{1}{2} g_{Q_{k, \ell}} (x, x) \ge \frac{1}{2} R^G_{\mathrm{eff}}(x, Q_{k, \ell}^c) \ge c \ell.
    \end{equation*}
    Let us now focus on the first statement. We remark that, in what follows, $\varepsilon>0$ can change from line to line; however, since we can choose it as small as we wish, this will not make a difference. We highlight that, when summing over $Q_{k, \ell}$ the points in $\tilde{V}$ that are the closest to the origin $o$ are at distance $k^{1 - \varepsilon}$. Since the bounds Lemma~\ref{lemma:HKonFractComb} and Lemma~\ref{lemma:SmallTimeFract} are clearly decreasing in $k$, we take the worst case scenario. 
    The first statement follows from Lemma~\ref{lemma:HKonFractComb} and Lemma~\ref{lemma:SmallTimeFract}; indeed, noting that $2\frac{\alpha+\gamma'}{\beta+\gamma'} >1$ since $2 \alpha > \beta$,
    \begin{align*}
        \mathbb{E}_{o_V}[\mathcal{Z}_{k, \ell}] & \stackrel{\eqref{eq:CollInQ-def}}{=} \sum_{t = 0}^\infty \sum_{x \in Q_{k, \ell}} q_{t}(o_V, x)^2 \\
        &\le \ell \sum_{t < k^{\beta+\gamma'}} \frac{ck^{\alpha-1}}{k^{2(\alpha+\gamma') -\varepsilon}} + \ell\sum_{t \ge k^{\beta+\gamma'}} \frac{ck^{\alpha - 1}}{t^{2\frac{\alpha+\gamma'}{\beta+\gamma'}- \varepsilon}}\\
        &\le c\ell k^{\beta -\alpha - \gamma' - 1 + \varepsilon} + \ell k^{\alpha-1} k^{(\beta+\gamma')(-2\frac{\alpha+\gamma'}{\beta+\gamma'}+1 + \varepsilon) }\\
        &\le c\ell k^{\beta -\alpha - \gamma' - 1  + \varepsilon}.\qedhere
    \end{align*}
\end{proof}
We are now ready for the
\begin{proof}[Proof of Theorem~\ref{theo:Weak}]
We have the inequality
\begin{equation}\label{eqn:EasyConditioning}
    \mathbb{E}_{o_V}[\mathcal{Z}_{k, \ell}] \ge \mathbb{E}_{o_V}[\mathcal{Z}_{k, \ell} | \, \tilde{\mathcal{Z}}_{k, \ell}>0 ] \mathbb{P}_{o_V}(\tilde{\mathcal{Z}}_{k, \ell}>0 ),
\end{equation}
which by Lemma~\ref{lemma:2ExpectationsFractals} implies
\begin{equation*}
    \mathbb{P}_{o_V}(\tilde{\mathcal{Z}}_{k, \ell}>0 ) \le c k^{\beta - \alpha - \gamma' - 1 + \varepsilon }.
\end{equation*}
Set $j_0 \coloneqq \inf\{j \in \mathbb{N}\,  \colon \, 2^j \ge k^\gamma\}$, observe that $j_0 \le c \log_{2}(k^\gamma)$ for some absolute $c>0$. Furthermore, set $\mathcal{L} = \mathcal{L}(k) \coloneqq \{2^j\}_{j = 1}^{j_0}$ and note $|\mathcal{L}| = j_0$. We remark that the sets $(\tilde{Q}_{k, \ell})_{k,\ell}$ with $k \ge 0$ and $\ell \in \mathcal{L}(k)$ exhaust the vertex set $V$ of the infinite graph $G$. Then
\begin{equation*}
    \sum_{k = 1}^\infty \sum_{\ell \in \mathcal{L}} \mathbb{P}_{o_V}(\tilde{\mathcal{Z}}_{k, \ell}>0 ) \le \sum_{k = 1}^\infty \sum_{\ell \in \mathcal{L}} c k^{\beta - \alpha - \gamma' - 1 + \varepsilon} \le \sum_{k = 1}^\infty \log_{2}(k^\gamma) c k^{\beta - \alpha - \gamma' -1 + \varepsilon},
\end{equation*}
and this sum is finite whenever $\beta - \alpha - \gamma' -1 + \varepsilon < - 1$, which is the same as $\gamma' > \beta - \alpha + \varepsilon$. For each $\gamma' > \beta - \alpha$ we can choose one $\varepsilon>0$ such that the previous condition is fulfilled. We conclude by applying \cite[Corollary~2.3]{BPS}. We observe that the condition $\gamma' > \beta - \alpha$ coincides with the requirement $\gamma > \beta - \alpha$, whence the desired conclusion follows.
\end{proof}

\section{Collision on the $\mathbb{Z}^2$ comb} 
\label{sec:2D-Comb}In this section we consider the comb graph $\mathrm{Comb}(\tilde{G},f_\gamma)$ in which $\tilde{G} = \mathbb{Z}^2$ and $f_\gamma(z) = \lfloor \log^\gamma(\|z\|_\infty \vee 1)\rfloor$ for $\gamma>0$. Here is our main theorem.

\begin{thm}\label{theo:2d}
    Let $G = \mathrm{Comb}(\mathbb{Z}^2,f_\gamma)$ be the comb graph where the underlying (base) graph is $ \mathbb{Z}^2$ and the tooth profile is given by 
    \begin{equation}
        f_\gamma(z) = \varrho_{\ell,\gamma}(\|z\|_\infty), \qquad \text{with }\varrho_{\ell,\gamma}(k) = \lfloor \log^\gamma(k \vee 1)\rfloor \text{ as in~\eqref{eq:Generic-teeth}}.
    \end{equation}
    We have the following phase transition:
    \begin{enumerate}
        \item If $\gamma \le 1$, then $G$ has the infinite collision property.
        \item If $\gamma > 1$, then $G$ has the finite collision property.
    \end{enumerate}
\end{thm}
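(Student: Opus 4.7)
For part~(1), the assumption $\gamma \le 1$ places us in the regime where the Green kernel criterion~\eqref{eq:Infinite-collision} applies directly. I would take the exhaustion $D_r \coloneqq \{(z, h) \in V \, \colon \, \|z\|_\infty \le r\}$. Arguing as for~\eqref{eq:Eff-res-equality} in Section~\ref{sec:Fractal-base}, one has
\begin{equation*}
g_{D_r}(o_V, o_V) = R_{\mathrm{eff}}^G(o_V, D_r^c) = R_{\mathrm{eff}}^{\mathbb{Z}^2}(o, B_\infty(o,r)^c),
\end{equation*}
which is comparable to $\log r$ by the classical logarithmic scaling of the effective resistance on $\mathbb{Z}^2$. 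For any $x = (z, h) \in D_r$, Thompson's principle~\eqref{eq:Thompson} applied to the flow that first transports unit mass along the tooth from $(z,h)$ to $(z, 0)$ and then realizes the optimal planar flow from $(z,0)$ to $B_\infty(o,r)^c$ yields
\begin{equation*}
g_{D_r}(x, x) \le h + R_{\mathrm{eff}}^{\mathbb{Z}^2}(z, B_\infty(o,r)^c) \le \log^\gamma r + C \log r \le C' \log r,
\end{equation*}
since $\gamma \le 1$. Hence $g_{D_r}(x, x) \le C g_{D_r}(o_V, o_V)$ uniformly on $D_r$, and the infinite collision property follows from~\eqref{eq:Infinite-collision}.

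For part~(2), with $\gamma > 1$, I would follow the blueprint of Section~\ref{sec:Fractal-base}, with concentric $\|\cdot\|_\infty$-spheres of $\mathbb{Z}^2$ in place of the exploration sets $A_k$ from~\eqref{eq:Definition-A_k}; in contrast to the fractal setting, the regularity of $\mathbb{Z}^2$ should allow us to use spheres directly. Set $\mathcal{A}_k \coloneqq \{u \in \mathbb{Z}^2 \, \colon \, \|u\|_\infty = k\}$ (with $|\mathcal{A}_k| \asymp k$), and partition via $Q_{k,\ell} \coloneqq \{(u, h) \, \colon \, u \in \mathcal{A}_k, \, 0 \le h \le \ell\}$ and $\tilde{Q}_{k,\ell} \coloneqq \{(u, h) \, \colon \, u \in \mathcal{A}_k, \, \ell/3 \le h \le 2\ell/3\}$, with $\ell$ ranging over dyadic values up to $\log^\gamma k$. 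Mirroring Lemma~\ref{lemma:2ExpectationsFractals}(2), for $x \in \tilde{Q}_{k,\ell}$ one has $g_{Q_{k,\ell}}(x, x) \ge R^G_{\mathrm{eff}}(x, Q_{k,\ell}^c) \ge c\ell$, hence $\mathbb{E}_{o_V}[\mathcal{Z}_{k,\ell} \mid \tilde{\mathcal{Z}}_{k,\ell} > 0] \ge c \ell$. Via~\eqref{eqn:EasyConditioning}, the task then reduces to showing $\sum_{k, \ell} \mathbb{E}_{o_V}[\mathcal{Z}_{k,\ell}]/\ell < \infty$ and invoking~\cite[Corollary 2.3]{BPS}.

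The main obstacle, constituting the technical core of the section, is a sharp heat-kernel upper bound on $G = \mathrm{Comb}(\mathbb{Z}^2, f_\gamma)$ of the form
\begin{equation*}
q_t(o_V, x) \le \frac{C}{t \vee 1}\exp\!\left(-\,c \,\frac{k^2 \log^\gamma k}{t}\right), \qquad x \in Q_{k,\ell}, \ t \ge 1,
\end{equation*}
complemented by a uniform small-time bound in the spirit of Lemma~\ref{lemma:SmallTimeFract}. The underlying idea, previewed in the introduction, exploits the fact that the mean duration $\bar V_k$ of a vertical excursion from a vertex in $\mathcal{A}_k$ is of order $\log^\gamma k$, while the diffusive scale at which the horizontal walk reaches $\mathcal{A}_k$ is $t \asymp k^2 \log^\gamma k$. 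Concentration of the partial sums $\sum_{i \le n} V_i$ of excursion times (for which the second-moment input of Lemma~\ref{lemma:MomentsOccTime} is crucial, since excursions take place in teeth of merely logarithmic height) would allow one to compare $\mathbb{P}_{o_V}(X_t = (u, 0))$ with $\mathbb{P}_o^{\mathbb{Z}^2}(\tilde X_n = u)/\bar V_k$ for $n \approx t/\bar V_k$. This produces $q_t(o_V, (u, 0)) \lesssim 1/t$ at the base of the tooth (up to the Gaussian off-diagonal factor), and the analogous bound at heights $h > 0$ follows by combining with the ballot-type identity~\eqref{eqn:GeneralBallot}, as in Lemma~\ref{lemma:HKonFractComb}.

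Given such a heat-kernel bound, splitting the time sum at $t \asymp k^2 \log^\gamma k$ gives
\begin{equation*}
\mathbb{E}_{o_V}[\mathcal{Z}_{k, \ell}] = \sum_{t \ge 0} \sum_{x \in Q_{k,\ell}} q_t(o_V, x)^2 \lesssim \frac{|Q_{k,\ell}|}{k^2 \log^\gamma k} \lesssim \frac{\ell}{k \log^\gamma k},
\end{equation*}
so that $\mathbb{P}_{o_V}(\tilde{\mathcal{Z}}_{k, \ell} > 0) \lesssim 1/(k \log^\gamma k)$. Summing over the $O(\log \log k)$ dyadic scales of $\ell$ and then over $k \ge 2$ bounds the total by a series of the form $\sum_{k \ge 2} \log\log(k)/(k \log^\gamma k)$, which converges precisely when $\gamma > 1$. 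Thus the entire argument reduces to the concentration-based heat-kernel analysis; once this is in place, summability and the finite collision property follow in the same spirit as in Section~\ref{sec:Fractal-base}.
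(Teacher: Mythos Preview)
Your proposal is correct and follows essentially the same approach as the paper. Part~(1) is exactly the Green kernel argument the paper attributes to~\cite[Section~6, Remark~5]{BPS}, and for part~(2) the paper uses precisely your partition by $\|\cdot\|_\infty$-spheres, the same $Q_{k,\ell}$/$\tilde Q_{k,\ell}$ decomposition, the same lower bound $\mathbb{E}_{o_V}[\mathcal{Z}_{k,\ell}\mid \tilde{\mathcal{Z}}_{k,\ell}>0]\ge c\ell$, the same concentration-of-excursion-times mechanism to obtain $q_t(x,x)\le c/t$ (your description matches the events $A,B$ of~\eqref{eqn:LargeDeviationEvents} and the use of Lemma~\ref{lemma:MomentsOccTime}), and the same final sum $\sum_k \log\log(k)/(k\log^\gamma k)$.

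One minor remark: the off-diagonal heat-kernel bound you state, with the factor $\exp(-c\,k^2\log^\gamma k/t)$, is stronger than what the paper actually establishes. The exit-time estimate obtained (Lemma~\ref{lemma:HorExitCombZ2}) has a sub-Gaussian exponent $1/3$, i.e.\ $\exp(-c(k^2\log^\gamma k/t)^{1/3})$, and the paper then only extracts from it the two-regime bound $q_t(0,x)\le c/(k^2\log^\gamma k)$ for $t<k^2\log^\gamma k$ and $q_t(0,x)\le c/t$ for $t\ge k^2\log^\gamma k$ (Proposition~\ref{prop:HeatKernelZ2}). Since your summability computation only uses this cruder two-regime form, the distinction is immaterial.
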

As remarked earlier, the first part of the theorem was established as a consequence of~\eqref{eq:Infinite-collision}, in \cite[Section 6, Remark 5]{BPS} and hence, in what follows, we focus on establishing the second statement. To prove (2) we need several estimates concerning the random walk on $\mathbb{Z}^2$ and $\mathrm{Comb}(\mathbb{Z}^2,f_\gamma)$. These estimates will be developed in the next two subsections. 

\subsection{Upper bound for the heat kernel}
We write $q_t(x, y)$ for the heat kernel $p_t^G(x,y)$ of the random walk on the comb graph $G = (V,E) = \mathrm{Comb}(\mathbb{Z}^2,f_\gamma)$, for $t \in \mathbb{N}$ and $x,y\in V$. The following result will be instrumental for the proof of (the second statement in) Theorem~\ref{theo:2d}. 
\begin{prop}\label{prop:HeatKernelZ2}
    Let $x = (z, h)$ with $\|z\|_\infty = k$ and $h \in [0,f_\gamma(k)]\cap \mathbb{N}$ be a vertex in $\mathrm{Comb}(\mathbb{Z}^2,f_\gamma)$.  There is a constant $c>0$ such that the following holds:
    \begin{equation*}
        q_t(0, x) \le 
        \begin{cases}
            \frac{c}{t} & t \ge k^2\log^{\gamma}(k)\\
            \frac{c}{k^2\log^\gamma(k)} & t < k^2\log^{\gamma}(k).
        \end{cases}
    \end{equation*}
\end{prop}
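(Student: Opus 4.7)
The plan is to decompose the comb walk into its horizontal projection on $\mathbb{Z}^2$ and its vertical excursions in the teeth, exploit the fact that every tooth in the ball $D(0,k)$ has length at most $L \coloneqq \log^{\gamma}(k \vee 2)$, and reduce the bound on $q_t$ to a Gaussian-type estimate for simple random walk on $\mathbb{Z}^2$. Using the decomposition $X_n = (\tilde Z_n, U_n)$, the horizontal step times $\zeta_m$ from \eqref{eqn:DefHorizontalSteps}, and the projected walk $\tilde X_m \coloneqq \tilde Z_{\zeta_m}$ from \eqref{eq:Horizontal-Walk}, one checks directly that $\tilde X$ is a simple random walk on $\mathbb{Z}^2$, and that conditionally on the trajectory of $\tilde X$, the inter-step durations $V_i$ (from \eqref{eqn:DefVi}) are independent with $\mathbb{E}[V_i\mid \tilde X] \asymp f_\gamma(\tilde X_{i-1})$ and comparable variance, bounded by $CL$ along any trajectory staying inside $D(0,k)$.

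The main step is a concentration inequality of the form
\begin{equation*}
\mathbb{P}_0\Bigl(\Bigl|\sum_{i=0}^{m-1} V_i - \mu_m\Bigr|\geq \tfrac12 \mu_m,\; \tilde X_{[0,m]}\subset D(0,k)\Bigr)\leq \exp(-c\, m/L^{C'}),
\end{equation*}
where $\mu_m \asymp m L$. Conditioning on $\tilde X$, this is a Bernstein-type inequality for a sum of independent bounded-variance random variables (each $V_i \leq C L^2 \log m$ with overwhelming probability via a diffusive exit-time estimate for a tooth of length at most $L$). The occupation-time control of Lemma~\ref{lemma:MomentsOccTime} is used to ensure that $\mu_m$ is indeed of order $m L$: since the number of visits of $\tilde X$ to any single vertex up to time $m$ has second moment $\leq C\log^{4}(m)$, no single deep tooth can dominate the sum $\sum_i f_\gamma(\tilde X_{i-1})$, and the aggregate mean reflects the typical tooth length $L$.

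With this concentration in hand, write
\begin{equation*}
q_t(0,(z,h)) = \frac{1}{\deg(z,h)}\sum_{m \geq 0} \mathbb{P}_0\bigl(\tilde X_m = z,\; \zeta_m \leq t < \zeta_{m+1},\; U_t = h\bigr).
\end{equation*}
The concentration estimate restricts the contributing range of $m$ to $m \asymp t/L$, up to terms that are absorbed using Lemma~\ref{lemma:HorHittingTime} (for exceedingly small $m$) and the Gaussian tail bound above (for exceedingly large $m$). For such $m$, the standard Gaussian heat kernel bound for $\mathbb{Z}^2$ yields $\mathbb{P}(\tilde X_m = z)\leq (C/m)\exp(-c k^2/m)$; meanwhile, conditionally on being in the $z$-tooth at time $t$, $U_t$ is driven by a (reflected) walk on $\{0,\ldots,f_\gamma(z)\}$ run for a positive time, so $\mathbb{P}(U_t = h\mid \cdot)\leq C/L$. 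Summing $\sim t/L$ terms gives
\begin{equation*}
q_t(0,(z,h))\;\leq\;\frac{C}{t}\exp\Bigl(-c\,\frac{k^2 L}{t}\Bigr).
\end{equation*}
When $t\geq k^2 L$ the exponential is $O(1)$, giving the bound $C/t$; when $t < k^2 L$, the function $t \mapsto t^{-1}\exp(-ck^2 L/t)$ is maximized at $t\asymp k^2 L$ where it equals $\asymp 1/(k^2 L)$, yielding the second bound.

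The hardest part is the concentration of $\sum V_i$: the $V_i$ are not i.i.d.\ since their laws depend on $\tilde X_{i-1}$, and the total variance is governed by the local times of $\tilde X$ at the deepest visited teeth; this is precisely where Lemma~\ref{lemma:MomentsOccTime} plays a decisive role, as it rules out the scenario where a few unusually long excursions at rare vertices spoil the approximately linear growth of $\sum_{i<m}V_i$.
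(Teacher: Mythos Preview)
Your strategy shares the key conceptual decomposition with the paper (horizontal projection $\tilde X$ on $\mathbb{Z}^2$, vertical excursions $V_i$, concentration of $\sum V_i$), but it differs structurally and has a real gap.

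The paper does \emph{not} bound $q_t(0,x)$ directly. It first proves the on-diagonal bound $q_t(x,x)\le c/t$ for $x=(z,0)$ with $\|z\|_\infty=k$ (Lemma~\ref{lemma:HeatKernelZ2k}), by controlling the truncated Green function $g_{[t/2,t]}(x,x)$ on the events $A,B$ of~\eqref{eqn:LargeDeviationEvents}; then passes to $q_t(0,x)$ via Cauchy--Schwarz (Lemma~\ref{lemma:HeatZ2CS}); then recovers the $h$-dependence by time-reversal and the ballot estimate~\eqref{eqn:GeneralBallot} (Lemma~\ref{lemma:HKGaussZ2}); and treats $t<k^2\log^\gamma(k)$ by a separate exit-time argument (Lemmas~\ref{lemma:HorExitCombZ2},~\ref{lemma:SmallTimeZ2}), not by optimizing a Gaussian. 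The reason for this indirection is precisely the gap in your argument.

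The gap is the lower bound $\mu_m\ge c\,mL$ with $L=\log^\gamma(k)$, for the walk started at $0$. Near the origin the teeth have length $O(1)$, so on any trajectory that lingers in $B_\infty(0,k^{o(1)})$ one has $\mu_m\ll mL$; nothing you write rules this out. Your appeal to Lemma~\ref{lemma:MomentsOccTime} addresses the wrong obstruction: it controls how often a \emph{single} vertex is visited and hence prevents one deep tooth from dominating the sum \emph{upwards}, but the danger here is the opposite --- the sum can be too \emph{small} because many steps are taken at short teeth. The paper confronts exactly this issue in Proposition~\ref{prop:ChernovBound} (event $B^c$) via an explicit occupation-time bound on $B_\infty(0,t^{1/3})$, and crucially does so for the walk started at $x$ (where the surrounding teeth already have length $\asymp L$), which is what the diagonal/Cauchy--Schwarz detour buys. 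A second unjustified step is $\mathbb{P}(U_t=h\mid\cdot)\le C/L$: equidistribution on the tooth holds only after time $\gg L^2$ spent there, which is not uniform over the random residual time $t-\zeta_m$; the paper obtains the $h$-dependence by an entirely different route.
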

The proof of this proposition is carried out in several intermediate steps. We start with the following lemma.

\begin{lem}\label{lemma:GreenFunctionUpper}
    For all $t \ge 1$ and $x \in V$, we have that
    \begin{equation*}
        q_t(x, x) \le \frac{4}{t} g_{[t/2, t]}(x, x).
    \end{equation*}
\end{lem}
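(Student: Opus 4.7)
The plan is to exploit the parity monotonicity of the diagonal heat kernel stated in~\eqref{eq:Heat-kernel-decreasing}, namely that for every $x \in V$ the sequences $n \mapsto p_{2n}(x,x)$ and $n \mapsto p_{2n+1}(x,x)$ are both non-increasing. The bound we want has the flavour ``the heat kernel at time $t$ cannot be much larger than its time-average over $[t/2,t]$'', and this is exactly the kind of statement that follows from a monotonicity along fixed-parity subsequences.

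Concretely, I would start from the definition
\[
g_{[t/2,t]}(x,x) \;=\; \sum_{n = \lceil t/2 \rceil}^{t} p_n(x,x),
\]
restrict the sum to those indices $n \in [t/2,t]$ having the same parity as $t$, and use~\eqref{eq:Heat-kernel-decreasing} to obtain $p_n(x,x) \ge p_t(x,x)$ for each such $n \le t$. Letting $N_t$ denote the number of integers $n \in [\lceil t/2 \rceil, t]$ with $n \equiv t \pmod{2}$, this gives
\[
g_{[t/2,t]}(x,x) \;\ge\; N_t \cdot p_t(x,x).
\]
A direct count (splitting according to the residue of $t$ modulo $4$, and analogously for odd $t$) yields $N_t \ge t/4$, and rearranging then produces the claimed inequality $q_t(x,x) = p_t(x,x) \le (4/t)\, g_{[t/2,t]}(x,x)$. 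For small values of $t$ where $\lfloor t/4 \rfloor$ is not sharp, the inequality can be verified trivially by noting $q_t(x,x) \le 1 \le (4/t)\, g_{[t/2,t]}(x,x)$ after absorbing the $n=t$ term alone.

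I do not anticipate a real obstacle here: the only ingredient beyond definitions is~\eqref{eq:Heat-kernel-decreasing}, and the only book-keeping is the parity count that produces the constant $4$. In particular, no specific structure of the comb graph is used, so the bound holds on any locally finite graph.
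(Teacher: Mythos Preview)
Your approach is essentially identical to the paper's, which likewise invokes~\eqref{eq:Heat-kernel-decreasing} and bounds $q_t(x,x)$ by its average over $s \in [t/2,t]$; the paper's proof is in fact terser and less careful about parity than yours. One small caveat: for odd $t$, the inequalities in~\eqref{eq:Heat-kernel-decreasing} as stated do not give monotonicity along \emph{odd} times, so rather than restricting to $n$ of the same parity as $t$ you should always restrict to even $n$ (using $p_t(x,x) \le p_{t-1}(x,x)$ first when $t$ is odd); this is a trivial bookkeeping fix and the constant $4$ survives.
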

\begin{proof}
    We note that by~\eqref{eq:Heat-kernel-decreasing}, $q_t(x, x)$ is decreasing as a function of $t$ (even), thus
    \begin{equation*}
        q_t(x, x) \le \frac{4}{t} \sum_{s = t/2}^t q_s(x, x) =\frac{4}{t}g_{[t/2, t]}(x, x),
    \end{equation*}
     proving the claim.
\end{proof}

The next lemma is the main ingredient in the proof of Proposition~\ref{prop:HeatKernelZ2}.

\begin{lem}\label{lemma:HeatKernelZ2k} 
    Fix $\varepsilon>0$. There exists a constant $c > 0$ such that, for every $x = (z, 0) \in V$ with $\|z\|_\infty = k$, for all $t> k^{\varepsilon}$, one has
    \begin{equation*}
        q_t(x, x) \le \frac{c}{t}.
    \end{equation*}
\end{lem}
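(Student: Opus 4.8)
The plan is to apply the general bound~\eqref{eq:Bound-HK-via-Green}, $p_n(x,x) \le \frac{2 g_B(x,x)}{n \mathbb{P}_x(T_B \ge n)}$, with a carefully chosen set $B$ that captures the right diffusive scale. For a point $x = (z,0)$ with $\|z\|_\infty = k$, the teeth attached to the ball $B_\infty(z, \sqrt t)$ in $\mathbb{Z}^2$ all have height at most a power of $\log$ of the relevant radius, hence are negligible compared with the diffusive scale $\sqrt t$. The natural choice is $B = D(z, r)$, the comb-ball over the $\mathbb{Z}^2$-ball of radius $r \asymp \sqrt t$ (with all teeth attached), so that by~\eqref{eq:Green-Eff-Res} and network reduction one has $g_{D(z,r)}(x,x) = R^G_{\mathrm{eff}}(x, D(z,r)^c) \le C\log r + R^{\mathbb{Z}^2}_{\mathrm{eff}}(z, B_\infty(z,r)^c) \le C \log r + C \log r \le C\log t$, using the logarithmic growth of the two-dimensional effective resistance and the fact that the tooth at $x$ itself contributes at most $f_\gamma(k) \le C\log^\gamma k \le C\log t$ once $t > k^\varepsilon$. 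Since we aim for $q_t(x,x) \le c/t$, and $\frac{g_B(x,x)}{t} \asymp \frac{\log t}{t}$, the bound~\eqref{eq:Bound-HK-via-Green} gives a spurious logarithmic factor; so I will instead use Lemma~\ref{lemma:GreenFunctionUpper}, $q_t(x,x) \le \frac{4}{t} g_{[t/2,t]}(x,x)$, and bound the truncated Green kernel $g_{[t/2,t]}(x,x) = \sum_{s=t/2}^t q_s(x,x)$ directly.

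To control $g_{[t/2,t]}(x,x)$, the idea is to split the return time into the time spent making horizontal moves (on the $\mathbb{Z}^2$ skeleton) and the time spent in vertical tooth-excursions. The key input is a concentration estimate for $\sum_{i=1}^m V_i$, the total time elapsed after $m$ horizontal steps, where $V_i = \zeta_{i+1} - \zeta_i$ are the inter-horizontal-step durations (recall~\eqref{eqn:DefVi}). Each $V_i$ is of the form $1 + (\text{a vertical excursion in a tooth of height} \le C\log^\gamma(\text{radius}))$; conditioned on the sequence of horizontal positions, the $V_i$ are independent, each with mean $O(\log^\gamma)$ up to the radius explored, which is $\ll \sqrt t$. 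Using Lemma~\ref{lemma:MomentsOccTime}-type second-moment bounds on occupation times of $\mathbb{Z}^2$, together with a union bound, one shows that with overwhelming probability the walk $X$ has made $\asymp t/\mathrm{polylog}(t)$ horizontal steps by time $t$, and that $\tilde X$ (the induced walk on $\mathbb{Z}^2$, recall~\eqref{eq:Horizontal-Walk}) behaves diffusively, so $X_s = x$ with $s \in [t/2,t]$ forces $\tilde X$ to return to $z$ at a time comparable to $t/\mathrm{polylog}(t)$. The two-dimensional local CLT then gives $\mathbb{P}(\tilde X_j = z) \asymp 1/j$, and summing the return probabilities of $\tilde X$ over the relevant window, weighted by the probability that the extra vertical time lands $X$ exactly in $[t/2,t]$, yields $g_{[t/2,t]}(x,x) \le C$ — the polylog from counting return opportunities is cancelled by the polylog suppression in the number of horizontal steps. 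Plugging into Lemma~\ref{lemma:GreenFunctionUpper} gives $q_t(x,x) \le c/t$.

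Concretely, the steps I would carry out are: (i) fix $r = \lceil t^{1/2}\rceil$ (or a slightly larger power to absorb logs) and record $g_{D(z,r)}(x,x) \le C\log t$ via effective-resistance reduction; (ii) prove a large-deviation lower bound on $L_{x,r}$ (the number of horizontal steps before exiting $D(z,r)$), showing $\mathbb{P}_x(L_{x,r} < t/\log^{A}t)$ is super-polynomially small for suitable $A$, by comparing with a binomial as in the proof of Lemma~\ref{lemma:ExitCombLDP}; (iii) condition on the horizontal skeleton walk $\tilde X$ and express $q_s(x,x)$ for $s \in [t/2,t]$ as $\sum_{j} \mathbb{P}(\tilde X_j = z, \zeta_j \le s < \zeta_{j+1})$-type terms, then sum over $s \in [t/2,t]$; (iv) use the occupation-time second moment (Lemma~\ref{lemma:MomentsOccTime}) and the 2D local CLT to bound $\sum_{s=t/2}^t q_s(x,x) \le C$; (v) conclude via Lemma~\ref{lemma:GreenFunctionUpper}. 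The main obstacle is step (iv): making the cancellation between the $\log$-many return times of the skeleton walk and the $\log$-many horizontal steps precise, i.e.\ showing that the vertical excursion times, although individually as large as $\mathrm{polylog}(t)$, are sufficiently concentrated that they do not smear the return-time distribution enough to destroy the $1/t$ decay. This requires the quantitative concentration of $\sum V_i$ around its mean $\asymp m\,\mathrm{polylog}$, and matching the window $[t/2,t]$ against the resulting fluctuations — this is precisely where the hypothesis $t > k^\varepsilon$ (so that all explored teeth are uniformly of polylog height in $t$) is used.
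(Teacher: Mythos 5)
Your proposal follows essentially the same strategy as the paper: reduce via Lemma~\ref{lemma:GreenFunctionUpper} to showing $g_{[t/2,t]}(x,x)\le C$, decompose according to whether the number of horizontal steps before time $t$ is ``typical'', and on the typical event compare to the truncated $\mathbb{Z}^2$ Green kernel over a window of bounded ratio so that the logarithm cancels. This is exactly the mechanism the paper uses. However, there are two concrete gaps in the plan as written.

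First, step (ii) only proposes a \emph{lower} bound on the number of horizontal steps $L_{x,r}$. The cancellation in step (iv) actually requires a two-sided estimate: the paper's events $A$ (at least $c_2\frac{t}{2}\log^{-\gamma}(t)$ horizontal steps by time $t/2$) and $B$ (at most $c_3 t \log^{-\gamma}(t)$ horizontal steps by time $t$) together force the relevant horizontal-time window to be $[t_1,t_2]$ with $t_2/t_1 = O(1)$, which is what makes $g^{\mathbb{Z}^2}_{[t_1,t_2]}(z,z)\le C$. Without the upper bound $B$ the relevant horizontal indices could range up to order $t$, and the 2D Green kernel over $[\,t/\mathrm{polylog}(t),\,t\,]$ is of order $\log\log t$, not $O(1)$; the argument would not close. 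Your narrative paragraph says ``$\asymp t/\mathrm{polylog}(t)$ horizontal steps'', implicitly acknowledging both directions, but this does not appear in the concrete steps, and getting the upper bound requires a separate Chernoff-type argument on the excursion sums $\sum V_j$ against their lower mean $c_6\log^\gamma(t)$ (Proposition~\ref{prop:ChernovBound} of the paper).

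Second, the tools are mislocated. The concentration of the number of horizontal steps is proved in the paper via moment bounds on the vertical excursion lengths $\tau_z$ (Lemma~\ref{lemma:MomentsInTooth}) plus Chebyshev/Chernoff, not via ``Lemma~\ref{lemma:MomentsOccTime}-type second-moment bounds on occupation times of $\mathbb{Z}^2$''. The occupation-time second moment $\mathbb{E}[U_t^2]\le C\log^4 t$ is used only for the atypical contribution on $B^c$, where it is combined with Cauchy--Schwarz and the polynomial decay $\mathbb{P}_x(B^c)\le Ct^{-c}$. Your step (i) estimate $g_{D(z,r)}(x,x)\le C\log t$ likewise has a role (it controls the $A^c$ contribution via the simple Markov property), but only because it is paired with a polynomial decay of $\mathbb{P}_x(A^c)$; ``super-polynomial'' decay is overclaiming relative to what Proposition~\ref{prop:ChernovBound} provides or needs. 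You have correctly identified step (iv) as the crux, but leave it as an open obstacle; the $A\cap B$ decomposition together with the moment bounds on $V_j$ is precisely what resolves it and must be supplied for a complete proof.
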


\begin{proof} Throughout the proof we consider a fixed $x = (z,0)$ with $\|z\|_\infty = k$. Our first and main goal is to show that there exists $c>0$ so that, for all $t>k^\varepsilon$, $g_{[t/2, t]}(x, x) \le c$. The main result then immediately follows from this bound and Lemma~\ref{lemma:GreenFunctionUpper}. We use the notation introduced in \eqref{eqn:DefHorizontalSteps} (and below) concerning the time between horizontal steps. Let us define the random variable
\begin{equation}
    \mathrm{Hor}_{n} \coloneqq \sup\{k \ge 0\colon \zeta_k \le n\}, \qquad n \in \mathbb{N},
\end{equation}
which corresponds to the total number of horizontal steps occurring before time $n$. We introduce (for $t \geq 2$) the following (typical) events:
\begin{equation}\label{eqn:LargeDeviationEvents}
        \begin{split}
            A &\coloneqq \{\mathrm{Hor}_{t/2} \ge c_2\frac{t}{2} \log^{-\gamma}(t) \} \\
            &= \left\{X \text{ makes at least } c_2\frac{t}{2} \log^{-\gamma}(t) \text{ horizontal steps before time }\frac{t}{2} \right\}, \text{ and}\\
            B &\coloneqq \{\mathrm{Hor}_{t} \le c_3 t \log^{-\gamma}(t) \} \\
            & = \left\{X \text{ makes at most } c_3 t \log^{-\gamma}(t) \text{ horizontal steps before time }t \right\},
        \end{split}
    \end{equation}
    with $c_2, c_3>0$ two deterministic constants whose values will be fixed later. For any $t \geq 2$ we then use the decomposition
    \begin{equation*}
        \mathbb{E}_{x}\left[ \sum_{s = t/2}^{t} \mathds{1}\{X_s = x\} \right] = \mathbb{E}_{x}\left[ \sum_{s = t/2}^{t} \mathds{1}\{X_s = x\} \mathds{1}\{A\} \right] + \mathbb{E}_{x}\left[ \sum_{s = t/2}^{t} \mathds{1}\{X_s = x\} \mathds{1}\{A^c\} \right].
    \end{equation*} 
    We first focus on the second term, and observe that $\mathds{1}\{A^c\}$ depends only on the first $t/2$ steps of the random walk. On the other hand, $\sum_{s = t/2}^{t} \mathds{1}\{X_s = x\}$ depends only on the position of the walk at times in the interval $[t/2, t] \cap \mathbb{N}$. Hence we find, by the simple Markov property, that
    \begin{equation}
    \label{eq:Bound-Green-fct-A}
        \mathbb{E}_{x}\left[ \sum_{s = t/2}^{t} \mathds{1}\{X_s = x\} \mathds{1}\{A^c\} \right] \le \mathbb{P}_x \left( A^c \right) \sup_{y \in G} \mathbb{E}_{y}\left[ \sum_{s = 0}^{t/2} \mathds{1}\{X_s = x\}\right].
    \end{equation}
    Clearly, by definition,
    \begin{equation}
        \label{eq:Bound-Green-fct-uniform}
        \sup_{y \in G} \mathbb{E}_{y}\left[ \sum_{s = 0}^{t/2} \mathds{1}\{X_s = x\}\right] \leq C \sup_{y \in G} g_{t/2}(y, x),
    \end{equation}
    and using~\eqref{eq:Green-function-maximal}, we have
    $\sup_{y \in G} g_t(y, x) \le g_t(x, x)$. It is not difficult to see that, for some $C>0$
    \begin{equation}
        \label{eq:Bound-Green-Z^2-log}
        g_{t/2}(x, x) \le C R^{\mathbb{Z}^2}_{\mathrm{eff}}(z, B_{\mathbb{Z}^2}(z, t)^c) \le C \log(t).
    \end{equation}
    Using this together with Proposition~\ref{prop:ChernovBound} below (which in particular it states that $\mathbb{P}_x \left( A^c \right)$ is at most $t^{-c'}$ for some $c'>0$) we obtain
    \begin{equation*}
        \mathbb{E}_{x}\left[ \sum_{s = t/2}^{t} \mathds{1}\{X_s = x\} \mathds{1}\{A^c\} \right] \le \mathbb{P}_x \left( A^c \right) \sup_{y \in G} \mathbb{E}_{y}\left[ \sum_{s = 0}^{t/2} \mathds{1}\{X_s = x\}\right] \le \frac{C}{t^c}.
    \end{equation*}
    Hence, we are left with the task of bounding 
    \begin{equation*}
        \mathbb{E}_{x}\left[ \sum_{s = t/2}^{t} \mathds{1}\{X_s = x\} \mathds{1}\{A\} \right] \leq \mathbb{E}_{x}\left[ \sum_{s = t/2}^{t} \mathds{1}\{X_s = x\} \mathds{1}\{A \cap B\} \right] + \mathbb{E}_{z}\left[ \sum_{s = t/2}^{t} \mathds{1}\{X_s = x\} \mathds{1}\{ B^c\} \right].
    \end{equation*}
    We observe that, using the Cauchy-Schwarz inequality
    \begin{equation}\label{eqn:CauchySchwarzErdos}
    \begin{split}
        \mathbb{E}_{x}\left[ \sum_{s = t/2}^{t} \mathds{1}\{X_s = x\} \mathds{1}\{ B^c\} \right] &\le \mathbb{E}_{x}\left[ \left(\sum_{s = t/2}^{t} \mathds{1}\{X_s = x\}\right)^{2} \right]^{1/2}\mathbb{P}_{x}\left(B^c\right)^{1/2} \\
        &\le \frac{C}{t^c} \mathbb{E}_{x}\left[ \left(\sum_{s = t/2}^{t} \mathds{1}\{X_s = x\}\right)^{2} \right]^{1/2},
    \end{split}
    \end{equation}
    again by applying Proposition~\ref{prop:ChernovBound}. For $z \in \mathbb{Z}^2$, we introduce the family of random variables 
    \begin{equation}\label{eqn:Bzed}
    \begin{split}
    B_z &\coloneqq |\{ 0 \le k \le \zeta_1 \, \colon \, X_{k} = z \}|\\
        &=\text{number of visits to }z \text{ between two consecutive horizontal steps}.
    \end{split}
    \end{equation} 
    Note that, under $\mathbb{P}_{(z,0)}$, these random variables are i.i.d.~and follow a $\text{Geom}(4/5)$-distribution on $\mathbb{N} \setminus \{0\}$ (with the exception of at most all $z$ in the box $B_\infty(0,3)$ around the origin). We furthermore consider (possibly upon enlarging the probability space) i.i.d.~copies of $B_z$, denoted for $i \in \mathbb{N}$ by $B^i \coloneqq \{B_{z}^{i}\}_{i \ge 1}$, which are independent of the realization of the horizontal component $\tilde{X}$ of the walk $X$ (recall~\eqref{eq:Horizontal-Walk}). 

    With this we obtain (recalling that  $x = (z,0)$):
    \begin{equation*}
        \mathbb{E}_{x}\left[ \left(\sum_{s = t/2}^{t} \mathds{1}\{X_s = x\}\right)^{2}\right] \le \mathbb{E}_{x}\left[ \left(\sum_{i = 0}^{t} \mathds{1}\{\tilde{X}_i = z\} B_{z}^{i} \right)^{2}\right].
    \end{equation*}
    We remark that, on the right-hand side, $i\in [0, t]$ should be interpreted as the time naturally associated with the horizontal walk $\tilde{X}$ (i.e.~counting only the time steps when a horizontal step is made by $X$). 
    
    Using the independence of $\mathds{1}\{\tilde{X}_i = z\} \text{ and } B_{z}^{i}$ for any $0 \leq i \leq t$, we obtain
    \begin{equation*}
        \mathbb{E}_{x}\left[ \left(\sum_{s = t/2}^{t} \mathds{1}\{X_s = x\}\right)^{2}\right] \le C \mathbb{E}_{x}\left[ \left(\sum_{i = 0}^{t} \mathds{1}\{\tilde{X}_i = z\} \right)^{2}\right] \le C \log(t)^4,
    \end{equation*}
    where we applied Lemma~\ref{lemma:MomentsOccTime}. Upon insertion of the latter display into \eqref{eqn:CauchySchwarzErdos}, we find that the right hand side of the latter is bounded above by $Ct^{-c}$, for two constants $C, c>0$. 
    
    We are now left with the task of bounding from above the quantity
    \[\mathbb{E}_{x}\left[ \sum_{s = t/2}^{t} \mathds{1}\{X_s = x\} \mathds{1}\{A \cap B\} \right].\]
    We have
    \begin{align*}
        \mathbb{E}_{x}\left[ \sum_{s = t/2}^{t} \mathds{1}\{X_s = x\} \mathds{1}\{A \cap B\} \right] &\le \mathbb{E}_{x}\left[ \sum_{i = c_2 \log^{-\gamma}(t)t/2}^{c_3 \log^{-\gamma}(t)t} B^i_z \mathds{1}\{\tilde{X}_i = z\} \right]\\
        &= \mathbb{E}_x\left[\sum_{i = c_2 \log^{-\gamma}(t)t/2}^{c_3 \log^{-\gamma}(t)t} \mathds{1}\{\tilde{X}_i = z\}\right] \mathbb{E}_{x}\left[ B^1_z \right]\\
        &\le Cg_{[t_1, t_2]}^{\mathbb{Z}^2}(z, z) \mathbb{E}_{x}\left[ B^1_z \right],
    \end{align*}
    with the notation $t_1 \coloneqq c_2 \log^{-\gamma}(t)t/2$ and $t_2 \coloneqq c_3 \log^{-\gamma}(t)t$. 
    
We finally observe that, by standard heat kernel estimates for the random walk on $\mathbb{Z}^2$ (see, e.g.,~\cite[Theorem 6.28]{Barbook}), for some $C, c, \hat{c} > 0$, 
\begin{equation*}
    g_{[t_1, t_2]}^{\mathbb{Z}^2}(z, z) \le C \int_{t_1}^{t_2} \frac{c}{s} ds = C\log\left(\frac{c_3 \log^{-\gamma}(t)t}{c_2 \log^{-\gamma}(t)t/2}\right) \le \hat{c}.
\end{equation*}
This concludes the proof.
\end{proof}
\subsection{Concentration inequalities for the time spent in the teeth}
In this subsection, we show that the probability of each of the events $A^c$ and $B^c$, where $A$ and $B$ are defined in~\eqref{eqn:LargeDeviationEvents}, admits a polynomial decay. This is done in Proposition~\ref{prop:ChernovBound} below. We start with a preparatory lemma concerning the time spent in a tooth.
\begin{lem}\label{lemma:MomentsInTooth}
    Consider $z \in \mathbb{Z}^2 \setminus B_\infty(0,3)$ such that $\|z\|_\infty = k$ and recall the notation introduced in \eqref{eqn:Tau}.
    Then there exist constants $c_4, c_5 \in (0, \infty)$ with $c_4<c_5$ such that, for every $y \in V$,
    \begin{equation*}
        c_4 \log^\gamma(k) \le \mathbb{E}_y\left[\tau_z\right] \le c_5 \log^\gamma(k)
    \end{equation*}
    and
    \begin{equation*}
        c_4 \log^{2\gamma}(k) \le \mathbb{E}_y\left[\tau_z^2\right] \le c_5 \log^{4\gamma}(k).
    \end{equation*}
\end{lem}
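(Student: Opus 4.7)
The plan is to reduce to starting at the base of the tooth, decompose $\tau_z$ as a geometric sum of i.i.d.~one-dimensional excursion times, and then compute or bound the relevant moments. Since $\tau_z = \zeta_1 \circ \theta_{H_{(z, 0)}}$ and $G = \mathrm{Comb}(\mathbb{Z}^2, f_\gamma)$ is recurrent (so that $H_{(z,0)} < \infty$ almost surely under $\mathbb{P}_y$ for every $y \in V$), the strong Markov property immediately reduces the proof to the case $y = (z, 0)$.

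At $(z, 0)$ the random walk has degree $5$ (four horizontal neighbors in $\mathbb{Z}^2$ plus the vertex $(z, 1)$ in the tooth, since $L \coloneqq f_\gamma(z) = \lfloor \log^\gamma(k) \rfloor \geq 1$ for $k$ large). Thus the number $M$ of vertical excursions preceding the first horizontal step is geometrically distributed on $\{0, 1, 2, \ldots\}$ with success probability $4/5$, and each excursion takes $1 + T_i$ steps, where the $T_i$ are i.i.d.~copies (independent of $M$) of the first return time to $0$ of the simple random walk on $\{0, 1, \ldots, L\}$ started from $1$ and reflected at $L$. This yields the representation $\tau_z = 1 + M + \sum_{i=1}^{M} T_i$ (with the empty sum interpreted as $0$).

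Next I would compute the moments of $T$ by standard one-dimensional techniques. Solving the linear recurrence for $h(x) \coloneqq \mathbb{E}_x[T_0]$ with $h(0) = 0$, $h(L) = 1 + h(L-1)$, and $h(x) = 1 + (h(x-1) + h(x+1))/2$ for $1 \leq x \leq L - 1$ yields the explicit formula $h(x) = x(2L - x)$, whence $\mathbb{E}[T] = h(1) = 2L - 1 \asymp L$. For the second moment, since $\max_x \mathbb{E}_x[T_0] = h(L) = L^2$, Markov's inequality combined with an iteration of the strong Markov property at multiples of $2L^2$ gives the tail bound $\mathbb{P}[T > sL^2] \leq C e^{-cs}$ for all $s \geq 1$; integrating this tail produces the crude (but sufficient) estimate $\mathbb{E}[T^2] \leq C L^4$.

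Combining these ingredients via Wald's identity and its second-moment analogue, together with the fact that $\mathbb{E}[M]$ and $\mathbb{E}[M^2]$ are positive absolute constants, one obtains $\mathbb{E}[\tau_z] \asymp L$ and $\mathbb{E}[\tau_z^2] \leq C L^4$. The lower bound on the second moment then comes for free from Jensen's inequality: $\mathbb{E}[\tau_z^2] \geq (\mathbb{E}[\tau_z])^2 \geq c L^2$. Substituting $L \asymp \log^\gamma(k)$ yields the stated bounds. I do not foresee any serious obstacle, since the analysis on a single tooth is completely explicit; the asymmetry between the $\log^{2\gamma}(k)$ lower bound and the $\log^{4\gamma}(k)$ upper bound on the second moment simply reflects the use of the crude $\mathbb{E}[T^2] \leq C L^4$ instead of the sharper truth $\mathbb{E}[T^2] \asymp L^3$, the latter requiring a second-order recurrence that is unnecessary for the downstream applications.
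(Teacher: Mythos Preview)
Your proposal is correct and follows essentially the same approach as the paper: reduce to $y=(z,0)$ by the strong Markov property, decompose $\tau_z$ as a geometric number of i.i.d.~one-dimensional excursions on the segment $\{0,\dots,L\}$, bound the excursion moments, and use Jensen for the second-moment lower bound. The only (minor) difference is in how the one-dimensional moments are handled: you solve the recurrence for $\mathbb{E}_x[T_0]$ explicitly to get $h(1)=2L-1$, whereas the paper bounds the first moment from above via the commute-time identity and from below via a gambler's-ruin argument; for the tail of $T$, you invoke Markov's inequality on $\max_x\mathbb{E}_x[T_0]=L^2$ and iterate, while the paper instead shows by a diffusivity/local-limit estimate that the walk hits $0$ in each time window of length $L^2$ with uniformly positive probability---both yield the same exponential tail and the same $\mathbb{E}[T^2]\le CL^4$.
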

\begin{proof}
Recall that $X_n = (\tilde{Z}_n, U_n)$ for $n \ge 0$. By the recurrence of $X$ we know that, $\mathbb{P}_y$-almost surely, $H_{(z,0)}<\infty$. 
 Using the strong Markov property at $H_{(z,0)}$ we observe that the random variable $\tau_z$ admits the representation (possibly by enlarging the original probability space to include the random variables $\{\sigma^i_0\}_{i \ge 1}$ described below)
\begin{equation*}
    \tau_z \stackrel{(\mathrm{d})}{=} B_z + \sum_{i = 1}^{B_z-1} \sigma^i_0,
\end{equation*}
where $\stackrel{(\mathrm{d})}{=}$ denotes equality in distribution, $B_z \stackrel{(\mathrm{d})}{=} \mathrm{Geom}(4/5)$ is as in~\eqref{eqn:Bzed}, and $\{\sigma^i_0\}_{i \ge 1}$ are  i.i.d.~random variables independent of $B_z$, equal in law to
\begin{equation}
\label{eq:sigma-0-def}
    \sigma_0 \coloneqq \inf\{ n \geq 1 \, \colon \, U_n = 0 \} \text{ under }\mathbb{P}_{(z,1)}.
\end{equation}
Here, $\sigma_0$ should be interpreted as the time spent in the tooth centered at $z$ until the vertical component $(U_n)_{n \geq 0}$ of the random walk $X$ reaches zero (recall again the notation below~\eqref{eqn:ExitHorBalls}), being counted under $\mathbb{P}_{(z,1)}$ starting after the first step the random walk takes in the corresponding tooth. Note that the process $\{U_{n \wedge \sigma_0} \, : \, n \in \mathbb{N} \}$ under $\mathbb{P}_{(z,1)}$ has the law of a simple random walk under $\mathbb{P}^{\mathcal{I}_k}_1$ on the discrete interval 
    \begin{equation}
    \label{eq:1DTooth-def}
        \mathcal{I}_k \coloneqq [0,\log^{\gamma}(k)]\cap \mathbb{N},
    \end{equation}
stopped at $0$. Moreover, for any $y \in V$ and $z \in \mathbb{Z}^2 \setminus B_\infty(0,3)$, we have that
\begin{equation}\label{eq:ExpLowerBoundteeth}
    \mathbb{E}_y\left[\tau_z\right] = \mathbb{E}_y[B_z] + \mathbb{E}_y[B_z-1] \mathbb{E}_{(z,1)}[\sigma_0] = \mathbb{E}_y[B_z] + \mathbb{E}_y[B_z-1] \mathbb{E}^{\mathcal{I}_k}_1[H_0] = \frac{5}{4} + \frac{1}{4} \mathbb{E}^{\mathcal{I}_k}_1[H_0].
\end{equation}
Using Jensen's inequality we see that
\begin{equation*}
    \mathbb{E}_y\left[\tau_z^2\right] \ge \mathbb{E}_y\left[\tau_z\right]^2 \ge c \mathbb{E}^{\mathcal{I}_k}_1[H_0]^2,
\end{equation*}
and furthermore, using the law of total expectation and the Cauchy-Schwarz inequality (and the fact that the random variables involved are integer-valued) we obtain
\begin{equation*}
    \mathbb{E}_y\left[\tau_z^2\right]  \le \mathbb{E}_y[B_z^2] \mathbb{E}_{(z,1)}[(1+\sigma_0)^2] = \mathbb{E}_y[B_z^2] \mathbb{E}^{\mathcal{I}_k}_1[(1+H_0)^2] \le C \mathbb{E}^{\mathcal{I}_k}_1[H_0^2].
\end{equation*}
Therefore we can restrict our attention to studying the random walk on $\mathcal{I}_k$ started at $1$.

    Using a Gambler's ruin estimate, we further bound from below the expectation on the right-hand side of the equality in~\eqref{eq:ExpLowerBoundteeth} by
    \begin{align*}
     \mathbb{E}_1^{\mathcal{I}_k}\left[H_0\right]&\ge 
        \mathbb{E}^{\mathcal{I}_k}_1[H_0  \mathds{1}\{H_{\log^\gamma(k)} < H_0\} ] \\
        &\geq \frac{1}{\log^{\gamma}(k)}\mathbb{E}^{\mathcal{I}_k}_{\log^{\gamma}(k)}\left[H_0\right]\\
        &\geq \frac{1}{\log^{\gamma}(k)}\mathbb{E}^{\mathcal{I}_k}_{\log^{\gamma}(k)}\left[H_0\mathbbm{1}\{H_0\geq \log^{2\gamma}(k)\}\right]\\
        &\geq \log^{\gamma}(k)\mathbb{P}^{\mathcal{I}_k}_{\log^{\gamma}(k)}(H_0\geq \log^{2\gamma}(k))\geq \frac{\log^{\gamma}(k)}{C}, 
    \end{align*}
    for some constant $C>0$ which does not depend on $k$. This establishes the desired lower bound on both the first and second moment of $H_0$ under $\mathbb{P}^{\mathcal{I}_k}_1$ and hence of $\tau_z$ under $\mathbb{E}_y$.
    Now we move to the upper bounds, and start with that of $H_0^2$. Note that, for a positive integer $L=L(k)$ to be specified later, we can write
    \[
    \mathbb{E}_1^{\mathcal{I}_k}\left[H_0^2\right]\leq L^2 + \mathbb{E}_1^{\mathcal{I}_k}\left[H_0^2\mathbbm{1}_{\{H_0^2\geq L^2\}}\right]
       \leq 2L^2+\sum_{j> L}\mathbb{P}_1^{\mathcal{I}_k}(H_0>j),
    \]
    where for the last inequality we have used that, for a non-negative, integer-valued random variable $Z$ under a probability measure $\mathbb{P}$ and $h\in \mathbb{N}$, it holds that
\begin{equation}\label{trickpernonegativa}
        \mathbb{E}[Z\mathbbm{1}_{\{Z\geq h\}}]=h\mathbb{P}(Z>h)+\sum_{j>h}\mathbb{P}(Z>j).
    \end{equation}
    Assume that $L\geq \log^{2\gamma}(k)$ and set (ignoring the dependence on $k$ in the notation) $A_j\coloneqq j\log^{-2\gamma}(k)$ (as usual, we keep flooring and ceiling implicit in the notation).
    Clearly, 
    \[
    \mathbb{P}_{1}^{\mathcal{I}_k}(H_0>j)\leq \mathbb{P}_{1}^{\mathcal{I}_k}(H_0>A_j\log^{2\gamma}(k)).
    \]
    We now split the time window $\{1,\dots, A_j\log^{2\gamma}(k)\}$ under consideration into $A_j$ sub-intervals (only possibly overlapping on their respective start- and endpoints), each of length $\log^{2\gamma}(k)$: 
    \[
    \{1,\dots, A_j\log^{2\gamma}(k)\}=\bigcup_{i=1}^{A_j}\{(i-1)\log^{2\gamma}(k),\dots, i\log^{2\gamma}(k)\}\eqqcolon \bigcup_{i=1}^{A_j}I_i.
    \]
    To avoid confusion with the walk $X$ on the comb we denote the canonical process on $\mathcal{I}_k$ or $\mathbb{Z}$ by $W$.
    With this, we see that $W$ stays above zero for $A_j\log^{2\gamma}(k)$ steps if, and only if, it stays above zero in each $I_i$ ($1 \leq i \leq A_j$). Writing $\mathcal{F}_h = \sigma(W_n \, \colon \, 0 \leq n \leq h)$ for $h \in \mathbb{N}$, we find 
\begin{equation}
\begin{split}\label{eq:iterate}
        \mathbb{P}_{1}^{\mathcal{I}_k}&(H_0>A_j\log^{2\gamma}(k))=\mathbb{P}_{1}^{\mathcal{I}_k}\Big(\bigcap_{i=1}^{A_j}\{W_n>0 \text{ }\forall n\in I_i\}\Big)\\
        &=\mathbb{E}_{1}^{\mathcal{I}_k}\Big[\mathds{1}\Big\{\bigcap_{i=1}^{A_j-1}\{W_n>0 \text{ }\forall n\in I_i\}\Big\}\mathbb{P}_{1}^{\mathcal{I}_k}\Big(W_n>0 \text{ }\forall n\in I_{A_j}\mid \mathcal{F}_{(A_j-1)\log^{2\gamma}(k)}\Big)\Big].
        \end{split}
    \end{equation}
    In view of the fact that $W$ is more likely to remain positive when starting from $\log^{\gamma}(k)$ than when starting from $i \in \{1,\dots,\log^\gamma(k)-1\}$,
    we see that $\mathbb{P}_1^{\mathcal{I}_k}$-almost surely,
    \begin{align*}
        \mathbb{P}_{1}^{\mathcal{I}_k}\Big(W_n>0 \text{ }\forall n\in I_{A_j}\mid \mathcal{F}_{(A_j-1)\log^{2\gamma}(k)}\Big)&\leq \mathbb{P}^{\mathcal{I}_k}_{\log^{\gamma}(k)}\Big(W_n>0 \text{ }\forall 1\leq n\leq \log^{2\gamma}(k) \Big)\\
        &\leq \mathbb{P}^{\mathbb{Z}}_{\log^{\gamma}(k)}\Big(W_n>0 \text{ }\forall 1\leq n\leq \log^{2\gamma}(k) \Big)\\
        &=\mathbb{P}_{0}^{\mathbb{Z}}\Big(W_n>-\log^{\gamma}(k) \text{ }\forall 1\leq n\leq \log^{2\gamma}(k) \Big).
    \end{align*}
    Using the diffusivity of $W$, we now argue that the last probability is at most $\varepsilon$ for some constant $\varepsilon \in (0,1)$. Indeed, by the local limit theorem on $\mathbb{Z}$ we readily obtain
    \begin{align*}
        \mathbb{P}_{0}^{\mathbb{Z}}\Big(\exists n\in \{1,\dots,\log^{2\gamma}(k)\}: W_n\leq -\log^{\gamma}(k)\Big)&\geq \mathbb{P}_{0}^{\mathbb{Z}}\Big(W_{\log^{2\gamma}(k)}\in ( -2\log^{\gamma}(k),-\log^{\gamma}(k)]\Big)\\
        &=\sum_{h=-2\log^{\gamma}(k)+1}^{-\log^{\gamma}(k)}\mathbb{P}_{0}^{\mathbb{Z}}\Big(W_{\log^{2\gamma}(k)}=h)\\
        &\geq \sum_{h=-2\log^{\gamma}(k)+1}^{-\log^{\gamma}(k)} \frac{c}{\log^{\gamma}(k)}=c,
        \end{align*}
        for some constant $c\in (0,1)$, whence 
        \[\mathbb{P}_{0}^{\mathbb{Z}}\Big(W_n>-\log^{\gamma}(k) \text{ }\forall 1\leq n\leq \log^{2\gamma}(k) \Big)\leq 1-c\eqqcolon \varepsilon.\]
        Inserting the last estimate back into~\eqref{eq:iterate} we obtain
        \[
        \mathbb{P}_{1}^{\mathcal{I}_k}\Big(\bigcap_{i=1}^{A_j}\{W_n>0 \text{ }\forall n\in I_i\}\Big)\leq \varepsilon \mathbb{P}_{1}^{\mathcal{I}_k}\Big(\bigcap_{i=1}^{A_j-1}\{W_n>0 \text{ }\forall n\in I_i\}\Big),
        \]
        and iterating we conclude that
        \[\mathbb{P}_{1}^{\mathcal{I}_k}(H_0>A_j\log^{2\gamma}(k))\leq \exp(-A_j\log(1/\varepsilon)).\]
        Therefore we arrive at
        \begin{align*}
            \mathbb{E}_1^{\mathcal{I}_k}\left[H_0^2\right]&\leq 2L^2+\sum_{j> L}\exp(-A_j\log(1/\varepsilon))\\
            &\leq 2L^2+\sum_{j\geq L}\exp(-j\log^{-2\gamma}(k)\log(1/\varepsilon))\leq 3L^2.
        \end{align*}
        Taking $L=\log^{2\gamma}(k)$ yields the desired upper bound. \medskip

        Concerning the first moment there remains to evaluate the average length of each ``excursion'' $H_0$. For $a, b \in \mathcal{I}_k$ let $H(a \leftrightarrow b) = H_b + H_a \circ \theta_{H_b}$ denote the commute time between the two vertices. We have
        \begin{equation*}
            \mathbb{E}_1^{\mathcal{I}_k}[H_0]\le\mathbb{E}_1^{\mathcal{I}_k}[H(1 \leftrightarrow 0)].
        \end{equation*}
        Applying the commute time identity (see \cite[Proposition~10.7]{LPBook}) we obtain
        \begin{equation*}
            \mathbb{E}_1^{\mathcal{I}_k}[H(1 \leftrightarrow 0)] \le R_{\mathrm{eff}}^{\mathcal{I}_k}(1, 0) \times 2 |\mathcal{I}_k| \le 3 \log^\gamma(k).
        \end{equation*}
        The proof is concluded.
    \end{proof}

\begin{prop}\label{prop:ChernovBound}
    Consider the events defined in \eqref{eqn:LargeDeviationEvents}, there exist constants $C, c>0$ such that, for all $x = (z, 0)$ such that $\|z\|_\infty = k \ge 4$ and $t \ge k^\varepsilon$ 
    \begin{equation}
    \label{eq:Bound-A-c}
        \mathbb{P}_x\left( A^c \right)\le \frac{C}{t^c},
    \end{equation}
    and
    \begin{equation}
    \label{eq:Bound-B-c}
        \mathbb{P}_x\left( B^c \right)\le \frac{C}{t^c}.
    \end{equation}
\end{prop}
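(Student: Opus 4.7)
The plan is to work conditional on the horizontal walk $\tilde{X}=(\tilde{X}_i)_{i\ge 0}$ and apply Chebyshev's inequality to the partial sum $\zeta_n = \sum_{i=0}^{n-1}V_i$. Under $\mathbb{P}_x$, $\tilde X$ is a simple random walk on $\mathbb{Z}^2$ started at $z$, and by the strong Markov property, conditional on $\tilde X$ the increments $(V_i)_{i\ge 0}$ are independent with $V_i$ distributed as $\tau_{\tilde X_i}$ under $\mathbb{P}_{(\tilde X_i,0)}$. A crucial preparatory observation is that the scaling $t \ge k^\varepsilon$ forces $k\le t^{1/\varepsilon}$, and since $\tilde X$ advances by at most one unit per step, $\|\tilde X_i\|_\infty \le k + i \le 2 t^{1/\varepsilon}$ for every $i\le t$. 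Combining with Lemma~\ref{lemma:MomentsInTooth} yields the uniform estimates $\mathbb{E}[V_i\mid\tilde X]\le C_\varepsilon\log^\gamma t$ and $\mathbb{E}[V_i^2\mid\tilde X]\le C_\varepsilon \log^{4\gamma} t$, valid deterministically for $i\le t$.

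For~\eqref{eq:Bound-A-c}, I would rewrite $A^c=\{\zeta_N > t/2\}$ with $N=c_2 (t/2)\log^{-\gamma}t$. The upper moment bound gives $\mathbb{E}[\zeta_N\mid\tilde X]\le C_\varepsilon c_2 t/2$, which one pushes below $t/4$ by choosing $c_2 = c_2(\varepsilon)$ small. Conditional Chebyshev then yields
\begin{equation*}
   \mathbb{P}_x(\zeta_N > t/2\mid\tilde X) \le \frac{\mathrm{Var}(\zeta_N\mid\tilde X)}{(t/4)^2} \le \frac{C N\log^{4\gamma} t}{t^2} \le \frac{C\log^{3\gamma} t}{t},
\end{equation*}
so that $\mathbb{P}_x(A^c)\le C t^{-1+\delta}$ for any fixed $\delta>0$ and $t$ large.

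For~\eqref{eq:Bound-B-c}, set $M=c_3 t\log^{-\gamma} t$, so that $B^c\subseteq \{\zeta_M \le t\}$ up to off-by-one corrections absorbed in constants. The delicate point is a matching \emph{lower} bound on $\mathbb{E}[\zeta_M \mid \tilde X]$: Lemma~\ref{lemma:MomentsInTooth} only provides $\mathbb{E}[V_i\mid\tilde X]\ge c_4\log^\gamma\|\tilde X_i\|_\infty$ when $\|\tilde X_i\|_\infty\ge 4$, and this bound degrades for $\tilde X_i$ near the origin. I would introduce a threshold $L\coloneqq t^\delta$ for some fixed $\delta\in (0,1/2)$ and the counter $N_L\coloneqq |\{0\le i< M : \|\tilde X_i\|_\infty \ge L\}|$. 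The standard two-dimensional estimate $g^{\mathbb{Z}^2}_M(y,y')\le C\log M$ (e.g., by summing $p^{\mathbb{Z}^2}_n(y,y')\le C/n$) yields
\begin{equation*}
   \mathbb{E}_x[M-N_L] \le C|B_\infty(0,L)|\log M \le C t^{2\delta}\log t,
\end{equation*}
so by Markov $\mathbb{P}_x(N_L < M/2) \le C t^{-(1-2\delta)}\log^{\gamma+1}t$, which is polynomially small since $\delta<1/2$. On the event $\{N_L \ge M/2\}$, Lemma~\ref{lemma:MomentsInTooth} gives $\mathbb{E}[\zeta_M\mid\tilde X] \ge c_4 N_L (\log L)^\gamma \ge (c_4\delta^\gamma c_3/2)\, t$, which is $\ge 2t$ once $c_3=c_3(c_4,\delta)$ is chosen large enough. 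Conditional Chebyshev then bounds $\mathbb{P}_x(\zeta_M < t\mid\tilde X)$ by $C\log^{3\gamma} t /t$ as above, and a union bound with the control of $\{N_L<M/2\}$ produces~\eqref{eq:Bound-B-c}.

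The main obstacle is the lower bound underlying~\eqref{eq:Bound-B-c}: since the first-moment estimate in Lemma~\ref{lemma:MomentsInTooth} degenerates near the origin, one must combine the conditional Chebyshev step with a quantitative control on the occupation time of small $\ell^\infty$-balls by the two-dimensional horizontal walk, and then carefully balance the scales $L=t^\delta$, $M$, and the constants $c_3,\delta$ so that $\mathbb{E}[\zeta_M\mid\tilde X]\ge 2t$ holds on a set of overwhelming probability while the conditional Chebyshev estimate still produces a polynomially small remainder.
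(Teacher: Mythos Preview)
Your proposal is correct and follows essentially the same route as the paper: condition on the horizontal walk, bound the conditional moments of the $V_i$ via Lemma~\ref{lemma:MomentsInTooth}, and for $B^c$ first excise the event that $\tilde X$ spends too much time near the origin (the paper uses the threshold $t^{1/3}$ where you use $t^\delta$). The one substantive difference is that for the lower-tail bound on $\{\zeta_M\le t\}$ the paper applies an exponential Markov inequality (taking $\mathbb{E}[e^{-\lambda\zeta_M}]$ and optimizing over $\lambda$), obtaining a stretched-exponential bound $e^{-ct^{1/3}}$, whereas you use ordinary conditional Chebyshev and get $C\log^{3\gamma}t/t$; since only polynomial decay is needed, your simpler argument suffices and is arguably cleaner here.
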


\begin{proof}
    We recall the definition given in \eqref{eqn:DefVi}, that is $V_i = \tau_{\tilde{X}_i} \circ \theta_{\zeta_i} = \zeta_{i+1} - \zeta_i, \, i \in \mathbb{N}$, where we recall that $(\tilde{X}_i)_{i \in \mathbb{N}}$ denotes the projection of the walk on $\tilde{V}$, counting only horizontal steps (see~\eqref{eq:Horizontal-Walk}). 
    
    Note that conditionally on the trajectory $\{\tilde{X}_{i}\}_{i = 0}^r$ the random variables $\{V_i\}_{i =0}^r$ are independent for any $r \geq 1$. Moreover, conditionally on $\tilde{X}_{i} = z$, $z\in\mathbb{Z}^2$, we have $V_i \stackrel{(\mathrm{d})}{=} \tau_z$ with the former as in \eqref{eqn:Tau}. Note in passing that the random variables $\{V_i\}_{i = 0}^r$ are not necessarily identically distributed, but their distributions are heavily correlated (since the teeth of a certain height are surrounded by other teeth of similar height).\\

    \noindent \textbf{The event $A^c$:} We begin by proving~\eqref{eq:Bound-A-c}. Informally, we aim at controlling the probability that the random walk $X$ makes ``too few'' horizontal steps within its first $t/2$ steps. The latter corresponds to an upward deviation for the sum of the excursion times in the teeth. More precisely, as we explain below, we have
    \begin{align}\label{eqn:InversionEventA}
        \mathbb{P}_x\left(A^c\right) \le \mathbb{P}_x\left( \sum_{j = 0}^{c_2 \log^{-\gamma}(t) t/2 } V_j \ge \frac{t}{4}\right).
    \end{align}
    Indeed, the inequality $\sum_{j = 0}^{c_2 \log^{-\gamma}(t) t/2 } V_j < \frac{t}{4}$ implies that the random walk $X$ has performed at least $c_2 \log^{-\gamma}(t) t/2$ horizontal steps in its first $t/2$ steps as, by definition, there is always a horizontal step between $V_j$ and $V_{j+1}$, and $c_2 \log^{-\gamma}(t) t/2  < t/4$ for all $t$ large. This means that
    \begin{equation*}
        \left\{\sum_{j = 0}^{c_2 \log^{-\gamma}(t) t/2 }V_j < \frac{t}{4} \right\} \subseteq A,
    \end{equation*}
    and the inequality \eqref{eqn:InversionEventA} follows readily. \medskip

    We now derive an upper bound on the probability on the right-hand side of~\eqref{eqn:InversionEventA}. To that end, we condition the random variables $\{V_j\}_{j = 0}^{r(A)}$, where $r(A) \coloneqq c_2 \log^{-\gamma}(t) t/2$, on a realization of the horizontal walk $\tilde{X}$. More precisely, let $\omega = \{\omega_i\}_{i = 0}^{r(A)}$ be any fixed path of points in $\mathbb{Z}^2$ such that $z = \omega_0 \sim \omega_1 \sim \dots \sim \omega_{r(A)}$. We use the shorthand notation $\{\tilde{X}_{[0,r(A)]} =\omega\}$ for the event $\{\tilde{X}_0 = \omega_0, \tilde{X}_1 = \omega_1, \dots, \tilde{X}_{r(A)} = \omega_{r(A)}\}$. As already observed, conditionally on $\{\tilde{X}_{[0,r(A)]} =\omega\}$ the $\{V_j\}_{j =0}^{r(A)}$ are independent (but not identically distributed) random variables with expectation
    \begin{equation}
    \label{eq:Expectation-V-Variables}
        \mathbb{E}_{x}\left[V_j | \tilde{X}_{[0,r(A)]} = \omega\right] = \mathbb{E}_{(\omega_j,0)}\left[\tau_{\omega_j}\right], \qquad j = 0, \dots, r(A).
    \end{equation}
    We will use the shorthand notation $\mathrm{E}_{\omega}\left[V_j\right]$ to denote the quantity $\mathbb{E}_{x}[V_j | \tilde{X}_{[0,r(A)]} = \omega]$ (similar notation applies to different moments). The reader might find useful to think about this procedure in the following way: the realization of $\tilde{X}$ is a random environment and the sum of $\{V_j\}_{j \ge 0}$ is a random walk in random environment. In this picture, $\mathrm{E}_{\omega}\left[V_j\right]$ may be viewed as the quenched mean of the $j$-th increment of the random walk. \medskip
    
    We now prove that for all $t \geq k^\varepsilon$ and paths $\omega$ starting from $z$, one has (deterministically)
    \begin{equation}
        0 \le \mathrm{E}_{\omega}\left[V_j\right]\le c^*\log^{\gamma}(t), \qquad \text{for all }j = 0, \dots, r(A).
    \end{equation}
    for some $c^*(c_2) >0$ such that $a \mapsto c^\ast(a)$ is increasing. To see this, we note that for any such path $z = \omega_0 \sim \dots \sim \omega_{r(A)}$,
    \begin{equation}
        \max\{\|\omega_j\|_\infty \, : \, j \in \{0,\dots ,r(A) \} \} \leq k^\varepsilon + c_2 t/2 \log^{-\gamma}(t)  \stackrel{t \geq k^\varepsilon}{\leq} (1 + c_2)t,
    \end{equation}
    and the claim follows by Lemma~\ref{lemma:MomentsInTooth} and~\eqref{eq:Expectation-V-Variables}. \medskip

    Therefore, we see that
    \begin{align*}
        & \mathbb{P}_x\left( \sum_{j = 0}^{c_2 \log^{-\gamma}(t) t/2 } V_j \ge \frac{t}{4}\right) \\
        &\le \sum_{\omega} \mathbb{P}_x\left( \sum_{j = 0}^{c_2 \log^{-\gamma}(t) t/2 } (V_j - \mathrm{E}_{\omega}\left[V_j\right]) \ge \frac{t}{4} - c^*c_2 t/2 \mid \tilde{X}_{[0,r(A)]}  = \omega \right)\mathbb{P}_x(\tilde{X}_{[0,r(A)]}=\omega),
    \end{align*}
    with the sum ranging over the set of all paths $\omega$ in $\mathbb{Z}^2$ of length $r(A)$ with $\omega_0 = x$. 
    We can choose $c_2$ such that $c^*c_2 \le 1/4$ (recall that decreasing $c_2$ also decreases $c^*$). Then, for any $\omega$ as above, we obtain
    \begin{equation}
    \begin{split}
    \label{eq:Bound-A-event}
        \mathbb{P}_x&\left( \sum_{j = 0}^{c_2 \log^{-\gamma}(t) t/2 } (V_j - \mathrm{E}_{\omega}\left[V_j\right]) \ge \frac{t}{4} - c^*c_2 t/2 \mid \tilde{X}_{[0,r(A)]} = \omega \right) 
        \\&\le \mathbb{P}_x\left( \sum_{j = 0}^{c_2 \log^{-\gamma}(t) t/2 } (V_j - \mathrm{E}_{\omega}\left[V_j\right]) \ge \frac{t}{8} \mid \tilde{X}_{[0,r(A)]} = \omega \right)\le C \frac{t \log^{-\gamma}(t) \log^{4\gamma}(t)}{t^2}.
        \end{split}
    \end{equation}
    In the last step we used Chebyshev's inequality and Lemma~\ref{lemma:MomentsInTooth}. Note that the inequality thus holds for $\mathbb{P}_x$-almost every realization of $\tilde{X}$, and taking expectations with respect to $\mathbb{E}_x$ yields the result and concludes the proof of~\eqref{eq:Bound-A-c}.
    \\

    \noindent \textbf{The event $B^c$:} We now turn to the upper bound on the probability of $B^c$ in~\eqref{eq:Bound-B-c}. As we explain below, we use a similar type of argument as for~\eqref{eqn:InversionEventA}, to derive
    \begin{equation}\label{eqn:InversionEventB}
        \mathbb{P}_x\left(B^c\right) \le \mathbb{P}_x\left( \sum_{j = 0}^{c_3 \log^{-\gamma}(t) t} V_j \le 2t\right).
    \end{equation}
    Indeed, if the time spent in the first $c_3 \log^{-\gamma}(t) t$ vertical excursions exceeds $2t$ then there are less than $c_3 \log^{-\gamma}(t) t$ horizontal steps up to time $t$. More precisely, one has
\begin{equation*}
    \left\{\sum_{j = 0}^{c_3 \log^{-\gamma}(t) t}V_j > 2t \right\} \subseteq B,
\end{equation*}
proving~\eqref{eqn:InversionEventB}. 
    For $Z \subseteq \mathbb{Z}^2$ and integers $0 \le a < b$, we denote by $
        \ell_Z(a,b) = \sum_{k = a}^b \mathbbm{1}\{\tilde{X}_k \in Z \}$    the occupation time accumulated by the horizontal component $(\tilde{X}_k)_{k \geq 0}$ of the random walk in the set $Z$ between times $a$ and $b$, counting time steps again only when horizontal steps are made. Then, we see
    \begin{align*}
        \mathbb{P}_x\left( \sum_{j = 0}^{c_3 \log^{-\gamma}(t) t} V_j \le 2t\right) &\le \mathbb{P}_x\left( \sum_{j = 0}^{c_3 \log^{-\gamma}(t) t} V_j \le 2t, \ell_{B_\infty(0, t^{1/3})}(0, c_3 \log^{-\gamma}(t) t) \le \frac{1}{2} c_3 \log^{-\gamma}(t) t\right) \\
        &+ \mathbb{P}_x\left( \ell_{B_\infty(0, t^{1/3})}(0, c_3 \log^{-\gamma}(t) t) > \frac{1}{2} c_3 \log^{-\gamma}(t) t\right).
    \end{align*}
We start by bounding the second summand using a first moment computation. To that end, note that for any $x' = (z',0)$ with $z'\in \mathbb{Z}^2$,
\begin{equation}
\label{eq:Green-Occupation-Z^2}
    \mathbb{E}_{x'}\left[ \ell_{B_\infty(0, t^{1/3})}(0, c_3 \log^{-\gamma}(t) t)\right] = \sum_{y \in B_\infty(0,t^{1/3})} g^{\mathbb{Z}^2}_{ c_3 \log^{-\gamma}(t) t}(z',y) \leq Ct^{2/3}\log(t),
\end{equation}
similarly as in~\eqref{eq:Bound-Green-fct-uniform} and~\eqref{eq:Bound-Green-Z^2-log}. With this, we infer that
\begin{align*}
    \mathbb{P}_x\left( \ell_{B_\infty(0, t^{1/3})}(0, c_3 \log^{-\gamma}(t) t) > \frac{1}{2} c_3 \log^{-\gamma}(t) t\right) &\le C \frac{\mathbb{E}_x\left[ \ell_{B_\infty(0, t^{1/3})}(0, c_3 \log^{-\gamma}(t) t)\right]}{t\log^{-\gamma}(t)}\\
    & \stackrel{\eqref{eq:Green-Occupation-Z^2}}{\le} C \frac{t^{2/3}\log(t)}{t\log^{-\gamma}(t)} \le C t^{-c}.
\end{align*}
Before proceeding with the other quantity, we will employ again the strategy we used for the event $A^c$ to condition on the event $\{\tilde{X}_{[0,r(A)]} = \omega\}$. We consider the set of paths $\omega = \{\omega
_i\}_{i = 0}^{r(B)}$ in $\mathbb{Z}^2$ with $z = \omega_0 \sim \omega_1 \sim \dots \sim \omega_{r(B)}$, with $r(B) \coloneqq c_3 \log^{-\gamma}(t) t$ (this definition coincides with the one before, but with $r(B)$ replacing $r(A)$). Since the event $\{\ell_{B_\infty(0, t^{1/3})}(0, c_3 \log^{-\gamma}(t) t) \le \frac{1}{2} c_3 \log^{-\gamma}(t) t\}$ depends only on $(\tilde{X}_i)_{i = 0}^{r(B)}$, we obtain
\begin{equation}\label{eqn:TimeOutside}
    \begin{split}
        & \mathbb{P}_x\left( \sum_{j = 0}^{c_3 \log^{-\gamma}(t) t} V_j \le 2t, \ell_{B_\infty(0, t^{1/3})}(0, c_3 \log^{-\gamma}(t) t) \le \frac{1}{2} c_3 \log^{-\gamma}(t) t\right) \\
        &= \mathbb{E}_x\left[  \mathbb{P}_x\left( \sum_{j = 0}^{c_3 \log^{-\gamma}(t) t} V_j \leq 2t \mid \tilde{X}_{[0,r(B)]} = \omega \right) \mathds{1}\{\ell_{B_\infty(0, t^{1/3})}(0, c_3 \log^{-\gamma}(t) t) \le \frac{1}{2} c_3 \log^{-\gamma}(t) t\}\right].
    \end{split}
\end{equation}
It follows that the set of $\omega$ contributing to the conditional probability can be restricted to paths $\omega = \{\omega_i\}_{i = 0}^{r(B)}$ such that $\mathcal{J}(\omega) = \{ i \in \{0, \dots, r(B)\} \, \colon \,  \omega_i \not \in B_\infty(0, t^{1/3})\}$ satisfies $|\mathcal{J}(\omega)| \ge r(B)/2$, and we set
\begin{equation}
 \Omega_* =  \{\omega_0 = z \sim \dots \sim \omega_{r(B)} \, \colon \, |\mathcal{J}(\omega)| \geq r(B)/2 \}.
\end{equation}
Given $\omega \in \Omega_\ast$, we now define for $1 \leq j \leq |\mathcal{J}(\omega)|$ the random variable $V_j^{\mathrm{out}}$ as the $j$-th vertical excursion in a tooth based outside the $\mathbb{Z}^2$-ball $B_\infty(0, t^{1/3})$, i.e.~more formally setting 
\begin{equation}
    \begin{split}
        i_1 & = \inf\{i \in \{0,\dots, r(B)\} \, \colon \omega_i \notin B_\infty(0,t^{1/3}) \}, \\
        i_k & = \inf\{i \in \{0,\dots, r(B)\} \setminus \{i_1,\dots,i_{k-1} \} \, \colon \omega_i \notin B_\infty(0,t^{1/3}) \}, k \in \{2, \dots, |\mathcal{J}(\omega)|\},
    \end{split}
\end{equation}
and $V^{\mathrm{out}}_j = V_{i_j}$. Conditionally on $\{\tilde{X}_{[0,r(B)]} = \omega\}$ with $\omega \in \Omega_*$, we have that $V_j^{\mathrm{out}}$ is distributed as $\tau_{\omega_{i_j}}$ with $\omega_{i_j} \notin B_\infty(0,t^{1/3})$.  With slight abuse of notation, we will write $\mathrm{E}_{\omega}[V^{\mathrm{out}}_j]$ for the mean of $V_j^{\mathrm{out}}$ conditionally on $\{\tilde{X}_{[0,r(B)]} = \omega\}$. Equipped with these observations, we have that for any $\omega \in \Omega_*$
\begin{equation}\label{eqn:Quenched}
    \mathbb{P}_x\left( \sum_{j = 0}^{c_3 \log^{-\gamma}(t) t} V_j 
    \leq 2t \mid \tilde{X}_{[0,r(B)]} = \omega \right) \le \mathbb{P}_x\left( \sum_{j = 1}^{c_3/2 \log^{-\gamma}(t) t} V_j^{\mathrm{out}} \le 2t \mid \tilde{X}_{[0,r(B)]} = \omega \right).
\end{equation}
From the latter, we deduce that the left-hand side of~\eqref{eqn:TimeOutside} is bounded above by
\begin{equation}\label{eqn:InsertLast}
    \mathbb{E}_x \left[ \mathbb{P}_x\left( \sum_{j = 1}^{c_3/2 \log^{-\gamma}(t) t} V_j^{\mathrm{out}} \le 2t \mid \tilde{X}_{[0,r(B)]} = \omega \right)\mathds{1}\{\ell_{B(0, t^{1/3})}(0, c_3 \log^{-\gamma}(t) t) \le \frac{t}{2} c_3 \log^{-\gamma}(t) \}\right].
\end{equation}
Recall that $\{V_j^{\mathrm{out}}\}_{j = 1}^{r(B)}$ are independent and, on any realization of the walk $\{\tilde{X}_{[0,r(B)]} = \omega\}$ with $\omega \in \Omega_*$, they are such that $c_6\log^{\gamma}(t) \le \mathrm{E}_{\omega}[V^{\mathrm{out}}_j] \le c^*\log^{\gamma}(t)$ with $c_6 = (1/3)^\gamma$. Moreover, we also see that $\mathrm{E}_{\omega}[(V^{\mathrm{out}}_j)^2] \le c_{**} \log^{4\gamma}(t)$ by Lemma~\ref{lemma:2Expectations2d}. \medskip
    
We now derive an upper bound on the right-hand side of~\eqref{eqn:Quenched}. Consider the event $\{\tilde{X}_{[0, r(B)]} = \omega\}$ for a fixed $\omega \in \Omega_*$. Multiplying both sides of the inequality inside the probability by $-\lambda$ for some $\lambda>0$ to be determined, taking $\exp(\cdot)$ on both sides and using Markov's inequality results in the upper bound
\[
e^{2\lambda t}\mathbb{E}_{x}\Big[\exp\Big(-\lambda\sum_{j = 1}^{c_3/2 \log^{-\gamma}(t) t} V_j^{\mathrm{out}}\Big) \mid \tilde{X}_{[0,r(B)]} = \omega \Big]=e^{2\lambda t}\prod_{j=1}^{c_3/2 \log^{-\gamma}(t) t}\mathrm{E}_{\omega}\Big[\exp\Big(-\lambda V_j^{\mathrm{out}} \Big)\Big].\]
Since $e^{-x}\leq 1-x+x^2/2$ for every $x\geq 0$, we observe that
\begin{equation}
    \begin{split}
\mathrm{E}_{\omega}\Big[\exp\Big(-\lambda V_j^{\mathrm{out}} \Big)\Big]& \leq 1-\lambda \mathrm{E}_{\omega}[V_j^{\mathrm{out}}]+\frac{\lambda^2}{2}\mathrm{E}_{\omega}\left[(V_j^{\mathrm{out}})^2\right]\\
&\leq \exp\Big(-\lambda \mathrm{E}_{\omega}[V_j^{\mathrm{out}}]+\frac{\lambda^2}{2}\mathrm{E}_{\omega}\left[(V_j^{\mathrm{out}})^2\right]\Big),
\end{split}
\end{equation}
where for the last inequality, we have used that $e^x\geq 1+x$ for every $x \in \mathbb{R}$. Thus, we arrive at
\begin{align*}
    \prod_{j=1}^{c_3/2 \log^{-\gamma}(t) t}\mathrm{E}_{\omega}\Big[\exp\Big(-\lambda V_j^{\mathrm{out}} \Big)\Big]&\leq \prod_{j=1}^{c_3/2 \log^{-\gamma}(t) t}\exp\Big(-\lambda \mathrm{E}_{\omega}[V_j^{\mathrm{out}}]+\frac{\lambda^2}{2}\mathrm{E}_{\omega}[(V_j^{\mathrm{out}})^2]\Big)\\
    &=\exp\Bigg(-\lambda \sum_{j=1}^{c_3/2 \log^{-\gamma}(t) t}\mathrm{E}_{\omega}[V_j^{\mathrm{out}}]+\frac{\lambda^2}{2}\sum_{j=1}^{c_3 \log^{-\gamma}(t) t}\mathrm{E}_{\omega}[(V_j^{\mathrm{out}})^2]\Bigg).
\end{align*}
Note that, for any realization $\{\tilde{X}_{[0,r(B)]} = \omega\}$ with $\omega \in \Omega_*$, we see that $\lambda \sum_{j=1}^{c_3/2 \log^{-\gamma}(t) t}\mathrm{E}_{\omega}[V_j^{\mathrm{out}}] \ge 2^{-1}\lambda  c_3 c_6 t$. We finally fix the constant $c_3$ so that $2^{-1}c_3c_6 \ge 3$, and thus
\begin{equation*}
    \exp\Bigg(2\lambda  t - \lambda \sum_{j=1}^{c_3/2 \log^{-\gamma}(t) t}\mathrm{E}_{\omega}[V_j^{\mathrm{out}}]\Bigg) \le \exp(2\lambda  t - 2^{-1}\lambda  c_3 c_6 t) \le \exp(-\lambda t).
\end{equation*}
We also notice that 
\begin{equation*}
    \sum_{j=1}^{c_3 \log^{-\gamma}(t) t}\mathrm{E}_{\omega}[(V_j^{\mathrm{out}})^2] \le c_{**} t \log^{4\gamma}(t).
\end{equation*}
Collecting the pieces, we have left for $\omega \in \Omega_*$
\begin{equation*}
    \mathbb{P}_x\left( \sum_{j = 1}^{c_3/2 \log^{-\gamma}(t) t} V_j^{\mathrm{out}} \le 2t \mid \tilde{X}_{[0,r(B)]} = \omega \right) \le \exp\left(-\lambda t + \frac{\lambda^2}{2} c_{**} t \log^{4\gamma}(t) \right) \le e^{- c t^{1/3}}.
\end{equation*}
The final inequality is justified by setting $\lambda = t^{-1/2}$, $c > 0$ is some absolute constant. We insert this last display in~\eqref{eqn:InsertLast}, this yields the result.
\end{proof}
 
\subsection{Collecting the estimates}

\begin{lem} \label{lemma:HKGaussZ2}
Let $x = (z, h)$ with $\|z\|_\infty = k, h \ge 0$, there exist two positive constants $c, c'>0$ such that for all $t \ge 0$, 
\begin{equation*}
    q_t(0, x) \le \frac{c}{t} e^{ - \frac{h^2}{c't}}.
\end{equation*}
\end{lem}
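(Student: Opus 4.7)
My approach will closely parallel that used for Lemma~\ref{lemma:HKonFractComb} in the fractal setting, but with the refined estimate of Proposition~\ref{prop:HeatKernelZ2} playing the role of the fractal heat kernel bound. First, by the reversibility of the discrete-time simple random walk on the locally finite graph $G$, the heat kernel is symmetric, so $q_t(0, x) = q_t(x, 0)$, and I may work with the walk started at $x = (z, h)$. Since the only way for the walk on $\mathrm{Comb}(\mathbb{Z}^2, f_\gamma)$ to leave the $z$-tooth is through its base $(z, 0)$, and since its motion inside the tooth is that of a simple 1D random walk on the segment $\{0, 1, \ldots, \lfloor \log^\gamma(k)\rfloor\}$ started at $h$, the strong Markov property applied at the first hitting time $H = H_{(z, 0)}$ yields the first-passage decomposition
\begin{equation*}
q_t(x, 0) \,\leq\, \sum_{s = h}^{t-1} \mathbb{P}^{\mathbb{Z}}_h\bigl(T_0^{\lfloor \log^\gamma(k)\rfloor} = s\bigr)\, q_{t-s}\bigl((z, 0), 0\bigr),
\end{equation*}
with $T_0^{m}$ the exit time of $[0, m]$ for a 1D simple random walk on $\mathbb{Z}$ (the hitting probability in the tooth is dominated by this quantity since the tooth is a reflected version of the same walk).

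The first factor is controlled by the ballot-type inequality~\eqref{eqn:GeneralBallot}, which contributes a direct Gaussian term of the form $\tfrac{c\, h}{s^{3/2}} e^{-h^2/(c's)}$ and a reflection term of the same structure in $(2\log^\gamma(k) - h)$; the two terms are handled analogously, so I focus on the former. For the second factor, combining the two cases of Proposition~\ref{prop:HeatKernelZ2} gives the uniform bound
\begin{equation*}
q_{t-s}\bigl((z, 0), 0\bigr) \,\leq\, \frac{c}{\max\{t-s,\, k^2 \log^\gamma(k)\}} \,\leq\, \frac{c}{t-s}.
\end{equation*}

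I then split the sum at $s = t/2$. For $s \leq t/2$, I use $q_{t-s}((z, 0), 0) \leq 2c/t$ and convert the resulting sum into a Gaussian integral via the substitution $u = h^2/(c's)$, which gives
\begin{equation*}
\sum_{s = h}^{t/2} \frac{h}{s^{3/2}} e^{-h^2/(c's)} \,\leq\, C \int_{2h^2/(c't)}^{\infty} u^{-1/2} e^{-u}\, du \,\leq\, C\, e^{-h^2/(c''t)},
\end{equation*}
so this portion contributes at most $(c/t)\, e^{-h^2/(c''t)}$. For $s > t/2$, I use $\tfrac{h}{s^{3/2}} e^{-h^2/(c's)} \leq \tfrac{C h}{t^{3/2}} e^{-h^2/(c't)}$ and further split the range according to whether $t - s \geq k^2 \log^\gamma(k)$ or $t-s < k^2 \log^\gamma(k)$, inserting the sharper bound $c/(k^2 \log^\gamma(k))$ for $q_{t-s}((z, 0), 0)$ in the second sub-range. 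Applying the elementary inequality $h\, e^{-h^2/(ct)} \leq C \sqrt{t}\, e^{-h^2/(2ct)}$ then removes the factor $h$ in each sub-range and returns a bound of the desired form $(c/t)\, e^{-h^2/(c'''t)}$.

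The main obstacle I anticipate is precisely in the $s > t/2$ regime, where naively summing $\sum 1/(t-s)$ would produce an extraneous $\log(t)$ factor (this did not arise in Lemma~\ref{lemma:HKonFractComb} because there the on-diagonal bound carries an effective exponent strictly less than $1$). The two-tiered split above is designed to absorb this: the sub-range $t - k^2 \log^\gamma(k) < s \leq t-1$ consists of at most $k^2 \log^\gamma(k)$ terms each of size $c/(k^2 \log^\gamma(k))$, yielding a bounded contribution that becomes $O(1/t)$ times the Gaussian factor after the $h \mapsto \sqrt{t}$ transformation; and the complementary sub-range, which is non-empty only when $t > 2 k^2 \log^\gamma(k)$, is then in a regime where $h^2/t \leq \log^{2\gamma}(k)/t$ leaves enough slack in the exponent $c'$ to allow absorbing the residual factor into a slightly smaller exponential constant $c''$.
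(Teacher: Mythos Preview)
Your decomposition and the handling of the range $s \le t/2$ are the same as in the paper. The difference is entirely in how you treat $s > t/2$, and there your argument has a genuine gap.

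The paper does \emph{not} carry the ballot estimate into the range $s > t/2$. Instead it exploits the fact that the tooth has length only $\log^\gamma(k)$: for $t > k^{1/2}$ one has the crude exit-time tail bound
\[
\mathbb{P}^{\mathbb{Z}}_h\bigl(T_0^{\log^\gamma(k)} \ge t/2\bigr) \le \exp\Bigl(-c\,\tfrac{t}{\log^{2\gamma}(t)}\Bigr),
\]
obtained by iterating the diffusive estimate $\inf_\ell \mathbb{P}^{\mathbb{Z}}_\ell(T_0^{\log^\gamma(k)} \le \log^{2\gamma}(k)) \ge c'$ across $t/\log^{2\gamma}(k)$ blocks. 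Since $(t-s)^{-1} \le 1$, the whole tail $\sum_{s \ge t/2}$ is then super-polynomially small and automatically dominated by $(c/t)e^{-h^2/(c't)}$.

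Your alternative via the two-tiered split almost works, but the justification you give for the sub-range $t/2 < s \le t - k^2\log^\gamma(k)$ is wrong. After the $h \mapsto \sqrt{t}$ transformation you are left with a factor $\log\bigl(t/(2k^2\log^\gamma(k))\bigr)$, and ``slack in the exponent $c'$'' cannot absorb it: in this regime $h^2/t \le \log^{\gamma}(k)/(2k^2)$ is tiny, so $e^{-h^2/(c't)}$ and $e^{-h^2/(c''t)}$ differ only by a factor bounded between two fixed constants for \emph{any} choice of $c', c''$, which is useless against an unbounded logarithm. The correct argument is to \emph{not} apply $h \le C\sqrt{t}$ here, but rather to use the much stronger bound $h \le \log^\gamma(k)$ directly: since $t > 2k^2\log^\gamma(k) \ge k$, one has
\[
\frac{h}{\sqrt{t}}\,\log\Bigl(\tfrac{t}{2k^2\log^\gamma(k)}\Bigr) \le \frac{\log^\gamma(k)}{\sqrt{t}}\,\log(t) \le \frac{\log^{\gamma+1}(t)}{\sqrt{t}} \le C,
\]
which gives the desired $(c/t)e^{-h^2/(c't)}$ bound without touching the exponential. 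With this fix your route is valid, but the paper's exit-time tail argument is shorter and avoids the extra case analysis altogether.
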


\begin{proof}
    The proof is very similar to the one of Lemma~\ref{lemma:HKonFractComb}, and we only provide details for the steps in which adaptation to the present set-up is required. Similarly to~\eqref{eq:Application-Ballot-Sec3}, we obtain
    \begin{equation*}
        q_t(0, (z, h)) \le \sum_{s= 1}^{t-1} \mathbb{P}^{\mathbb{Z}}_{h}\left(T_0^{\log^\gamma(k)} = s\right) q_{t-s}((z, 0), 0) \le \sum_{s= 1}^{t-1} \mathbb{P}^{\mathbb{Z}}_{h}\left(T_0^{\log^\gamma(k)} = s\right) (t-s)^{-1}.
    \end{equation*}
    For all $t>k^{1/2}$, we claim that there exists $c>0$ such that
    \begin{equation*}
        \mathbb{P}^{\mathbb{Z}}_{h}\left(T_0^{\log^\gamma(k)} \ge t/2\right) \le \exp\left(- c\frac{t}{\log^{2\gamma}(t)}\right).
    \end{equation*}
    Indeed, by standard random walk estimates there exists $c'>0$ such that
    \begin{equation*}
        \inf_{\ell \in [0, \log^\gamma(k)] }\mathbb{P}^{\mathbb{Z}}_{\ell}\left(T_0^{\log^\gamma(k)} \le \log^{2\gamma}(k)\right) \ge c'.
    \end{equation*}
    Then, by the strong Markov property we find
    \begin{align*}
        \mathbb{P}^{\mathbb{Z}}_{h}\left(T_0^{\log^\gamma(k)} \ge t/2\right) 
        &\le \prod_{j = 1}^{t/\log^{2\gamma}(k)} \left(1 - \inf_{\ell \in [0, \log^\gamma(k)] }\mathbb{P}^{\mathbb{Z}}_{\ell}\left(T_0^{\log^\gamma(k)} \le \log^{2\gamma}(k)\right) \right)\\
        & \le \left(1 - c'\right)^{t/\log^{2\gamma}(k)}\\
        &\le \exp\left(- c\frac{t}{\log^{2\gamma}(t)}\right),
    \end{align*}
    as $\log^{2\gamma}(k) \le \frac{1}{2}\log^{2\gamma}(t)$ in the range considered.
    Then,
    \begin{equation*}
        \sum_{s= t/2}^{t-1} \mathbb{P}^{\mathbb{Z}}_{h}\left(T_0^{\log^\gamma(k)} = s\right) (t-s)^{-1} \le t^2 \exp\left(- c\frac{t}{\log^{2\gamma}(t)}\right) \le \frac{c}{t} e^{ - \frac{h^2}{c't}}.
    \end{equation*}
    We are now left with the task of bounding
    \begin{align*}
        \sum_{s= 1}^{t-1} \mathbb{P}^{\mathbb{Z}}_{h}\left(T_0^{\log^\gamma(k)} = s\right) (t-s)^{-1} .
    \end{align*}
    Employing \eqref{eqn:GeneralBallot}, we obtain for some $C>0$
    \begin{align*}
        q_t(0, (z, h)) &\le C \left(h\sum_{s = 1}^{t/2} \frac{1}{s^{3/2}} e^{-h^2/(c's)} \frac{1}{t} + (2\log^\gamma(k)-h)\sum_{s=1}^{t/2} \frac{1}{s^{3/2}} e^{-(2\log^\gamma(k)-h)^2/(c's))} \frac{1}{t}\right)\\
        &\le C h e^{-h^2/(c't)}\frac{1}{t^{3/2}} + C (2\log^\gamma(k)-h) e^{-(2\log^\gamma(k)-h)^2/(c't)}\frac{1}{t^{3/2}}.
    \end{align*}
    Proceeding as in the last steps of the proof of Lemma~\ref{lemma:HKonFractComb} yields the desired result.
\end{proof}

\begin{lem} \label{lemma:SmallTimeZ2}
Let $x = (z, h)$ with $\|z\|_\infty = k, h\ge0$, there exist a positive constant $c>0$ such that for all $t \le k^2\log^\gamma(k)$, 
\begin{equation*}
    q_t(0, x) \le \frac{c}{k^2\log^\gamma(k)}.
\end{equation*}
\end{lem}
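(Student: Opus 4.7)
The plan is to follow the proof of Lemma~\ref{lemma:SmallTimeFract} line for line, with Lemma~\ref{lemma:HeatKernelZ2k} playing the role of Lemma~\ref{lemma:HeathonSkeleton} and a logarithmic-teeth analogue of the exit-time estimate of Lemma~\ref{lemma:ExitCombLDP} replacing its fractal counterpart. Concretely, I first write
\begin{equation*}
    \mathbb{P}_0(X_t = x) \le \mathbb{P}_0(X_t = x, T_{0, k/2} \le t/2) + \mathbb{P}_0(X_t = x, T_{0, k/2} > t/2),
\end{equation*}
and dispose of the second summand via the reversibility-type bound
\begin{equation*}
    \mathbb{P}_0(X_t = x, T_{0, k/2} > t/2) \le C\, \mathbb{P}_x(X_t = 0, T_{x, k/2} \le t/2)
\end{equation*}
used in Lemma~\ref{lemma:SmallTimeFract}. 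This reduces the problem to bounding expressions of the form $\mathbb{P}_\bullet(X_t = \bullet, T_{\bullet, k/2} \le t/2)$.

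Next, the strong Markov property at $T_{0, k/2}$ gives
\begin{equation*}
    \mathbb{P}_0(X_t = x, T_{0, k/2} \le t/2) \le \mathbb{P}_0(T_{0, k/2} \le t/2)\, \sup_{0 \le s \le t/2}\, \sup_{y \in \partial D(0, k/2)} \mathbb{P}_y(X_{t-s} = x).
\end{equation*}
For the inner supremum I use Cauchy--Schwarz together with Lemma~\ref{lemma:HeatKernelZ2k}, obtaining $q_{t-s}(y,x) \le \sqrt{q_{t-s}(y,y)\, q_{t-s}(x,x)} \le C/(t-s) \le 2C/t$. This is valid in the relevant regime because $q_t(0,x) = 0$ whenever $t < k$, so one only needs $t \ge k$; then $t - s \ge t/2 \ge k/2 > k^{\varepsilon}$ for any fixed $\varepsilon < 1$ and $k$ large, and the assumption of Lemma~\ref{lemma:HeatKernelZ2k} applies to both diagonal heat kernels (note $\|y_1\|_\infty \le k/2 \le k$).

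The crux is then an exit-time deviation estimate of the form
\begin{equation*}
    \mathbb{P}_0(T_{0, k/2} \le t/2) \le C \exp\!\big(-c\, \eta^{a}\big), \qquad \eta \coloneqq \frac{k^2 \log^\gamma(k)}{t}\ge 1,
\end{equation*}
for some constants $a, c, C > 0$. Combining this with the $C/t = C\eta/(k^2 \log^\gamma k)$ bound and recalling that $q_t(0,x) \le \mathbb{P}_0(X_t = x)/\deg(x)$, one obtains
\begin{equation*}
    q_t(0, x) \le \frac{C\, \eta\, \exp(-c\eta^{a})}{k^2 \log^\gamma k} \le \frac{c}{k^2 \log^\gamma k},
\end{equation*}
since $\sup_{\eta \ge 1} \eta\, \exp(-c\eta^{a}) < \infty$ --- the same optimization as in the final step of Lemma~\ref{lemma:SmallTimeFract}.

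To establish the exit-time tail, I would decompose $T_{0, k/2} = \sum_{i = 0}^{L_{0, k/2}} V_i$, recognize that $L_{0, k/2}$ coincides with the exit time of the planar walk $(\tilde X_j)$ from $B_\infty(0, k/2)$, and use a standard planar estimate giving $\mathbb{P}_0(L_{0, k/2} \le N) \le C\exp(-c k^2/N)$ for $N \le c' k^2$. On the complementary event $\{L_{0, k/2} > N\}$ one conditions on the trajectory $\tilde X_{[0, N]}$ and applies a Chernoff bound to $\sum_{i = 0}^{N} V_i$, exactly as was done for the event $B^c$ in Proposition~\ref{prop:ChernovBound}. The main obstacle is this concentration step: the $V_i$ are neither independent nor identically distributed, and the quenched mean $\mathrm{E}_\omega[V_i]$ scales like $\log^\gamma \|\tilde X_i\|_\infty$, which can be small when $\tilde X_i$ is close to the origin; balancing these contributions via a choice $N = \lambda\, t / \log^\gamma(k)$ (and restricting the horizontal trajectory to spend most of its time away from a small box near the origin, in the spirit of~\eqref{eqn:TimeOutside}) yields the required stretched-exponential tail.
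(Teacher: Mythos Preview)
Your overall strategy is exactly the one the paper uses: the decomposition via $T_{0,k/2}$, the reversibility step, the Cauchy--Schwarz bound on $q_{t-s}(y,x)$ (which the paper packages as Lemma~\ref{lemma:HeatZ2CS}), and the final $\sup_{\eta>0}\eta\,e^{-c\eta^{a}}$ optimization are all identical. The one place where you diverge is in \emph{how} you obtain the exit-time tail $\mathbb{P}_0(T_{0,k/2}\le t)\le C\exp(-c\eta^a)$.

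The paper proves this as a separate lemma (Lemma~\ref{lemma:HorExitCombZ2}), following the proof of Lemma~\ref{lemma:ExitCombLDP}: one first applies the strong Markov property at $T_{0,2k/3}$ to reduce to a walk started at a point $y$ with $\|y\|_\infty\ge k/3$, and then works inside $D(y,k/6)$, where \emph{every} tooth has height at least $c\log^\gamma(k)$. The remainder is the binomial comparison with the events $E(\cdot,\nu,k)$. Your proposed route---a Chernoff bound on $\sum V_i$ in the style of the event $B^c$ in Proposition~\ref{prop:ChernovBound}, combined with an occupation-time control near the origin as in~\eqref{eqn:TimeOutside}---can be made to work, but it is more delicate: the Markov-inequality bound on $\ell_{B_\infty(0,k^\varepsilon)}$ only gives a polynomial error (of order $k^{2\varepsilon}\log k/N$), which does not directly yield a stretched-exponential tail in $\eta$ and forces an additional case split on the size of $t$ (roughly $t\le k^{1+\delta}$ versus $t>k^{1+\delta}$) to close the argument. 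The paper's strong-Markov reduction sidesteps this entirely: once the walk is at distance $\ge k/3$ from the origin, short teeth are simply not present, and the binomial argument gives the stretched-exponential tail uniformly.
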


\begin{proof}
    The proof follows closely the one of Lemma~\ref{lemma:SmallTimeFract}, let us go through the necessary adaptations. We need to bound
    \begin{equation*}
        \mathbb{P}_{0}\left( X_t = x , T_{0, k/2} \le \frac{t}{2}\right) \le \mathbb{P}_{0}\left(T_{0, k/2} \le \frac{t}{2}\right) \max_{0 \le s \le t/2} \max_{y \in \partial B(0, k/2)} \mathbb{P}_{y}\left( X_{t-s} = x\right).
    \end{equation*}
    We apply Lemma~\ref{lemma:HorExitCombZ2} to the first term on the right hand side and Lemma~\ref{lemma:HeatZ2CS} to the second term.
    Fixing $t = k^{2}\log^\gamma(k)/\eta$ we obtain
    \begin{align*}
        \mathbb{P}_{0}\left( X_t = x , T_{0, k/2} \le \frac{t}{2}\right) 
        &\le c t^{-1} \exp\left( -c \left(\frac{k^2\log^\gamma(k)}{t}\right)^{1/3}  \right)\\
        & \le c k^{-2}\log^{-\gamma}(k) \eta e^{-c\eta^{1/3}}\\
        &\le c k^{-2}\log^{-\gamma}(k) \sup_{\eta>0}\eta e^{-c\eta^{1/3}} \\
        &\le ck^{-2}\log^{-\gamma}(k).
    \end{align*}
    This concludes the proof.
\end{proof}

\begin{lem}\label{lemma:HorExitCombZ2}
    Let $x = (z, 0)$ with $\|z\|_\infty = k$, $k \ge 1$ and $t>k^\varepsilon$ 
    \begin{equation*}
        \mathbb{P}_{x}\left(T_{x, k} < t\right) \le C \exp\left( -c \left(\frac{k^2\log^\gamma(k)}{t}\right)^{1/3}  \right).
    \end{equation*}
\end{lem}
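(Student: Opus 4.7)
The proof parallels that of Lemma~\ref{lemma:ExitCombLDP}, with the base graph $\mathbb{Z}^2$ playing the role of $\tilde{G}$ (so the underlying exit-time exponent is $\beta = 2$) and the logarithmic tooth profile $\log^\gamma(k)$ replacing the polynomial profile $k^{\gamma'}$. The natural ``effective'' ratio is $\mu \coloneqq t/(k^2\log^\gamma(k))$, and the target reads $\mathbb{P}_x(T_{x,k} < t) \le C\exp(-c\mu^{-1/3})$.

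By the strong Markov property applied at $T_{x, 2k/3}$ we may assume without loss of generality that $x$ is at sup-norm distance at least $2k/3$ from the origin, so that every tooth based at a vertex of $D(x_1, k/3)$ has height comparable, up to constants, to $\log^\gamma(k)$. For a parameter $\lambda>0$ to be fixed we decompose
\begin{equation*}
\mathbb{P}_x\bigl(T_{x, k/3} < t\bigr) \leq \mathbb{P}_x\bigl(L_{x, k/3} < k^2/\lambda\bigr) + \mathbb{P}_x\bigl(T_{x, k/3} < t,\, L_{x, k/3} \geq k^2/\lambda\bigr).
\end{equation*}
For the first summand, a direct application of Lemma~\ref{lemma:HorHittingTime} (whose hypotheses are easily verified on $\mathbb{Z}^2$ with $\beta=2$) gives an upper bound of $C\exp(-c\lambda)$.

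For the second summand, on the event $\{L_{x, k/3} \geq k^2/\lambda\}$ one has $T_{x, k/3} \geq \sum_{i=1}^{k^2/\lambda} V_i$ with $V_i$ as in \eqref{eqn:DefVi}. Conditionally on the horizontal component $\tilde{X}$, the $V_i$ are independent; since every visited tooth has height comparable to $\log^\gamma(k)$, a gambler's ruin and diffusive estimate on the corresponding segment (as in \eqref{eqn:ProbBinom}) shows that $V_i \geq \nu^2\log^{2\gamma}(k)$ with probability at least $c/(\nu\log^\gamma(k))$, uniformly in the trajectory, for any $\nu \in (0, 1/12]$. Consequently,
\begin{equation*}
\mathbb{P}_x\bigl(T_{x, k/3} < t,\, L_{x, k/3} \geq k^2/\lambda\bigr) \leq \overline{\mathbb{P}}\Bigl(\mathrm{Bin}\bigl(\tfrac{k^2}{\lambda}, \tfrac{c}{\nu\log^\gamma(k)}\bigr) < \tfrac{t}{\nu^2 \log^{2\gamma}(k)}\Bigr),
\end{equation*}
and a standard Chernoff bound for binomials yields the estimate $\exp(-c' k^2/(\lambda\nu\log^\gamma(k)))$ provided $\nu \geq 2\mu\lambda/c$.

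It remains to tune the free parameters. Set $\lambda = \mu^{-1/3}$ and $\nu = 2\mu\lambda/c \asymp \mu^{2/3}$; the constraint $\nu \leq 1/12$ then holds for all $\mu$ sufficiently small, and in the complementary regime the claimed bound is trivial after absorbing constants into $C$. The first summand contributes $C\exp(-c\mu^{-1/3}) = C\exp(-c(k^2\log^\gamma(k)/t)^{1/3})$, while a short computation shows that the Chernoff contribution is of order $\exp(-c' k^2/\log^\gamma(k))$, dominated by the first. Combining both bounds delivers the statement. The main obstacle is the joint calibration of $(\lambda,\nu,\mu)$ so that the two summands decay at compatible sub-polynomial rates; the required probabilistic inputs (Lemma~\ref{lemma:HorHittingTime}, conditional independence of the vertical excursions, and a quantitative lower bound on the tail of a tooth excursion) are otherwise standard.
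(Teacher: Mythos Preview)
Your proposal is correct and follows essentially the same approach as the paper's own proof: the same decomposition into $\{L_{x,k/3} < k^2/\lambda\}$ and its complement, the same binomial-minorization via tooth excursions of height $\asymp \log^\gamma(k)$, and the same parameter choices $\mu = t/(k^2\log^\gamma(k))$, $\lambda = \mu^{-1/3}$, $\nu \asymp \mu^{2/3}$. The only minor imprecision is your stated order of the Chernoff contribution---it is actually $\exp(-c'(k^2/\log^\gamma(k))\mu^{-1/3})$ rather than $\exp(-c'k^2/\log^\gamma(k))$---but since $k^2/\log^\gamma(k) \ge 1$ this is still dominated by $\exp(-c\mu^{-1/3})$, so the conclusion is unaffected.
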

\begin{proof}
    The proof follows the same lines as the one of Lemma~\ref{lemma:ExitCombLDP}, we will only describe the necessary adaptations. \medskip

    To that end, we consider
   \begin{equation*}
       \mathbb{P}_{x}\left(T_{x, k/3} < t, L_{x, k/3} \ge k^2/\lambda \right) \le \overline{\mathbb{P}}\left(\mathrm{Bin}\left(\frac{k^2}{\lambda}, \frac{c_1}{\nu \log^\gamma(k)}\right) \le \frac{t}{\nu^2 \log^{2\gamma}(k)} \right),
   \end{equation*}
   where $\overline{\mathbb{P}}$ is defined as in the proof of Lemma~\ref{lemma:ExitCombLDP}. We now fix all the parameters of interest: 
   \begin{itemize}
       \item Fix $\mu = \frac{t}{k^{2}\log^\gamma(k)}$.
       \item Fix $\lambda = \mu^{-\frac{1}{3}}$.
       \item Fix $\nu = 2 \mu \lambda /c_1 = 2 \mu^{2/3}/c_1$.
   \end{itemize}
   We observe
   \begin{equation*}
        \frac{t}{\nu^2 \log^{2\gamma}(k)} \frac{1}{\mathbb{E}[\mathrm{Bin}]} = \frac{t \lambda \nu \log^{\gamma}(k)}{\nu^2 \log^{2\gamma}(k)k^{2} c_1} = \frac{\lambda t}{ \nu k^{2} \log^{\gamma}(k)c_1} = 2 \mu \frac{1}{\mu}.
   \end{equation*}
   The rest of the proof proceeds along the exact same lines of Lemma~\ref{lemma:ExitCombLDP} with the new parameters in place and using Lemma~\ref{lemma:HorHittingTime} with $\beta = 2$. Indeed, the latter holds by a standard martingale argument for $(\|\tilde{X}_n\|^2 - n)_{n \geq 0}$
   since $L_{x,k}$ has the same law under $\mathbb{P}_x$ as $T_{B_{\mathbb{Z}^2}}(x,k)$ under $\mathbb{P}^{\mathbb{Z}^2}_x$.
\end{proof}

\begin{lem}\label{lemma:HeatZ2CS}
    For all $t>0$ and all $x = (z, 0) $ such that $ \|z\|_\infty = k$ we have
    \begin{equation*}
        q_t(0, x) \le \frac{c}{t}.
    \end{equation*}
\end{lem}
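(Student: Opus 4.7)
The plan is to deduce the off-diagonal bound from the on-diagonal control already established in Lemma~\ref{lemma:HeatKernelZ2k} via a Cauchy--Schwarz type estimate, after handling small values of $t$ by a trivial distance argument.

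First, I would fix some $\varepsilon \in (0,1)$ (for concreteness, say $\varepsilon = 1/2$) and distinguish two regimes for $t$. In the short-time regime $t < k$, the graph distance $d_G(0,x)$ on the comb is at least $k$, because any path from $0$ to $x=(z,0)$ must project to a path in $\mathbb{Z}^2$ from $0$ to $z$ and therefore contain at least $\|z\|_\infty = k$ horizontal steps. Consequently, $\mathbb{P}_0(X_t = x) = 0$ and so $q_t(0,x) = 0 \le c/t$ trivially. The case $k = 0$ is handled directly by Lemma~\ref{lemma:HeatKernelZ2k} applied at the origin, which gives $q_t(0,0) \le c/t$ for every $t > 0$.

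In the long-time regime $t \ge k$ (with $k \ge 1$), I would apply Cauchy--Schwarz in the form
\begin{equation*}
q_t(0,x) \le \sqrt{q_t(0,0)\, q_t(x,x)}.
\end{equation*}
Since $t \ge k \ge k^{\varepsilon}$, Lemma~\ref{lemma:HeatKernelZ2k} applied with parameter $\varepsilon$ yields $q_t(x,x) \le c/t$; applying the same lemma with $z = 0$ (so $k = 0$ and the hypothesis $t > k^{\varepsilon}$ is vacuous) gives $q_t(0,0) \le c/t$. Combining these two bounds through the Cauchy--Schwarz inequality produces $q_t(0,x) \le c/t$, as required. Constants at a finite number of short times $t$ of order one can be absorbed by enlarging $c$.

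There is essentially no real obstacle here, as the heavy lifting has already been carried out in Lemma~\ref{lemma:HeatKernelZ2k}. The only mildly delicate points are the boundary cases (namely $k = 0$, $k = 1$ and $t$ near $k$), which are all handled either by the deterministic distance inequality $t < d_G(0,x)\Rightarrow q_t(0,x)=0$ or by absorbing them into the constant $c$. This is exactly analogous to how Lemma~\ref{lemma:HeathonSkeleton2} was deduced from Lemma~\ref{lemma:HeathonSkeleton} in the fractal setting of Section~\ref{sec:Fractal-base}.
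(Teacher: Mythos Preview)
Your proposal is correct and essentially identical to the paper's proof: both use the trivial distance bound $q_t(0,x)=0$ for $t<k$, and for $t\ge k$ apply Cauchy--Schwarz together with Lemma~\ref{lemma:HeatKernelZ2k} at both $0$ and $x$. You are simply a bit more explicit about the choice of $\varepsilon$ and the boundary cases.
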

\begin{proof}
    Note that for all $t \in [1, k-1] \cap \mathbb{N}$ we trivially have $q_t(0, x) = 0$. On the other hand, the Cauchy-Schwarz inequality yields
    \begin{equation*}
        q_t(0, x) \le \sqrt{q_{t}(0, 0)  q_{t}(x, x)} \le \frac{c}{t}.
    \end{equation*}
    Note that the second inequality follows directly from Lemma~\ref{lemma:HeatKernelZ2k}.
\end{proof}

\subsection{Conclusion}

 We define the random variable $\mathcal{Z}_{k, \ell}$ to count the number of collisions happening in $Q_{k, \ell} \coloneqq \{(u, h) \in V \colon \|u\|_\infty = k, 0 \le h \le \ell\}$ and $\tilde{\mathcal{Z}}_{k, \ell}$ to count the number of collisions happening in $\widetilde{Q}_{k, \ell} \coloneqq \{(u, \ell) \in V \colon \|u\|_\infty = k, \ell/3 \le h \le 2\ell/3\}$, see~\eqref{eq:CollInQ-def} for the precise definition. 

\begin{lem}\label{lemma:2Expectations2d}
    For any $\varepsilon>0$ and any $k$ large enough, have that
    \begin{enumerate}
        \item $\mathbb{E}_{0}[\mathcal{Z}_{k, \ell}] \le \ell k^{-1}\log^{-\gamma}(k)$.
        \item $\mathbb{E}_{0}[\mathcal{Z}_{k, \ell} | \, \tilde{\mathcal{Z}}_{k, \ell}>0 ] \ge c\ell$.
    \end{enumerate}
\end{lem}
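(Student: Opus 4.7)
The plan is to mirror the proof of Lemma~\ref{lemma:2ExpectationsFractals} from the fractal setting, substituting the heat-kernel bounds just established in the preceding subsections for those used there. A key elementary input is that the sup-norm sphere $\{u \in \mathbb{Z}^2 \colon \|u\|_\infty = k\}$ contains exactly $8k$ vertices for $k\geq 1$, and since each tooth over the sphere has height at most $\lfloor \log^\gamma(k) \rfloor$, one has $|Q_{k,\ell}| \leq C k \ell$.

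For part (1), I would start from the standard representation
\begin{equation*}
    \mathbb{E}_{0}[\mathcal{Z}_{k,\ell}] = \sum_{t=0}^{\infty} \sum_{x \in Q_{k,\ell}} q_t(0,x)^2
\end{equation*}
(which follows from~\eqref{eq:CollInQ-def} together with the independence and symmetry of $X,Y$) and split the time sum at $T \coloneqq k^2 \log^\gamma(k)$. For $t \leq T$, Lemma~\ref{lemma:SmallTimeZ2} yields $q_t(0,x) \leq c/(k^2 \log^\gamma(k))$ uniformly in $x \in Q_{k,\ell}$, so that the small-time contribution is bounded by
\begin{equation*}
    (T+1) \cdot C k \ell \cdot \frac{c^2}{k^4 \log^{2\gamma}(k)} \leq \frac{C'\ell}{k\log^\gamma(k)}.
\end{equation*}
For $t > T$, the weakened pointwise bound $q_t(0,x) \leq c/t$ from Lemma~\ref{lemma:HKGaussZ2} (equivalently, the second alternative in Proposition~\ref{prop:HeatKernelZ2}) gives
\begin{equation*}
    \sum_{t > T} \sum_{x \in Q_{k,\ell}} q_t(0,x)^2 \leq C k\ell \sum_{t > T} \frac{c^2}{t^2} \leq \frac{C'k\ell}{T} = \frac{C'\ell}{k\log^\gamma(k)}.
\end{equation*}
The choice of cutoff $T$ is precisely what equalizes the two regimes, and adding the two estimates yields part~(1).

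For part (2), I would reproduce the effective-resistance argument from part~(2) of Lemma~\ref{lemma:2ExpectationsFractals}. Conditionally on $\{\tilde{\mathcal{Z}}_{k,\ell}>0\}$, there is a collision at some vertex $x=(u,h)$ with $\|u\|_\infty = k$ and $\ell/3\leq h\leq 2\ell/3$. The expected number of further coincidences of $X$ and $Y$ within $Q_{k,\ell}$ is bounded below by $\tfrac{1}{2}g_{Q_{k,\ell}}(x,x)$ via \cite[(3.5)]{BPS}, which by~\eqref{eq:Green-Eff-Res} equals $\tfrac{1}{2}R^G_{\mathrm{eff}}(x, Q_{k,\ell}^c)$. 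Since the tooth based at $u$ is attached to the remainder of the comb only at $(u,0)$, every path from $x$ to $Q_{k,\ell}^c$ must first traverse the segment from $(u,h)$ to $(u,0)$; the series law therefore gives $R^G_{\mathrm{eff}}(x,Q_{k,\ell}^c) \geq h \geq \ell/3$, proving part~(2) with $c = 1/6$.

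I do not anticipate any genuine obstacle here: the hard work has already been carried out in establishing Lemma~\ref{lemma:HeatKernelZ2k} (via the delicate concentration arguments of Proposition~\ref{prop:ChernovBound}) and the off-diagonal decay of Lemma~\ref{lemma:HKGaussZ2}. The only point requiring care is the matching of scales in the time-splitting above, namely that $T = k^2\log^\gamma(k)$ is exactly the threshold at which the heat kernel changes regime, so that the small-time and large-time contributions in~(1) are of the same optimal order $\ell/(k\log^\gamma(k))$.
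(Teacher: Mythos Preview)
Your proof is essentially identical to the paper's: the same time-split at $T=k^2\log^\gamma(k)$, the same heat-kernel inputs (Lemma~\ref{lemma:SmallTimeZ2} below $T$, Lemma~\ref{lemma:HKGaussZ2} above $T$), the same count $|Q_{k,\ell}|\le Ck\ell$, and the same effective-resistance argument for part~(2).

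One small correction in your justification of part~(2): it is not true that every path from $x=(u,h)$ to $Q_{k,\ell}^c$ must pass through $(u,0)$. If the tooth based at $u$ has height exceeding $\ell$ (which happens whenever $\ell < \lfloor\log^\gamma(k)\rfloor$), the walk can also exit $Q_{k,\ell}$ upward through $(u,\ell+1)$. The series-law bound $R^G_{\mathrm{eff}}(x,Q_{k,\ell}^c)\ge h$ is then invalid; instead, shorting both $(u,0)$ and $(u,\ell+1)$ to $Q_{k,\ell}^c$ gives the parallel resistance $h(\ell+1-h)/(\ell+1)\ge (\ell/3)^2/(\ell+1)\ge c\ell$ for $h\in[\ell/3,2\ell/3]$. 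The conclusion $R^G_{\mathrm{eff}}(x,Q_{k,\ell}^c)\ge c\ell$ survives with a smaller constant, so your part~(2) stands after this fix.
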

\begin{proof}
    The second statement follows in the same way as its counterpart in Lemma~\ref{lemma:2ExpectationsFractals}.
    
    Regarding the first statement, applying Lemma~\ref{lemma:HKGaussZ2} and Lemma~\ref{lemma:SmallTimeZ2}, we write 
    \begin{align*}
        \mathbb{E}_{0}[\mathcal{Z}_{k, \ell}] &= \sum_{t = 0}^\infty \sum_{x \in Q_{k, \ell}} q_{t}(0, x)^2 \\
        &\le \ell \sum_{t < k^2 \log^\gamma(k)} \frac{ck}{k^{4} \log^{2\gamma}(k)} + \ell\sum_{t \ge k^2 \log^\gamma(k)} \frac{ck}{t^{2}}\\
        &\le c\ell k^{-1}\log^{-\gamma}(k) + c\ell k k^{-2}\log^{-\gamma}(k) \\
        &\le c\ell k^{-1}\log^{-\gamma}(k),
    \end{align*}
    where the second inequality follows from a standard comparison of a series to an integral.
\end{proof}

\begin{proof}[Proof of Theorem~\ref{theo:2d}]
We focus on the case $\gamma>1$, and prove statement (2), i.e.~the finite collision property for $\mathrm{Comb}(\mathbb{Z}^2,f_\gamma)$ for the aforementioned regime. Recall the computation \eqref{eqn:EasyConditioning}, by Lemma~\ref{lemma:2Expectations2d} this implies
\begin{equation*}
    \mathbb{P}_{0}(\tilde{\mathcal{Z}}_{k, \ell}>0 ) \le c k^{-1}\log^{-\gamma}(k).
\end{equation*}
Set $j_0 \coloneqq \inf\{j \in \mathbb{N}\,  \colon \, 2^j \ge \log^\gamma(k)\}$, observe that $j_0 \le c \log_{2}(\log^\gamma(k))$ for some absolute $c>0$. Furthermore, set $\mathcal{L} = \mathcal{L}(k) \coloneqq \{2^j\}_{j = 1}^{j_0}$ and note $|\mathcal{L}| = j_0$. We remark that the sets $(\tilde{Q}_{k, \ell})_{k,\ell}$ with $k \ge 0$ and $\ell \in \mathcal{L}(k)$ exhaust the vertex set $V$ of the infinite graph $G$. Then
\begin{equation*}
    \sum_{k = 1}^\infty \sum_{\ell \in \mathcal{L}} \mathbb{P}_{0}(\tilde{\mathcal{Z}}_{k, \ell}>0 ) \le \sum_{k = 1}^\infty \sum_{\ell \in \mathcal{L}} c k^{-1}\log^{-\gamma}(k) \le \sum_{k = 1}^\infty \log_{2}(\log^\gamma(k)) c k^{-1}\log^{-\gamma}(k).
\end{equation*}
For arbitrarily small $\varepsilon>0$ we have that $\log_{2}(\log^\gamma(k)) c k^{-1}\log^{-\gamma}(k) \le c k^{-1}\log^{-\gamma + \varepsilon}(k)$ for all large enough $k$. We choose $\varepsilon$ in such a way that $\gamma - \varepsilon > 1$ (this is possible since $\gamma>1$). Hence, since the series with general term $k^{-1}\log^{-s}(k)$ is integrable whenever $s>1$, we conclude that
\begin{equation*}
    \sum_{k = 1}^\infty \log_{2}(\log^\gamma(k)) c k^{-1}\log^{-\gamma}(k) < \infty.
\end{equation*}
An application of \cite[Corollary~2.3]{BPS} finishes the proof.
\end{proof}

\section{Collisions on combs over supercritical percolation clusters}

\label{sec:Random-base-graphs}

In this section, we prove that the phase transition for a comb with a base graph given by a typical realization of a supercritical Bernoulli bond percolation cluster containing the origin occurs at the same scale as for $\mathbb{Z}^2$. Our main result appears in Theorem~\ref{theo:perc} below. \medskip 

We begin by introducing some notation and useful results on Bernoulli bond percolation that will be used in the proof of Theorem~\ref{theo:perc}. Let $E(\mathbb{Z}^2)$ denote the set of nearest-neighbor edges in $\mathbb{Z}^2$ and let $p \in [0,1]$. We consider the probability measure $\mathbb{Q}$ on $\Omega_0 = \{0,1 \}^{E(\mathbb{Z}^2)}$ such that the canonical coordinates $(\mu_e)_{e \in  E(\mathbb{Z}^2)}$ are i.i.d.~Bernoulli($p$)-random variables. We consider the random subgraph of $(\mathbb{Z}^2,E(\mathbb{Z}^2))$ obtained by removing all edges $e \in E(\mathbb{Z}^2)$ with $\mu_e = 0$, that is $(\mathbb{Z}^2,\{e \in E(\mathbb{Z}^2) \, : \, \mu_e = 1 \})$, and we refer to the connected components of the latter as \textit{clusters}. For $\varnothing \neq A \subseteq \mathbb{Z}^2$ we let $E(A)$ stand for the set of edges $e \in E(\mathbb{Z}^2)$ with both endpoints in $A$, and call the connected components of $(A,\{e \in E(A) \, : \, \mu_e =1 \})$ the \textit{clusters in $A$}. Throughout this entire section, we work with a fixed
\begin{equation}
\label{eq:Supercritical}
    p \in \left(\tfrac{1}{2} ,1\right],
\end{equation}
and generic positive constants may implicitly depend on $p$. Under the standing assumption~\eqref{eq:Supercritical} it is well known that 
\begin{equation}
    \begin{minipage}{0.8\linewidth}
      there exists a $\mathbb{Q}$-a.s.~unique cluster of infinite cardinality, which will be denoted by $\mathcal{C}_\infty$, 
    \end{minipage}
\end{equation}
see~\cite{grimmett1999percolation}. It is convenient to ``center'' the percolation cluster at the origin, and to this end we note that for $p > \frac{1}{2}$, one has $\mathbb{Q}(0 \in \mathcal{C}_\infty) > 0$, and therefore we can consider the probability measure $\mathbb{Q}_0  = \mathbb{Q}(\, \cdot \,|0 \in \mathcal{C}_\infty)$. \medskip

We start by collecting some additional controls from the literature concerning effective resistances and heat kernels on $\mathcal{C}_\infty$. To begin with, we note that by~\cite[Proposition 4.3]{boivin2013existence}, there exists a set $\Omega^{(1)} \subseteq \{0 \in \mathcal{C}_\infty\}$ of full $\mathbb{Q}_0$-probability, such that 
    \begin{equation}
    \label{eq:BoivinRau-bound}
    \begin{minipage}{0.8\textwidth}
for every $\mu \in \Omega^{(1)} $, there exists $N_0(\mu) < \infty$ such that $R^{\mathcal{C}_\infty}_{\mathrm{eff}}(0,B_{\mathcal{C}_\infty}(0,N)^c) \geq c_7 \log(N)$,
for $N \geq N_0(\mu)$.
 \end{minipage}
    \end{equation}
We will also need a corresponding uniform upper bound on the effective resistance. To that end, we define for $N \in \mathbb{N}$ and $\mu \in \{0 \in \mathcal{C}_\infty\}$ the set
\begin{equation}
    \mathcal{C}^N \coloneqq \text{ the largest cluster in $B_\infty(0,N)$},
\end{equation}
where we recall that $B_\infty(0,N) = [-N,N] \cap \mathbb{Z}^2$ (and breaking ties by some arbitrary rule). By~\cite[Theorem 1.1]{abe2015effective}, we know that there exists a set $\Omega^{(2)} \subseteq \{0 \in \mathcal{C}_\infty\}$ of full $\mathbb{Q}_0$-probability such that
 \begin{equation}
 \label{eq:Abe-bound}
\begin{minipage}{0.8\textwidth}
 for every $\mu \in \Omega^{(2)}$, there exists $N_0'(\mu) < \infty$ such that 
        $\sup_{x,y \in \mathcal{C}^N} R^{\mathcal{C}_\infty}_{\mathrm{eff}}(x,y) \leq c_8 \log(N)$, for every $N \geq N_0'(\mu)$.
\end{minipage}
     \end{equation}
In our application, we want to use both~\eqref{eq:BoivinRau-bound} and~\eqref{eq:Abe-bound} together, and we need to rule out the (very) unlikely event that conditioned on $\{0 \in \mathcal{C}_\infty\}$, the largest cluster $\mathcal{C}^N$ in $B_\infty(0,N)$ is not the cluster at the origin. 
We denote by $\mathcal{C}_{B_\infty(0,N)}(x)$ the cluster of $x \in B_\infty(0,N)$ in $B_\infty(0,N)$.

The following result is well-known, and we include a short proof for completeness.
\begin{lem}
\label{lem:Box-origin-lemma}
    Let $p > \frac{1}{2}$. There exists a set $\Omega^{(3)} \subseteq \{0 \in \mathcal{C}_\infty\}$ of full $\mathbb{Q}_0$-probability such that
    \begin{equation}
    \begin{minipage}{0.8\textwidth}
for every $\mu \in \Omega^{(3)}$, there exists $N''_0(\mu) < \infty$ such that for every $N \geq N_0''(\mu)$ one has $\mathcal{C}^N =
        \mathcal{C}_{B_\infty(0,N)}(0)$.
    \end{minipage}
    \end{equation}
\end{lem}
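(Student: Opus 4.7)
The plan is to construct $\Omega^{(3)}$ as the intersection of three full-$\mathbb{Q}_0$-probability events drawn from standard supercritical-percolation facts on $\mathbb{Z}^2$. Fix once and for all $\varepsilon \in (0, 1 - 1/\sqrt{2})$ (for concreteness, $\varepsilon = 1/4$), and decompose
\begin{equation*}
B_\infty(0, N) = B_\infty(0, (1-\varepsilon)N) \cup A_N, \qquad A_N \coloneqq B_\infty(0, N) \setminus B_\infty(0, (1-\varepsilon)N).
\end{equation*}
On $\{\mathcal{C}^N \neq \mathcal{C}_{B_\infty(0,N)}(0)\}$ there exists a cluster $\mathcal{K}$ in $B_\infty(0, N)$, distinct from $\mathcal{C}_{B_\infty(0,N)}(0)$, with $|\mathcal{K}| \geq |\mathcal{C}_{B_\infty(0,N)}(0)|$; the goal is to rule this out for all large $N$ by a volume comparison.

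First I would invoke two classical ingredients. The multidimensional ergodic theorem applied to the stationary indicator $\mathds{1}\{x \in \mathcal{C}_\infty\}$ gives $\mathbb{Q}$-a.s. that
\begin{align*}
|\mathcal{C}_\infty \cap B_\infty(0, (1-\varepsilon)N)| &= (1+o(1))\, \theta(p)(1-\varepsilon)^2 (2N+1)^2, \\
|\mathcal{C}_\infty \cap A_N| &= (1+o(1))\, \theta(p)\bigl(1-(1-\varepsilon)^2\bigr)(2N+1)^2,
\end{align*}
where $\theta(p) = \mathbb{Q}(0 \in \mathcal{C}_\infty)$, and the ratio of the former to the latter tends to a constant strictly greater than $1$ by our choice of $\varepsilon$. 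Second, the Aizenman-Kesten-Newman / Menshikov exponential-decay estimate $\mathbb{Q}(n \leq |\mathcal{C}(x)| < \infty) \leq C e^{-cn}$, combined with a union bound over vertices of $B_\infty(0, N)$ and the first Borel-Cantelli lemma, gives $\mathbb{Q}$-a.s. that, for all sufficiently large $N$, every finite $\mathbb{Z}^2$-cluster meeting $B_\infty(0, N)$ has cardinality at most $C \log N = o(N^2)$.

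The principal step, and the one containing the main obstacle, is an \emph{in-box} connectivity statement: $\mathbb{Q}$-a.s., for all large $N$, any two vertices of $\mathcal{C}_\infty \cap B_\infty(0, (1-\varepsilon)N)$ are joined by an occupied path that stays entirely inside $B_\infty(0, N)$. This is genuinely stronger than the Antal-Pisztora chemical-distance bound, which would only bound the length of a connecting path in $\mathcal{C}_\infty$ without confining it to $B_\infty(0, N)$. A standard way to obtain it is a Russo-Seymour-Welsh construction for $p > 1/2$ carried out inside the box: tile $B_\infty(0, N)$ by an overlapping grid of rectangles of aspect ratio $2{:}1$ and side length of order $(\log N)^2$; for each rectangle, RSW together with the FKG inequality yields an occupied long-crossing with probability $\geq 1 - e^{-c (\log N)^2}$, which is summable over the $O(N^2)$ rectangles in the grid. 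On the resulting full-measure event, the union of these crossings forms a connected occupied subgraph inside $B_\infty(0, N)$ that absorbs every $\mathcal{C}_\infty$-vertex of the inner sub-box $B_\infty(0, (1-\varepsilon)N)$.

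Defining $\Omega^{(3)}$ as the intersection of these three full-measure events with $\{0 \in \mathcal{C}_\infty\}$, I would conclude as follows. On $\Omega^{(3)}$, for $N$ large enough (defining $N_0''(\mu)$), the in-box connectivity ensures $\mathcal{C}_{B_\infty(0, N)}(0) \supseteq \mathcal{C}_\infty \cap B_\infty(0, (1-\varepsilon)N)$. Any competing cluster $\mathcal{K}$ in $B_\infty(0, N)$ must then be either a finite $\mathbb{Z}^2$-cluster (hence of size $\leq C \log N$) or a connected component of $\mathcal{C}_\infty \cap A_N$; in either case the density comparison from the first step yields $|\mathcal{K}| < |\mathcal{C}_{B_\infty(0, N)}(0)|$, whence $\mathcal{C}^N = \mathcal{C}_{B_\infty(0, N)}(0)$. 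The hardest point remains the third item: one must verify that the RSW-based network can be realized using only occupied edges in $B_\infty(0, N)$, so that the path it provides is genuinely contained in the box.
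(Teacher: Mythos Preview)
Your approach is correct in outline but takes a substantially longer route than the paper's. The paper gives a three-line argument via planar duality: on $\{0 \in \mathcal{C}_\infty\}$ the cluster of the origin in $B_\infty(0,N)$ reaches the boundary and hence has size at least $N$, so the event $\{\mathcal{C}^N \neq \mathcal{C}_{B_\infty(0,N)}(0)\}$ forces two disjoint open clusters of size $\geq N$ in the box; by planarity this forces the largest \emph{dual} cluster in the dual box to have size $\geq c\sqrt{N}$, and since the dual parameter $1-p<\tfrac12$ is subcritical, \cite[Theorem~(6.75)]{grimmett1999percolation} bounds this probability by $cN^2 e^{-c'\sqrt{N}}$, whence Borel--Cantelli finishes. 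Your route---ergodic density, finite-cluster decay, and an RSW-based in-box connectivity construction---is considerably heavier machinery for the same conclusion, though its structure (with the RSW step replaced by static renormalization) would carry to $d\geq 3$ where no duality is available. Two small corrections: the supercritical finite-cluster tail in $d=2$ is of surface order, $\mathbb{Q}(n \leq |\mathcal{C}(0)| < \infty) \leq Ce^{-c\sqrt{n}}$, not $Ce^{-cn}$ (the bound you cite is the subcritical one), which only changes $C\log N$ to $C(\log N)^2$ and does not affect your volume comparison; and for your ``absorption'' claim you should state explicitly that the RSW network contains an open circuit in the annulus $A_N$, so that any path from $\mathcal{C}_\infty \cap B_\infty(0,(1-\varepsilon)N)$ to infinity is forced to meet it---this is what makes the in-box connection genuine, and it follows from the standard RSW construction with overlapping rectangles wrapping the annulus.
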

\begin{proof}
Note that on the event $\{0 \in \mathcal{C}_\infty\}$ the origin is connected to the boundary $\partial_{\mathbb{Z}^2} B_\infty(0,N)$, and thus the connected component of the origin fulfills $|\mathcal{C}(0)| \geq N$.  We denote by $\mathcal{C}^{\mathrm{d},N}$ the largest connected component in $B_\infty(0,N) + (\frac{1}{2},\frac{1}{2})$ of the subgraph of the dual lattice $(\mathbb{Z}^2+(\frac{1}{2},\frac{1}{2}),E(\mathbb{Z}^2+(\frac{1}{2},\frac{1}{2})))$ obtained by removing edges $f$ for which $\mu_{e(f)} =1$, where $e(f)$ is the unique edge in $E(\mathbb{Z}^2)$ crossing $f$, see~\cite[Section 11.2]{grimmett1999percolation}. Therefore we have
\begin{equation}
    \begin{split}
        \mathbb{Q}_0(\mathcal{C}^N \neq
        \mathcal{C}_{B_\infty(0,N)}(0)) 
        & \leq \frac{\mathbb{Q}(\text{$B_\infty(0,N)$ contains two disjoint cluster of size $\geq N$} )}{\mathbb{Q}(0 \in \mathcal{C}_\infty)} 
        \\
        & \leq \frac{\mathbb{Q}(|\mathcal{C}^{\mathrm{d},N}| \geq c\sqrt{N})}{\mathbb{Q}(0 \in \mathcal{C}_\infty)} \leq c(p) N^2\exp(-c'(p)\sqrt{N}),
    \end{split}
\end{equation}
by~\cite[Theorem (6.75)]{grimmett1999percolation} and a union bound, since the percolation induced on the dual lattice has parameter $1-p < \frac{1}{2}$. The result then follows by a Borel-Cantelli argument.
\end{proof}

\medskip

Next, we state some pertinent quenched controls on the heat kernel $p_n^{\mathcal{C}_\infty}$ on the infinite cluster $\mathcal{C}_\infty$, which are derived in~\cite{barlow2004random}, and will be instrumental for the proof of Theorem~\ref{theo:perc} below. By~\cite[Theorem 1]{barlow2004random} and (0.5) of the same reference, there exists a set $\Omega^{(4)} \subseteq \{0 \in \mathcal{C}_\infty\}$ of full $\mathbb{Q}_0$-probability and random variables $(\vartheta_x)_{x \in \mathbb{Z}^d}$ with 
\begin{equation}
\label{eq:Theta-finite}
\vartheta_x < \infty, \qquad \text{ for }\mu \in \Omega^{(4)}, x \in \mathcal{C}_\infty,
\end{equation}
and for every $\mu \in \Omega^{(4)}$, $x,y \in \mathcal{C}_\infty$, and $n \geq \vartheta_x(\mu) \vee \|x-y\|_1$ one has the bound
\begin{equation}
\label{eq:HKUB}
p_{n}^{\mathcal{C}_\infty}(x,y)  \leq \begin{cases}
     \frac{c}{n} \exp\left(-\frac{c'\|x-y\|_1^2}{n} \right), & \text{if $n$ and $\|x-y\|_1$ are both odd or both even}, \\
     0, & \text{else}
     \end{cases}
\end{equation}
(we use the convention $\vartheta_x(\mu) = \infty$ if $\mu \notin \Omega^{(4)}$, or $\mu \in \Omega^{(4)}$ but $x \notin \mathcal{C}_\infty$). The  $\vartheta_x$, $x \in \mathbb{Z}^d$, are equal in law and one has the bound
\begin{equation}
\label{eq:Integrability-Barlow}
\mathbb{Q}(x \in \mathcal{C}_\infty, \vartheta_x \geq n) \leq c \exp\left(-c' n^{c_9} \right).
\end{equation}
By standard heat kernel lower bounds on $\mathbb{Z}^2$, cf.~\cite[Theorem 6.28]{Barbook}, we therefore have for every $\mu \in \Omega^{(4)}$, $x,y \in \mathcal{C}_\infty$, and $n \geq \vartheta_x(\mu) \vee \|x-y\|_1$ the bound
\begin{equation}
\label{eq:HKUB-compared}
p_{n}^{\mathcal{C}_\infty}(x,y)  \leq Cp_n^{\mathbb{Z}^2}(x,y).
\end{equation}
Going forward, we need good controls on the range of times for which~\eqref{eq:HKUB-compared} is valid. To that end, define the events (for $k \in \mathbb{N}$)
    \begin{equation}
    \label{eq:Bad-events-Heat-kernel}
        \mathcal{A}_k \coloneqq \bigcup_{\substack{x \in \mathbb{Z}^2, \\ \|x\|_\infty = k}} \{x\in \mathcal{C}_\infty,
        \vartheta_x > (\log k)^{2/c_9} \},
    \end{equation}
    and note that by~\eqref{eq:Integrability-Barlow}, one has
    \begin{equation}
        \sum_{k = 1}^\infty \mathbb{Q}(\mathcal{A}_k) \leq Ck \exp\left(- c' \log^2 (k) \right) < \infty.
    \end{equation}
    By the Borel-Cantelli lemma, we therefore see that for an event $\Omega^{(5)} \subseteq \{0 \in \mathcal{C}_\infty\}$ of full $\mathbb{Q}_0$-probability, we have that for every $\mu \in \Omega^{(5)}$ there exists $K_0(\mu)< \infty
    $ such that
    \begin{equation}
    \label{eq:Theta-small}
        \vartheta_x \leq (\log(k))^{\frac{2}{c_9}}, \qquad \text{for all } x \in \mathcal{C}_\infty \text{ with }\|x\|_\infty = k, \text{ and } k \geq K_0(\mu).
    \end{equation}

We now show a result in the spirit of Lemma~\ref{lemma:MomentsOccTime} for the infinite cluster $\mathcal{C}_\infty$. We highlight that we do not expect the estimate in the next lemma to be sharp, but it will be sufficient for our purposes.

\begin{lem}\label{lemma:MomentsOccTimePerc}
    Let $p > \frac{1}{2}$ and $\mu \in \Omega^{(4)} \cap \Omega^{(5)}$ (with $\Omega^{(4)}$ and $\Omega^{(5)}$ chosen above~\eqref{eq:Theta-finite} and~\eqref{eq:Theta-small}, respectively). Consider the the simple random walk $X$ on $\mathcal{C}_\infty$ under  $P_z^{\mathcal{C}_\infty}$, $z \in \mathcal{C}_\infty$.  For any $\varepsilon, \eta\in (0, 1)$ and $t \geq k^\varepsilon$, one has
    \begin{equation*}
        \mathbb{E}^{\mathcal{C}_\infty}_{x}\left[ \left(\sum_{s = 0}^{t} \mathds{1}\{X_s = x\} \right)^{2}\right] \le C t^{2\eta}, \qquad \text{ for all }x\in \mathcal{C}_\infty, \|x\|_\infty = k \text{ or } x = 0,
    \end{equation*}
    whenever $k \geq K_1(\mu,\eta,\varepsilon) \vee \vartheta_0^{1/(\varepsilon\eta)}(\mu)$.
\end{lem}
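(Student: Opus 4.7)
The plan is to reduce the second-moment estimate to a first-moment estimate on the occupation time, and then to split this first moment at the quenched threshold $\vartheta_x(\mu)$ above which Barlow's on-diagonal heat-kernel decay becomes available. Setting $N_t = \sum_{s=0}^t \mathds{1}\{X_s = x\}$, the simple Markov property at time $s$ gives, for $s \le s'$, the identity $\mathbb{P}^{\mathcal{C}_\infty}_x(X_s = x, X_{s'} = x) = \mathbb{P}^{\mathcal{C}_\infty}_x(X_s = x)\mathbb{P}^{\mathcal{C}_\infty}_x(X_{s'-s} = x)$. Summing over the diagonal and the two halves of the off-diagonal, this yields
$$
\mathbb{E}^{\mathcal{C}_\infty}_x[N_t^2] \le \mathbb{E}^{\mathcal{C}_\infty}_x[N_t] + 2\,\mathbb{E}^{\mathcal{C}_\infty}_x[N_t]^2,
$$
so it will suffice to show that $\mathbb{E}^{\mathcal{C}_\infty}_x[N_t] \le C t^\eta$. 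This is the analogue on $\mathcal{C}_\infty$ of the reduction underlying Lemma~\ref{lemma:MomentsOccTime}, with the sharp polylog input from~\cite{erdos1960some} replaced by the weaker quenched bound below.

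Next, since degrees on $\mathcal{C}_\infty$ are bounded by $4$, I would write $\mathbb{E}^{\mathcal{C}_\infty}_x[N_t] \le 4 \sum_{s=0}^t p_s^{\mathcal{C}_\infty}(x,x)$ and split the sum at $s = \vartheta_x(\mu)$. For $s \le \vartheta_x$ the trivial bound $p_s^{\mathcal{C}_\infty}(x,x) \le 1$ contributes at most $\vartheta_x + 1$. For $s > \vartheta_x$, the on-diagonal version of~\eqref{eq:HKUB} (with $y = x$, where parity is automatic) gives $p_s^{\mathcal{C}_\infty}(x,x) \le C/s$, so the tail contribution is at most $C \log t$. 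Thus $\mathbb{E}^{\mathcal{C}_\infty}_x[N_t] \le C(\vartheta_x + \log t + 1)$.

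The main task, and the only place where the precise form of the hypothesis enters, is to control both $\vartheta_x$ and $\log t$ by $t^\eta$. When $\|x\|_\infty = k$, membership of $\mu$ in $\Omega^{(5)}$ gives $\vartheta_x \le (\log k)^{2/c_9}$ for $k \geq K_0(\mu)$, and combining with the standing assumption $t \ge k^\varepsilon$ (so $\log k \le \varepsilon^{-1}\log t$) yields $\vartheta_x \le C(\varepsilon)(\log t)^{2/c_9}$; when $x = 0$, the extra hypothesis $k \ge \vartheta_0^{1/(\varepsilon\eta)}(\mu)$ together with $t \ge k^\varepsilon$ gives directly $\vartheta_0 \le k^{\varepsilon\eta} \le t^\eta$. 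Since $(\log t)^{2/c_9} \vee \log t \le t^\eta$ for $t$ large enough depending only on $\eta$, taking $K_1(\mu,\eta,\varepsilon)$ sufficiently large absorbs all such lower-order terms. I expect the main (essentially only) delicacy to be this matching of the quenched constant $\vartheta_0$ with the target exponent $\eta$: since~\eqref{eq:Theta-small} is a statement about vertices of \emph{growing} norm, it does not by itself control $\vartheta_0$, which is why it is imposed as the additional threshold $k \ge \vartheta_0^{1/(\varepsilon\eta)}$ in the hypothesis.
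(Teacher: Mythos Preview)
Your proof is correct and a bit more streamlined than the paper's. Both arguments rest on the same two ingredients: the quenched on-diagonal bound $p_s^{\mathcal{C}_\infty}(x,x)\le C/s$ valid once $s\ge\vartheta_x$, and the control of $\vartheta_x$ by $t^\eta$ via~\eqref{eq:Theta-small} (for $\|x\|_\infty=k$) or the explicit hypothesis $k\ge\vartheta_0^{1/(\varepsilon\eta)}$ (for $x=0$). The difference is organizational: the paper splits $N_t$ at time $t^\eta$, bounds the early part trivially by $t^\eta$, and then expands the square of the late part via the Markov property, splitting the inner sum once more at $t^\eta$. You instead invoke the general inequality $\mathbb{E}[N_t^2]\le \mathbb{E}[N_t]+2(\mathbb{E}[N_t])^2$ up front, reducing everything to a single first-moment bound, and then split that sum at $\vartheta_x$ rather than at $t^\eta$. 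Your route avoids the double split and is cleaner; the paper's route is slightly more hands-on but of course yields the same conclusion. Your remark that the hypothesis $k\ge\vartheta_0^{1/(\varepsilon\eta)}$ is needed precisely because~\eqref{eq:Theta-small} says nothing about $\vartheta_0$ is exactly right and matches the paper's treatment of the $x=0$ case.
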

\begin{proof} We first consider the case $x\in \mathcal{C}_\infty, \|x\|_\infty = k$.
 Let $\varepsilon,\eta \in (0,1)$ be fixed, then we see that by~\eqref{eq:Theta-small}, and using that $t \geq k^\varepsilon$,
 \begin{equation}\label{eqn:ParametersLargeEnough}
        t^\eta  \geq k^{\varepsilon\eta} > (\log(k))^{\frac{2}{c_9}} \geq \vartheta_x, \qquad \text{for all } x \in \mathcal{C}_\infty \text{ with }\|x\|_\infty = k, k \geq K_0(\mu) \vee C(\eta,\varepsilon).
    \end{equation}
We then consider $t\geq k^\varepsilon$ with $k \geq K_1(\mu,\eta,\varepsilon) \coloneqq K_0(\mu) \vee C(\eta,\varepsilon)$ as in \eqref{eqn:ParametersLargeEnough} and write
\begin{equation*}
\begin{split}
    \mathbb{E}^{\mathcal{C}_\infty}_{x}\left[ \left(\sum_{s = 0}^{t} \mathds{1}\{X_s = x\} \right)^{2}\right] \le t^{2\eta} + 2t^\eta \mathbb{E}^{\mathcal{C}_\infty}_{x}\left[ \sum_{s = t^\eta}^{t} \mathds{1}\{X_s = x\} \right] + \mathbb{E}^{\mathcal{C}_\infty}_{x}\left[ \left(\sum_{s = t^\eta}^{t} \mathds{1}\{X_s = x\} \right)^{2}\right].
\end{split}
\end{equation*}
As we noted in \eqref{eq:HKUB-compared}, for times $s> t^\eta$ we have that $p^{\mathcal{C}_\infty}_{s}(z, z) \le C/s$ which immediately leads to
\begin{equation}
    \label{eq:Bound-Perc-up-to-t}\mathbb{E}^{\mathcal{C}_\infty}_{x}\left[\sum_{s = t^\eta}^{t} \mathds{1}\{X_s = x\} \right] \le C\log(t).
\end{equation}
Hence, we are left to bound 
\begin{equation*}
    \begin{split}\mathbb{E}^{\mathcal{C}_\infty}_{x}\left[ \left(\sum_{s = t^\eta}^{t} \mathds{1}\{X_s = x\} \right)^{2}\right] &= \mathbb{E}^{\mathcal{C}_\infty}_{x}\left[ \sum_{s = t^\eta}^{t} \mathds{1}\{X_s = x\} \right] \\
    & + 2\mathbb{E}^{\mathcal{C}_\infty}_{x}\left[\sum_{s = t^\eta}^{t} \sum_{r = 0}^{t - s}  \mathds{1}\{X_s = x\} \mathds{1}\{X_{s+r} = x\} \right].
    \end{split}
\end{equation*}
We focus on the second term since the first term coincides with~\eqref{eq:Bound-Perc-up-to-t} and is therefore bounded above by $C\log(t)$. By the simple Markov property we have that
\begin{equation*}
    \mathbb{E}^{\mathcal{C}_\infty}_{x}\left[ \sum_{s = t^\eta}^{t} \sum_{r = 0}^{t - s}  \mathds{1}\{X_s = x\} \mathds{1}\{X_{s+r} = x\} \right] = \sum_{s = t^\eta}^{t} \sum_{r = 0}^{t - s}  p^{\mathcal{C}_\infty}_s(x, x) p^{\mathcal{C}_\infty}_r(x, x).
\end{equation*}
By the same argument as above
\begin{equation*}
    \sum_{s = t^\eta}^{t} \sum_{r = 0}^{t - s}  p^{\mathcal{C}_\infty}_s(x, x) p^{\mathcal{C}_\infty}_r(x, x) \le t^\eta \sum_{s = t^\eta}^{t} \frac{C}{s} + \sum_{s = t^\eta}^{t}\sum_{r = t^\eta}^{t} \frac{C}{s} \frac{C}{r} \le Ct^{2\eta},
\end{equation*}
concluding the proof in the case $\|x\|_\infty = k$. \medskip

We now turn to the proof for $x = 0$, which follows the same lines. In fact, we can apply~\eqref{eq:HKUB-compared} with $x = y= 0$, since $t^\eta \geq k^{\varepsilon\eta} \ge \vartheta_0(\mu)$ (recall that $\vartheta_0(\mu) < \infty$ by~\eqref{eq:Theta-finite} as $0 \in \mathcal{C}_\infty$).
\end{proof}

With these preparations in place, we now turn to the main result of this section. For a fixed realization $\mu \in \{0 \in \mathcal{C}_\infty \}$, we consider the comb graph $G = \mathrm{Comb}(\mathcal{C}_\infty,f_\gamma)$, with $f_\gamma(x) = \varrho_{\ell,\gamma}(\|x\|_\infty )$ (recall~\eqref{eq:Generic-teeth}). The following is the equivalent of Theorem~\ref{theo:2d} in the context of the supercritical percolation cluster. 
\begin{thm}\label{theo:perc}
    Let $p > \frac{1}{2}$, then for a set $\widetilde{\Omega}_0 \subseteq \{0 \in \mathcal{C}_\infty \}$ of full $\mathbb{Q}_0$-probability, the following holds: Let $G = \mathrm{Comb}(\mathcal{C}_\infty,f_\gamma)$ be the comb graph where the underlying graph is $ \mathcal{C}_\infty$ and the tooth profile is given by 
    \begin{equation}
        f_\gamma(x) = \varrho_{\ell,\gamma}(\|x\|_\infty), \qquad \text{with }\varrho_{\ell,\gamma}(k) = \lfloor \log^\gamma(k \vee 1)\rfloor \text{ as in~\eqref{eq:Generic-teeth}},
    \end{equation}
    then 
    \begin{enumerate}
        \item If $\gamma \le 1$ then $G$ has the infinite collision property.
        \item If $\gamma > 1$ then $G$ has the finite collision property.
    \end{enumerate}
\end{thm}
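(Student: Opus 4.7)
The plan is to fix a good event $\widetilde{\Omega}_0 \coloneqq \Omega^{(1)} \cap \Omega^{(2)} \cap \Omega^{(3)} \cap \Omega^{(4)} \cap \Omega^{(5)}$ of full $\mathbb{Q}_0$-probability, pool together on this set the quenched ingredients~\eqref{eq:BoivinRau-bound},~\eqref{eq:Abe-bound},~\eqref{eq:HKUB-compared},~\eqref{eq:Theta-small}, and Lemma~\ref{lem:Box-origin-lemma}, and then transfer the respective proof schemes from Sections~\ref{sec:Fractal-base} and~\ref{sec:2D-Comb} to the percolation setting by replacing each $\mathbb{Z}^2$-input with its $\mathcal{C}_\infty$-analogue. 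Throughout, fix $\mu \in \widetilde{\Omega}_0$.

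Part~(1) will follow the proof of Theorem~\ref{theo:Weak}~(1): apply the Green kernel criterion~\eqref{eq:Infinite-collision} to the exhaustion $D_r \coloneqq D(0,r)$ of the comb. The flow-extension argument that produced~\eqref{eq:Green-lower-first-instance} and~\eqref{eq:Green-upper-first-instance} carries over verbatim and yields, for $r$ exceeding a $\mu$-dependent threshold, $g_{D_r}(o_V, o_V) = R^{\mathcal{C}_\infty}_{\mathrm{eff}}(0, B_{\mathcal{C}_\infty}(0, r)^c) \ge c_7 \log r$ by~\eqref{eq:BoivinRau-bound}, and $g_{D_r}(x, x) \le x_2 + R^{\mathcal{C}_\infty}_{\mathrm{eff}}(x_1, B_{\mathcal{C}_\infty}(0, r)^c) \le \log^\gamma r + C \log r$ for every $x = (x_1, x_2) \in D_r$, the resistance upper bound being extracted from~\eqref{eq:Abe-bound} together with Lemma~\ref{lem:Box-origin-lemma} (which ensures that both $0$ and $x_1$ lie in the origin's cluster $\mathcal{C}^r$) and the well-known comparability between the $\ell^\infty$- and $\mathcal{C}_\infty$-graph distances on the supercritical cluster. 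When $\gamma \le 1$ the tooth contribution $x_2$ is dominated by the resistance term, giving the required uniform domination and hence the infinite collision property.

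The bulk of the work is part~(2), for which the plan is to adapt the proof of Theorem~\ref{theo:2d}~(2) line by line, the key target being the $c/t$ heat-kernel bound of Lemma~\ref{lemma:HeatKernelZ2k} on $\mathcal{C}_\infty$, i.e.~$q_t(x, x) \le c/t$ for $t \ge k^\varepsilon$ and $x = (z, 0)$ with $\|z\|_\infty = k$ large. I would re-run the proof of Lemma~\ref{lemma:HeatKernelZ2k} verbatim, making only the following substitutions: the a priori bound~\eqref{eq:Bound-Green-Z^2-log} becomes its counterpart on $\mathcal{C}_\infty$ (obtained from Lemma~\ref{lem:Box-origin-lemma} and~\eqref{eq:Abe-bound}); the second-moment control of Lemma~\ref{lemma:MomentsOccTime} used in~\eqref{eqn:CauchySchwarzErdos} is replaced by Lemma~\ref{lemma:MomentsOccTimePerc}; and the logarithmic-window Green kernel estimate $g^{\mathbb{Z}^2}_{[t_1, t_2]}(z, z) \le C$ becomes the identical bound on $\mathcal{C}_\infty$, a direct consequence of~\eqref{eq:HKUB-compared} because $\vartheta_z \le (\log k)^{2/c_9} \ll k^\varepsilon \le t_1$ by~\eqref{eq:Theta-small}. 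Proposition~\ref{prop:ChernovBound} then transfers with only cosmetic changes, since Lemma~\ref{lemma:MomentsInTooth} is purely one-dimensional and unaffected by the base graph, while the occupation-time first moment on $B_\infty(0, t^{1/3}) \cap \mathcal{C}_\infty$ corresponding to~\eqref{eq:Green-Occupation-Z^2} is recovered from the same Barlow comparison. Lemmas~\ref{lemma:HKGaussZ2}--\ref{lemma:HeatZ2CS} and Lemma~\ref{lemma:2Expectations2d} then propagate unchanged (the ballot estimate~\eqref{eqn:GeneralBallot} lives on a vertical tooth, and the slice $\{u \in \mathcal{C}_\infty : \|u\|_\infty = k\}$ still has cardinality $O(k)$), and summing $\mathbb{P}_0(\tilde{\mathcal{Z}}_{k, \ell} > 0)$ over $k$ and $\ell \in \mathcal{L}(k)$ is convergent for $\gamma > 1$; \cite[Corollary~2.3]{BPS} delivers the finite collision property.

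The hardest part will be uniformity: each of~\eqref{eq:BoivinRau-bound},~\eqref{eq:Abe-bound},~\eqref{eq:HKUB-compared},~\eqref{eq:Theta-small}, and Lemma~\ref{lem:Box-origin-lemma} only kicks in past a $\mu$-dependent, unquantified threshold, and the estimates above demand that those thresholds hold simultaneously at every vertex touched by each bound. I would aggregate them into a single $K(\mu) < \infty$ and run the Borel--Cantelli step from $k \ge K(\mu)$ onward; the finitely many pairs $(k, \ell)$ with $k < K(\mu)$ add only finitely many terms to the double sum, which is harmless as \cite[Corollary~2.3]{BPS} requires merely that the total sum be finite.
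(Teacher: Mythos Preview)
Your overall architecture matches the paper's: Green kernel criterion for part~(1), and for part~(2) a quenched rerun of Section~\ref{sec:2D-Comb} culminating in the summability of $\mathbb{P}_0(\tilde{\mathcal{Z}}_{k,\ell}>0)$. The treatment of~(1) and the adaptation of Lemma~\ref{lemma:HeatKernelZ2k}/Proposition~\ref{prop:ChernovBound} are essentially as in the paper.

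There is, however, a genuine gap in your plan for~(2). Your good event $\widetilde{\Omega}_0 = \bigcap_{j=1}^5 \Omega^{(j)}$ omits a sixth ingredient that the paper needs, and your assertion that ``Lemmas~\ref{lemma:HKGaussZ2}--\ref{lemma:HeatZ2CS} \dots\ propagate unchanged'' is not correct for Lemma~\ref{lemma:HorExitCombZ2}. That lemma feeds the small-time heat kernel bound (Lemma~\ref{lemma:SmallTimeZ2}), and its proof on $\mathbb{Z}^2$ rests on Lemma~\ref{lemma:HorHittingTime} with $\beta=2$, which in turn is justified there by the martingale $(\|\tilde X_n\|_2^2 - n)_{n\ge 0}$. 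On $\mathcal{C}_\infty$ there is no such martingale, and none of the inputs you list (\eqref{eq:BoivinRau-bound}, \eqref{eq:Abe-bound}, \eqref{eq:HKUB-compared}, \eqref{eq:Theta-small}, Lemma~\ref{lem:Box-origin-lemma}) gives a quenched, spatially uniform exit-time lower bound of the form $\mathbb{P}^{\mathcal{C}_\infty}_x(T_{B_{\mathcal{C}_\infty}(x,k)} < k^2/\lambda) \le C e^{-c\lambda}$. The paper closes this gap with an additional lemma (its Lemma~\ref{lem:Exit-time-percolation}), proved via Barlow's ``very good ball'' framework from~\cite[Proposition~3.7]{barlow2004random}, and accordingly enlarges the good event to $\widetilde{\Omega}_0 = \bigcap_{j=1}^6 \Omega^{(j)}$. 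Without this step, your proposed transfer of Lemma~\ref{lemma:SmallTimeZ2} (and hence of the small-time case in the analogue of Proposition~\ref{prop:HeatKernelZ2}) does not go through, and the first-moment bound in Lemma~\ref{lemma:2Expectations2d}~(1) is left uncontrolled for $t < k^2\log^\gamma(k)$.
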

Before we prove the above theorem, we give some informal comments on the strategy: The challenging part is again the proof of the finite collision property for $\gamma > 1$. For the latter, we proceed essentially as in Section~\ref{sec:2D-Comb}, however special care is required since we can only employ bounds for large $k$ (depending on $\mu \in \widetilde{\Omega}_0$). It turns out however that as soon as uniform controls for large enough $k$ (and possibly large enough $t$, depending on $k$) are in force, we can still use an equivalent of Lemma~\ref{lemma:2Expectations2d} to conclude.

\begin{proof}[Proof of Theorem~\ref{theo:perc}]
    Similarly as in Sections~\ref{sec:Fractal-base} and~\ref{sec:2D-Comb}, the infinite collision property (1) in the regime $\gamma \leq 1$ can be proved using the Green kernel criterion~\eqref{eq:Infinite-collision}, as we now explain. For a fixed percolation configuration $\mu \in \{0 \in \mathcal{C}_\infty\}$ and $N \in \mathbb{N}$, consider the (random) set
    \begin{equation}
        D_N(\mu) = \{(x_1,x_2) \in \mathrm{Comb}(\mathcal{C}_\infty,f_\gamma) \, : \, d_{\mathcal{C}_\infty}(0,x_1) \leq N \}.
    \end{equation}
    Recall the definitions of $\Omega^{(j)}$, $1 \leq j \leq 3$ from above~\eqref{eq:BoivinRau-bound}, above~\eqref{eq:Abe-bound}, and the statement of Lemma~\ref{lem:Box-origin-lemma}, respectively. We note that for any $N \in \mathbb{N}$ and $\mu \in  \Omega^{(1)} \cap \Omega^{(2)} \cap \Omega^{(3)} (\subseteq \{0 \in \mathcal{C}_\infty\})$, we have
\begin{equation}
    B_{\mathcal{C}_\infty}(0,N) \subseteq \mathcal{C}_{B_\infty(0,N)}(0) = \mathcal{C}^{N} \subseteq \mathcal{C}^{2N},
\end{equation}
whenever $N \geq N_0''(\mu)$, having used that for $x \in \mathcal{C}_\infty$, $\|x\|_\infty \leq d_{\mathcal{C}_\infty}(0,x)$. We therefore see that on the same event, we have 
\begin{equation}  R_{\mathrm{eff}}^{\mathcal{C}_\infty}(0,B_{\mathcal{C}_\infty}(0,N)^c) \stackrel{\eqref{eq:BoivinRau-bound}}{\geq} c_7 \log N,
\end{equation}
   as well as
   \begin{equation}
R_{\mathrm{eff}}^{\mathcal{C}_\infty}(x,B_{\mathcal{C}_\infty}(0,N)^c) \leq \max_{y \in \mathcal{C}^{2N}} R_{\mathrm{eff}}^{\mathcal{C}_\infty}(x,y) \leq c_8 \log(2N), \text{ for all }x \in B_{\mathcal{C}_\infty}(0,N),
   \end{equation}
   whenever $N \geq N_0(\mu) \vee N_0'(\mu) \vee N_0''(\mu)$, having also used the monotonicity of the effective resistance in the first step. Arguing as in the proof of Theorem~\ref{theo:Weak}, cf.~\eqref{eq:Green-lower-first-instance}, one has (with $0_V = (0,0)$)
   \begin{equation}
       g_{D_N(\mu)}(0_V,0_V) = R_{\mathrm{eff}}^{\mathcal{C}_\infty}(0,B_{\mathcal{C}_\infty}(0,N))^c \geq c \log N,
   \end{equation}
   and since $\gamma \leq 1$, we see in the same way as for~\eqref{eq:Green-upper-first-instance} that
     \begin{equation}
         \sup_{\substack{x = (z,h) \in \mathrm{Comb}(\mathcal{C}_\infty,f_\gamma), \\ \|z\|_\infty \leq N}} R^{G}_{\mathrm{eff}}(x, D_{N}(\mu)^c) \leq c'' \log(N),
     \end{equation}
     provided that $N \geq N_0(\mu) \vee N_0'(\mu) \vee N_0''(\mu)$. The result follows by the Green kernel criterion~\eqref{eq:Infinite-collision}, possibly after adjusting the constants for smaller $N$. \medskip

     We now turn to (2). We will rely heavily on the bounds for $\mathbb{Z}^2$ derived in the previous Section, and use the regularity of the cluster on large scales, see~\eqref{eq:Theta-small} and Lemma~\ref{lemma:MomentsOccTimePerc}. We also need a variant of Lemma~\ref{lemma:HorHittingTime} adjusted to the percolation cluster, which is stated next.
\begin{lem}
\label{lem:Exit-time-percolation}
    Let $p > \frac{1}{2}$. There exists a set $\Omega^{(6)} \subseteq \{0 \in \mathcal{C}_\infty
    \}$ of full $\mathbb{Q}_0$-probability such that the following holds: For every $\mu \in \Omega^{(6)}$, there exists $K_2(\mu) < \infty$ with the property
    \begin{equation}
    \label{eq:Good-exit-controls-perc}
        \begin{minipage}{0.8\textwidth} $\mathbb{P}_x(L_{x,k} < k^2/\lambda) \leq C\exp\left(-c\lambda \right)$ for every $1\leq \lambda \leq (k\log^\gamma(k))^{\frac{1}{3}}$, $x \in B_\infty(0,e^{k^c}) \cap \mathcal{C}_\infty$, whenever $k \geq K_2(\mu)$.
        \end{minipage}
    \end{equation}
\end{lem}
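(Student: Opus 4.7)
The strategy is to reduce the bound on $L_{x,k}$ to a sub-Gaussian exit-time estimate for the simple random walk on $\mathcal{C}_\infty$, and then derive the latter from Barlow's heat-kernel upper bound~\eqref{eq:HKUB}. Two sources of non-uniformity must be handled: the random threshold $\vartheta_{x_1}$ below which~\eqref{eq:HKUB} fails, and the fact that the Gaussian decay in~\eqref{eq:HKUB} is only with respect to $\|\cdot\|_1$-distance rather than graph distance $d_{\mathcal{C}_\infty}$. The reduction itself is immediate: recalling~\eqref{eqn:DefHorizontalSteps}--\eqref{eq:Horizontal-Walk}, the projection $(\tilde{X}_i)_{i\geq 0}$ of the comb walk started at $x=(x_1,0)$ to its horizontal positions is a simple random walk on $\mathcal{C}_\infty$ started from $x_1$, so $L_{x,k}$ under $\mathbb{P}_x$ has the same law as the exit time of that walk from $B_{\mathcal{C}_\infty}(x_1,k)$, which we denote $T$.

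I would define $\Omega^{(6)}\subseteq\Omega^{(4)}\cap\Omega^{(5)}\cap\{0\in\mathcal{C}_\infty\}$ via Borel-Cantelli applied to two ingredients on a slightly enlarged box $B_\infty(0,2e^{k^c})\cap\mathcal{C}_\infty$ (which contains the trajectory of the walk up to time $2t\leq 2k^2\ll e^{k^c}$ for large $k$): (a) the $\vartheta$-tail estimate~\eqref{eq:Integrability-Barlow} with any choice $c\in(0,c_9)$, yielding $\vartheta_y\leq k$ for every $y$ in this box for $k$ large; and (b) Antal-Pisztora's chemical-distance theorem, providing a deterministic $\rho>0$ and an almost-sure threshold beyond which $d_{\mathcal{C}_\infty}(y,z)\leq\rho\|y-z\|_1$ for $y,z$ in the box with $\|y-z\|_1$ sufficiently large. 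Both failure events have $\mathbb{Q}$-probability summable in $k$, and $K_2(\mu)$ is chosen so that both controls are in force for all $k\geq K_2(\mu)$.

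For the main estimate I would adapt the strategy of Lemma~\ref{lemma:HorHittingTime} (Appendix~\ref{appendix}) to the cluster. Write $t:=k^2/\lambda$; in the stated range, $t\geq k^{5/3}\log^{-\gamma/3}(k)$, which exceeds both $k$ and $\vartheta_y$ throughout the enlarged box for $k\geq K_2(\mu)$. Decompose
\begin{equation*}
\mathbb{P}^{\mathcal{C}_\infty}_{x_1}(T\leq t)\leq \mathbb{P}^{\mathcal{C}_\infty}_{x_1}\bigl(d_{\mathcal{C}_\infty}(X_{2t},x_1)\geq k/2\bigr) + \mathbb{P}^{\mathcal{C}_\infty}_{x_1}\bigl(T\leq t,\,d_{\mathcal{C}_\infty}(X_{2t},x_1)<k/2\bigr).
\end{equation*}
The first summand is bounded by summing~\eqref{eq:HKUB} over $y\in\mathcal{C}_\infty$ with $d_{\mathcal{C}_\infty}(x_1,y)\geq k/2$; by ingredient~(b) above, this set is contained in $\{y:\|x_1-y\|_1\geq k/(2\rho)\}$ for $k\geq K_2(\mu)$, so the sum is at most $C\exp(-ck^2/t)$. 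The second summand is handled by the strong Markov property at $T$: on $\{T\leq t\}$ one has $d_{\mathcal{C}_\infty}(X_T,x_1)>k$, so the event $d_{\mathcal{C}_\infty}(X_{2t},x_1)<k/2$ forces $d_{\mathcal{C}_\infty}(X_T,X_{2t})>k/2$, and the same heat-kernel sum applied at the random base-point $X_T$ (which also lies in the enlarged box, hence inherits both controls from step two) yields a further factor $C\exp(-ck^2/t)$. Combining and substituting $t=k^2/\lambda$ gives the claimed $C\exp(-c\lambda)$ bound.

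The main obstacle is the conversion between $d_{\mathcal{C}_\infty}$ and $\|\cdot\|_1$ in the last step: on the cluster $d_{\mathcal{C}_\infty}$ can be strictly larger than $\|\cdot\|_1$ and~\eqref{eq:HKUB} only provides Gaussian decay in $\|\cdot\|_1$, so the exit-time bound for graph-distance balls demands uniform comparability of the two metrics on a sufficiently large box; this is why the Antal-Pisztora control must be invoked together with Barlow's heat-kernel estimate, and why the range of $\lambda$ is capped at $(k\log^\gamma k)^{1/3}$, ensuring $t\gg k\geq \vartheta_\cdot$ so that Barlow's bound is effective throughout the relevant time horizon.
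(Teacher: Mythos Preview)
Your approach is correct and genuinely different from the paper's. The paper does not derive the exit-time bound from the heat kernel; instead it invokes \cite[Proposition~3.7]{barlow2004random} as a black box, which already gives $\mathbb{P}_x^{\mathcal{C}_\infty}(T_{B_{\mathcal{C}_\infty}(x,\rho)}<t)\leq C\exp(-c\rho^2/t)$ directly for ``very good'' balls, and then controls when balls are very good via a separate family of random variables $(\mathcal{T}_x)_{x\in\mathbb{Z}^d}$ with stretched-exponential tails (this already encodes the metric comparison internally). A Borel--Cantelli on the events $\{\mathcal{T}_x\geq R\}$ over $x$ in a box of size $\exp(R^{\vartheta/3})$, together with the choice $\rho=k$, $R=k^{1+\epsilon}$, then gives the lemma. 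Your route is more elementary in that it uses only ingredients already stated in the paper (the $\vartheta_x$ and~\eqref{eq:HKUB}) plus one extra classical input; the paper's route is quicker but imports more of Barlow's machinery.

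One point to tighten: your use of Antal--Pisztora is not quite sufficient as phrased. You need the implication $d_{\mathcal{C}_\infty}(x_1,y)\geq k/2\Rightarrow\|x_1-y\|_1\geq k/(2\rho)$, whose contrapositive is $\|x_1-y\|_1<k/(2\rho)\Rightarrow d_{\mathcal{C}_\infty}(x_1,y)<k/2$. The raw Antal--Pisztora bound $\mathbb{Q}(y\leftrightarrow z,\,d_{\mathcal{C}_\infty}(y,z)>\rho\|y-z\|_1)\leq Ce^{-c\|y-z\|_1}$ decays in $\|y-z\|_1$, so a union bound over pairs in the box of side $e^{k^c}$ only controls pairs with $\|y-z\|_1$ at least polynomial in $k$; it says nothing about $y$ that are $\ell^1$-close to $x_1$ but $\mathcal{C}_\infty$-far. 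What you actually need is the uniform ball-comparison $B_\infty(x_1,r)\cap\mathcal{C}_\infty\subseteq B_{\mathcal{C}_\infty}(x_1,\rho r)$ with (stretched-)exponential failure probability in $r$, applied at scale $r\asymp k$. This is true and standard (it is part of Barlow's ``good/very good'' framework, cf.~\cite[Section~2]{barlow2004random}), but it is a strengthening of the bare Antal--Pisztora statement rather than an immediate consequence of it; you should either cite that directly or supply the short chaining argument that upgrades the pairwise bound.
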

\begin{proof}
    This follows from a careful application of~\cite[Proposition 3.7]{barlow2004random} in our context. We provide a proof in the Appendix~\ref{appendix}.
\end{proof}
With these preparations, we now aim at adapting the results of the previous section to the percolation cluster. We start with a generalization of the crucial Proposition~\ref{prop:HeatKernelZ2}. For $\mu \in \{0 \in \mathcal{C}_\infty\}$, we abbreviate by $q_t^{\mu}(x,y) = p_t^{\mathrm{Comb}(\mathcal{C}_\infty,f_\gamma)}(x,y)$ the heat kernel on the comb graph, where $t \geq 0$, $x, y \in V$, and use a similar notation for the Green kernel.

\begin{prop} 
\label{prop:Main-HK-bound-cluster}
    Fix $\varepsilon\in (0,1)$. For $\mu \in \Omega^{(4)} \cap \Omega^{(5)} \cap \Omega^{(6)}$, there exist $K_3(\mu,\varepsilon) < \infty$ such that the following holds for every $k \geq K_3(\mu,\varepsilon) \vee \vartheta_0^{40/\varepsilon}(\mu)$: For any
    $x = (z,h) \in V$ with  $\|z\|_\infty = k$ and for every $t \geq k^\varepsilon$, one has for some $c(\varepsilon,\mu) > 0$ that
    \begin{equation}
    \label{eq:HK-perc-Main-bound}
        q^\mu_t(0, x) \le 
        \begin{cases}
            \frac{c(\varepsilon,\mu)}{t} & t \ge k^2\log^{\gamma}(k)\\
            \frac{c(\varepsilon,\mu)}{k^2\log^\gamma(k)} & t < k^2\log^{\gamma}(k).
        \end{cases}
    \end{equation}
\end{prop}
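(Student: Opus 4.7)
The plan is to mirror the proof of Proposition~\ref{prop:HeatKernelZ2}, substituting each $\mathbb{Z}^2$-ingredient with its quenched analogue on $\mathcal{C}_\infty$ that is valid for $\mu$ in the intersection $\Omega^{(1)} \cap \dots \cap \Omega^{(6)}$ (which has full $\mathbb{Q}_0$-probability) and for $k$ exceeding a $\mu$-dependent threshold $K_3(\mu,\varepsilon)$. The proof splits into the three stages of the $\mathbb{Z}^2$ argument: (i) a diagonal bound $q^\mu_t(x,x)\leq c/t$ for $x=(z,0)$, $\|z\|_\infty =k$, and $t\geq k^\varepsilon$; (ii) an off-diagonal extension in the tooth yielding the bound $c/t$ for $t\geq k^2\log^\gamma(k)$; (iii) the small-time bound $c/(k^2\log^\gamma(k))$ via exit-time estimates.

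For stage (i), the analogue of Lemma~\ref{lemma:HeatKernelZ2k}, I would start from Lemma~\ref{lemma:GreenFunctionUpper} (which holds on any graph) and split on the events $A,B$ of~\eqref{eqn:LargeDeviationEvents}. The Chernoff-type control of $\mathbb{P}_x(A^c)$ and $\mathbb{P}_x(B^c)$ carries over from Proposition~\ref{prop:ChernovBound} with minimal changes: the moment bounds in Lemma~\ref{lemma:MomentsInTooth} depend only on the teeth, which are unchanged; Lemma~\ref{lemma:MomentsOccTimePerc} replaces Lemma~\ref{lemma:MomentsOccTime} in the Cauchy--Schwarz estimate~\eqref{eqn:CauchySchwarzErdos}, at the cost of a factor $t^{2\eta}$ for arbitrarily small $\eta>0$, which is absorbed by the Chernoff decay; the occupation-time bound~\eqref{eq:Green-Occupation-Z^2} on the base graph follows from the heat kernel comparison~\eqref{eq:HKUB-compared}, valid thanks to~\eqref{eq:Theta-small} and the hypothesis $k\geq \vartheta_0^{40/\varepsilon}(\mu)$; and the bound $g^\mu_{t/2}(x,x)\leq C\log(t)$ of~\eqref{eq:Bound-Green-Z^2-log} is replaced by the combination of~\eqref{eq:Green-Eff-Res}, the quenched upper bound~\eqref{eq:Abe-bound}, and Lemma~\ref{lem:Box-origin-lemma}. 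Finally, on $A\cap B$, the diagonal Green function is dominated by $C\,\mathbb{E}[B_z]\,g^{\mathcal{C}_\infty}_{[t_1,t_2]}(z,z)$ with $t_1\asymp t_2\asymp t\log^{-\gamma}(t)$; since $t_1 \geq (\log k)^{2/c_9} \geq \vartheta_z$, the quenched upper bound~\eqref{eq:HKUB-compared} gives $p_s^{\mathcal{C}_\infty}(z,z)\leq C/s$, and summing over $[t_1,t_2]$ produces a constant $O(1)$, yielding $q^\mu_t(x,x)\leq C(\mu,\varepsilon)/t$.

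For stage (ii), I would follow the proof of Lemma~\ref{lemma:HKGaussZ2} verbatim: conditioning on the last visit of $X$ to $(z,0)$ before time $t$ and using reversibility yields
\[
q^\mu_t(0,(z,h))\leq \sum_{s=1}^{t-1}\mathbb{P}^{\mathbb{Z}}_h\!\left(T_0^{\log^\gamma(k)}=s\right) q^\mu_{t-s}((z,0),0),
\]
where the tooth distribution is unchanged. The ballot-type inequality~\eqref{eqn:GeneralBallot} together with the Cauchy--Schwarz consequence $q^\mu_{t-s}((z,0),0)\leq C/(t-s)$ of stage (i) (the analogue of Lemma~\ref{lemma:HeatZ2CS}) then produces the Gaussian-decorated upper bound, and in particular $C/t$ for $t\geq k^2\log^\gamma(k)$. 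For stage (iii), I would split on the event $\{T_{0,k/2}\leq t/2\}$ as in Lemma~\ref{lemma:SmallTimeZ2}; Lemma~\ref{lem:Exit-time-percolation} controls the exit probability by $C\exp\!\bigl(-c(k^2\log^\gamma(k)/t)^{1/3}\bigr)$, and combining this with the uniform $C/t$ control from stage (i) (via Cauchy--Schwarz, since the random walk on the comb cannot reach distance $k$ in less than $k$ steps) delivers the bound $c/(k^2\log^\gamma(k))$ after optimizing in $\eta=k^2\log^\gamma(k)/t$, exactly as in the planar proof.

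The main obstacle is the careful bookkeeping of the $\mu$-dependent thresholds: the heat kernel comparison~\eqref{eq:HKUB-compared} is only valid once $n\geq \vartheta_x$, so one must control $\vartheta_x$ uniformly over all base points $x$ potentially visited by the horizontal component of the walk. Since the latter explores a box of radius at most $t$ in time $t$, the assumption $k\geq \vartheta_0^{40/\varepsilon}(\mu)$ combined with~\eqref{eq:Theta-small} guarantees $\vartheta_x\leq (\log\|x\|_\infty)^{2/c_9}\ll t^\eta$ uniformly over the relevant box, which is precisely what is needed to feed into both the Chernoff argument and the sub-logarithmic Green function estimate on $\mathcal{C}_\infty$. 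Collecting all $\mu$-dependent thresholds from stages (i)--(iii), the smallest admissible $k$ is declared to be $K_3(\mu,\varepsilon)$, concluding the proof.
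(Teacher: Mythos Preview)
Your proposal is correct and follows the same three-stage strategy as the paper's proof: a diagonal bound via the $A,B$-decomposition, extension to the teeth via the ballot-type estimate, and the small-time bound via exit-time controls.

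One small point of divergence: in the $A^c$ contribution you bound $\sup_y g^\mu_{t/2}(y,x)$ by $C\log t$ via~\eqref{eq:Green-Eff-Res} combined with the Abe estimate~\eqref{eq:Abe-bound} and Lemma~\ref{lem:Box-origin-lemma}, whereas the paper instead applies Lemma~\ref{lemma:MomentsOccTimePerc} to obtain the cruder bound $Ct^{1/10}$ (and similarly for the occupation-time estimate in the $B^c$ case). Your route is sharper but, as stated, requires checking that the base point $z$ with $\|z\|_\infty = k$ actually belongs to $\mathcal{C}^N$ for the relevant $N$, which is not automatic from the tools listed (it would follow from a chemical-distance comparison such as Antal--Pisztora). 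The paper's repeated use of Lemma~\ref{lemma:MomentsOccTimePerc} sidesteps this issue, since that lemma is formulated directly for points $z$ with $\|z\|_\infty = k$; the loss in the exponent is harmless because any polynomial decay of $\mathbb{P}_x(A^c)$, $\mathbb{P}_x(B^c)$ suffices.
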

\begin{proof}
The proof is carried out in multiple steps. First,
we show the following equivalent of Lemma~\ref{lemma:HeatKernelZ2k}: Let $x = (z,0)$ with $\|z\|_\infty = k$ and $z \in \mathcal{C}_\infty$ or $x = 0_V = (0,0)$. For any $\varepsilon \in (0,1)$ and $t \geq k^\varepsilon$ we have that for $c > 0$, 
    \begin{equation}
    \label{eq:Point-to-point-perc-bound}
        q_t^\mu(x,x) \leq \frac{c}{t},
    \end{equation}
    provided $k \geq K_1(\mu,\frac{1}{40},\varepsilon) \vee K_2(\mu) \vee \vartheta_0^{40/\varepsilon}(\mu)$. We begin by addressing the case $\|z\|_\infty = k$. To establish~\eqref{eq:Point-to-point-perc-bound} in this case, we first show, similarly as in the proof of Lemma~\ref{lemma:HeatKernelZ2k}, that $g^\mu_{[t/2,t]}(x,x) \leq c$ under these conditions. 
     Define the events $A$ and $B$ as in~\eqref{eqn:LargeDeviationEvents}.
    Note that by~\eqref{eq:Theta-small} we see that $t_1 = c_2 \log^{-\gamma}(t)t/2$ and $t_2 = c_3 \log^{-\gamma}(t)t$ both exceed $\vartheta_x$ for all $\|z\|_\infty = k$ if $t \geq k^\varepsilon$ and $k \geq K_0(\mu) \vee C(\varepsilon)$. Therefore, we have (using the same notation for random variables $B^i = \{B^i_z \}_{i \geq i}$ as below~\eqref{eqn:Bzed})
    \begin{equation}         \begin{split}      \mathbb{E}_{x}\left[ \sum_{s = t/2}^{t} \mathds{1}\{X_s = x\} \mathds{1}\{A \cap B\} \right] &\le \mathbb{E}_{x}\left[ \sum_{i = c_2 \log^{-\gamma}(t)t/2}^{c_3 \log^{-\gamma}(t)t} B^i_z \mathds{1}\{\tilde{X}_i = z\}\right]\\         &\le \mathbb{E}_{x}\left[\sum_{i = c_2 \log^{-\gamma}(t)t/2}^{c_3 \log^{-\gamma}(t)t} \mathds{1}\{\tilde{X}_i = z\}\right] \mathbb{E}_{x}\left[ B^1_z \right]\\         & = \sum_{i = c_2 \log^{-\gamma}(t)t/2}^{c_3 \log^{-\gamma}(t)t} p^{\mathcal{C}_\infty}_i(z,z) \mathbb{E}_{x}\left[ B^1_z \right] \\         
    & \stackrel{\eqref{eq:HKUB-compared}}{\le} Cg^{\mathbb{Z}^2}_{[t_1,t_2]}(x,x)\mathbb{E}_{x}\left[ B^1_z \right].
    \end{split} \end{equation}
    On the other hand, we can write using the Markov property
    \begin{equation*}
        \begin{split}
            \mathbb{E}_{x}\left[ \sum_{s = t/2}^{t} \mathds{1}\{X_s = x\} \mathds{1}\{A^c\} \right] \le \mathbb{P}_x\left(A^c\right) \sup_{y \in B_{\mathcal{C}_\infty}(x, t)} g^{\mathcal{C}_\infty}_{[0, t/2]}(y, z) \le \mathbb{P}_x\left(A^c\right) g^{\mathcal{C}_\infty}_{[0, t/2]}(z, z).
        \end{split}
    \end{equation*}
    Note that by Lemma~\ref{lemma:MomentsOccTimePerc} (and the fact that the random variable $\sum_{i = 0}^{t/2} \mathbbm{1}\{X_i = z \}$ is integer valued) we have $g^{\mathcal{C}_\infty}_{[0, t/2]}(z, z) \le Ct^{1/10}$, whenever $k \geq K_1(\mu,1/40,\varepsilon
    )$. Moreover, by Proposition~\ref{prop:BadEventsCluster} below we conclude that 
    \begin{equation*}
        \mathbb{E}_{x}\left[ \sum_{s = t/2}^{t} \mathds{1}\{X_s = x\} \mathds{1}\{A^c\} \right] \le C t^{1/20}t^{-1/4}.
    \end{equation*}
    In a similar fashion, using the Cauchy-Schwarz inequality and Proposition~\ref{prop:BadEventsCluster} below and Lemma~\ref{lemma:MomentsOccTimePerc}
    \begin{equation*}
        \mathbb{E}_{x}\left[ \sum_{s = t/2}^{t} \mathds{1}\{X_s = x\} \mathds{1}\{B^c\} \right] \le C t^{1/20} t^{-1/10},
    \end{equation*}
    provided that $k \geq K_1(\mu,\frac{1}{40},\varepsilon) \vee K_2(\mu)$. The proof for the case $x =(0,0)$ follows from the same argument, one only needs to observe that $t_1, t_2 \ge \vartheta_0(\mu)$ since $t_1, t_2 \ge t^{1/40}$ and $t^{1/40} \ge \vartheta_0(\mu)$.
We are left with the proof of the following proposition, which is an analogue of Proposition~\ref{prop:ChernovBound}.

\begin{prop}\label{prop:BadEventsCluster}
    Consider the events defined in \eqref{eqn:LargeDeviationEvents}. Then, for every $\mu \in \Omega^{(4)} \cap \Omega^{(5)}$, for every $x = (z,0)$ with $z = 0$ or $z \in \mathcal{C}_\infty$ such that $\|z\|_\infty = k \geq K_1(\mu,\frac{1}{40},\varepsilon) \vee 4 \vee \vartheta_0^{40/\varepsilon}(\mu)$ and $t \ge k^\varepsilon$, there exist constants $C, C' > 0$ such that 
    \begin{equation*}
        \mathbb{P}_x\left( A^c \right)\le \frac{C}{t^{1/2}},
    \end{equation*}
    and
    \begin{equation*}
        \mathbb{P}_x\left( B^c \right)\le \frac{C'}{t^{1/5}}.
    \end{equation*}
\end{prop}

\begin{proof}
    The argument is similar to its counterpart given for Proposition~\ref{prop:ChernovBound}. We will only describe the necessary modifications. We will retain the notation used in the proof of Proposition~\ref{prop:ChernovBound}. \medskip

    \noindent \textbf{The event $A^c$:} As we briefly explain, the proof given for the corresponding event in Proposition~\ref{prop:ChernovBound} carries over essentially without changes. Indeed, we need to bound the probability on the right-hand side of~\eqref{eqn:InversionEventA}, which is done conditionally on all possible trajectories $\omega$ of $\tilde{X}$ (which is now a walk on $\mathcal{C}_\infty \subseteq \mathbb{Z}^2$). Since every trajectory $\omega$ can be viewed as a trajectory in $\mathbb{Z}^2$, we can conclude as in~\eqref{eq:Bound-A-event} (which shows that the exponent can be chosen to be $\frac{1}{2}$). \medskip

    \noindent \textbf{The event $B^c$:} We again follow the corresponding proof in the case of $\mathbb{Z}^2$, and derive a bound on the probability on the right-hand side of~\eqref{eqn:InversionEventB}. To that end, we use that for every $k \geq K_1(\mu,\frac{1}{40}, \varepsilon) \vee \vartheta_0^{40/\varepsilon}(\mu)$, we see that by the result of Lemma~\ref{lemma:MomentsOccTimePerc} (with $\eta = \frac{1}{40}$), one has
    \begin{align*}
    \mathbb{P}_x\left( \ell_{B(0, t^{1/3})}(0, c_2 \log^{-\gamma}(t) t) > \frac{1}{2} c_2 \log^{-\gamma}(t) t\right) &\le C \frac{\mathbb{E}_x\left[ \ell_{B_{\mathcal{C}_\infty}(0, t^{1/3})}(0, c_2 \log^{-\gamma}(t) t)\right]}{t\log^{-\gamma}(t)}\\
    & \le \frac{\sum_{x \in B_{\mathcal{C}_\infty}(0, t^{1/3})} \mathbb{E}^{\mathcal{C}_\infty}_z\left[ \sum_{s = 0}^{t} \mathds{1}\{X_s = z\}\right]}{t\log^{-\gamma}(t)}\\
    & \le C \frac{t^{2/3 + 1/20}}{t\log^{-\gamma}(t)} \le C t^{-1/5},
\end{align*}
using also $Z \leq Z^2$ for the integer-valued random variable $Z = \sum_{s = 0}^{t} \mathds{1}\{X_s = z\}$. This is the only step that requires a modification when dealing with the percolation cluster compared to $\mathbb{Z}^2$, and since  the remainder of the proof can be followed line by line, we omit repeating it.
\end{proof}
Having established~\eqref{eq:Point-to-point-perc-bound}, we now show how the proof of~\eqref{eq:HK-perc-Main-bound} is concluded. We first note that by the same argument as in Lemma~\ref{lemma:HeatZ2CS}, we see that for all $t > 0$ and $x = (z,0)$ with $\|z\|_\infty = k$, $z \in \mathcal{C}_\infty$ and assuming that $k \geq K_1(\mu,\frac{1}{40},\varepsilon) \vee  \vartheta_0^{40/\varepsilon}(\mu)$, the Cauchy-Schwarz inequality gives us 
\begin{equation}
    q^\mu_t(0,x) \leq \frac{c}{t}.
\end{equation}
 It remains to prove the equivalent of Lemma~\ref{lemma:SmallTimeZ2}, namely that for $x = (z,h)$ with $\|z\|_\infty = k$, $h \geq 0$, and $t \leq k^2\log^\gamma(k)$, one has
\begin{equation}
\label{eq:Small-times-perc}
    q_t^\mu(0,x)  \leq \frac{c}{k^2\log^\gamma(k)}.
\end{equation}
First note again that the bound is trivial for $k < t$, so we assume $k \leq t \leq k^2 \log^\gamma(k)$. With this in mind, we write similarly as in Lemma~\ref{lemma:ExitCombLDP} and~\ref{lemma:HorExitCombZ2}:
 \begin{equation*}
       \mathbb{P}_{x}\left(T_{x, k} < t \right) = \mathbb{P}_{x}\left(T_{x, k} < t, L_{x, k}< k^2/\lambda \right) + \mathbb{P}_{x}\left(T_{x, k} < t, L_{x, k} \ge k^2/\lambda \right),
   \end{equation*}
for $\lambda > 0$. The first summand can be treated exactly as in Lemma~\ref{lemma:HorExitCombZ2}, noting that in particular, one has $\lambda = \left( \frac{k^2\log^\gamma(k)}{t} \right)^{\frac{1}{3}}$. Inserting the values $k \leq t \leq k^2\log^\gamma(k)$, we readily see that $1 \leq \lambda \leq (k\log^\gamma(k))^{\frac{1}{3}}$. Therefore we can apply Lemma~\ref{lem:Exit-time-percolation} provided that $k \geq K_2(\mu)$ (note also that if $\|z\|_\infty > e^{k^c}$, the bound is trivially true since $t \leq k^2\log^\gamma(k)$ and $q^\mu_t(0,x) = 0$ in this case). This proves~\eqref{eq:Small-times-perc} and concludes the proof of~\eqref{eq:HK-perc-Main-bound}.
\end{proof}
We also note that using same argument as in Lemma~\ref{lemma:HKGaussZ2} (see also Lemma~\ref{lemma:HKonFractComb}), we then obtain that for $x = (z,h)$ with $\|z\|_\infty= k$, $z \in \mathcal{C}_\infty$, $h \geq 0$, one has
\begin{equation}
\label{eq:Gaussian-bounds-on-teeth}
    q_t^\mu(0,x) \leq \frac{c}{t} e^{- \frac{h^2}{c't}},
\end{equation}
for any $t \ge 0$ provided that $k \ge K_1(\mu,\varepsilon) \vee K_2(\mu) \vee \vartheta_0^{40/\varepsilon}(\mu)$ (indeed, note that the expression is zero if $t \leq k$, and~\eqref{eq:Point-to-point-perc-bound} holds as long as $t \geq k^\varepsilon$).
We can now finish the proof of Theorem~\ref{theo:perc} rather straightforwardly.  Suppose that $\mu \in \Omega^{(4)} \cap \Omega^{(5)} \cap \Omega^{(6)}$ and corresponding $K_3(\mu) < \infty$, $\vartheta_0^{40/\varepsilon}(\mu) < \infty$ are given as in the statement of Proposition~\ref{prop:Main-HK-bound-cluster}. We again define random variables $\mathcal{Z}^\mu_{k, \ell}$ as the number of collisions in $Q_{k, \ell}^\mu \coloneqq \{(u, h) \in G \colon \|u\|_\infty = k, 0 \le h \le \ell\}$ and $\tilde{\mathcal{Z}}_{k, \ell}^\mu$ as the number of collisions in $\widetilde{Q}^\mu_{k, \ell} \coloneqq \{(u, \ell) \in G \colon \|u\|_\infty = k, \ell/3 \le h \le 2\ell/3\}$ (we stress here that $G$ depends on the realization $\mu$). We claim that analogously to Lemma~\ref{lemma:2Expectations2d}, we have for any $k$ large enough (possibly depending on $\mu$) the bounds
\begin{equation}
    \begin{split}
\mathbb{E}_{0}[\mathcal{Z}^\mu_{k, \ell}] & \le \ell k^{-1}\log^{-\gamma}(k) , \\
\mathbb{E}_{0}[\mathcal{Z}^\mu_{k, \ell} | \, \tilde{\mathcal{Z}}^\mu_{k, \ell}>0 ] & \ge c\ell.
    \end{split}
\end{equation}
Indeed, we take $k$ large enough such that $k \geq \vartheta_0^{40/\varepsilon}(\mu) \vee K_3(\mu)$, then 
\begin{align*}
        \mathbb{E}_{0}[\mathcal{Z}^\mu_{k, \ell}] &= \sum_{t = 0}^\infty \sum_{x \in Q^\mu_{k, \ell}} q^\mu_{t}(0, x)^2 \\
        &\le \ell \sum_{k \leq t < k^2 \log^\gamma(k)} \frac{ck}{k^{4} \log^{2\gamma}(k)} + \ell\sum_{t \ge k^2 \log^\gamma(k)} \frac{ck}{t^{2}}\\
        &\le c\ell k^{-1}\log^{-\gamma}(k),
    \end{align*}
having used~\eqref{eq:Gaussian-bounds-on-teeth} and~\eqref{eq:HK-perc-Main-bound}. The remainder of the proof is exactly as in the previous section, and is therefore omitted. Finally, we see that the statement of Theorem~\ref{theo:perc} is valid for $\widetilde{\Omega}_0 \coloneqq \bigcap_{j =1}^6 \Omega^{(j)}$, and this finishes the proof. \end{proof}
\appendix

\section{Auxiliary results and some proofs}
\label{appendix}

\noindent In this appendix, we include two technical proofs of that draw heavily on existing literature. \medskip

We start with introducing some useful notation. We define, for $\beta \geq 2$ and $k,t \geq 1$, the quantity
\begin{equation}\label{eqn:ExponentRWs}
    \Phi_\beta(t, k) \coloneqq \left( \frac{k^\beta}{t} \right)^{\frac{1}{\beta - 1}},
\end{equation}
We remark that in the case of $\mathbb{Z}^d$ one can insert $\beta = 2$ and obtain the simple expression $\Phi_2(t, k) = \frac{k^2}{t}$. We now prove Lemma~\ref{lemma:HorHittingTime}, a well-known result on the exit times of random walks on fractal graphs. The proof follows along the same lines of \cite[Lemma~3.7]{BarlowCoulhonKumagai}.

\begin{proof}[Proof of Lemma~\ref{lemma:HorHittingTime}]
    Recall that $x = (x_1,x_2)$.
    Note that the law of $L_{x,k}$ under $\mathbb{P}_x$ is the same as the law of $T_{B_{\tilde{G}}(x_1,k)}$ under $\mathbb{P}^{\tilde{G}}_{x_1}$. Without loss of generality let $x_1 = o$ abbreviate $B_{\tilde{G}}(z,r)$ by $B(z,r)$ for $z \in \tilde{V}$ and $r \geq 0$. We begin by observing that for $n, r > 0$ there exist $p\in (0, 1)$ and $A > 0$ such that
    \begin{equation*}
        \mathbb{P}_{o}^{\tilde{G}}(T_{ B(o, r)} \le n) \le p + n \left(\sup_{x \in B(o, r)} \mathbb{E}^{\tilde{G}}_x[T_{ B(o, 2r)}]\right)^{-1}.
    \end{equation*}
    \begin{equation*}
        \mathbb{E}^{\tilde{G}}_o[T_{ B(o, r)} ] \le 2n + \mathbb{P}_o^{\tilde{G}}(T_{ B(o, r)} > n) \sup_{x \in B(0, r)} \mathbb{E}_x^{\tilde{G}}[T_{ B(o, 2r)}].
    \end{equation*}
    Thus we obtain 
    \begin{equation*}
        \mathbb{E}^{\tilde{G}}_o[T_{ B(o, r)} ] \le 2n + \left( 1 - \mathbb{P}_o^{\tilde{G}}(T_{ B(o, r)} > n) \right)\sup_{x \in B(o, r)} \mathbb{E}_x^{\tilde{G}}[T_{ B(o, 2r)}],
    \end{equation*}
    rearranging and observing that
    \begin{equation*}
        p = \mathbb{E}_o^{\tilde{G}}[T_{ B(o, r)} ] \left( \sup_{x \in B(o, r)} \mathbb{E}^{\tilde{G}}_x[T_{ B(o, 2r)}] \right)^{-1} \in (0, 1).
    \end{equation*}
We also note that by the assumption given in \eqref{eqn:ExitCondition}, we have that $\mathbb{E}_0[T_{ B(0, r)} ] = A r^\beta$ for some constant $A>0$.

Consider the first time the random walk exits the ball of radius $\frac{k}{\lambda}$ and denote it $T_{\partial B(0, \frac{k}{\lambda})}$. Define recursively the stopping times
    \begin{align*}
        \tau_1 &\coloneqq T_{ B(o, \frac{k}{\lambda})}\\
        \tau_{\ell+1} &\coloneqq T_{ B(X_{\tau_\ell}, \frac{k}{\lambda})}.
    \end{align*}
    Clearly, we have that
    \begin{equation*}
        \sum_{i = 1}^\lambda \tau_i \le T_{ B(o, k)} \stackrel{(\mathrm{d})}{=} L_{x, k}.
    \end{equation*}
    We also observe that if $\mathcal{G}_i$ denotes the $\sigma$-field generated by the walk up to time $\sum_{j = 1}^i \tau_i$ then
    \begin{equation*}
        \mathbb{P}_o^{\tilde{G}}(\tau_{i+1} \le t \mid \mathcal{G}_i) \le p + A'\frac{t}{(k/\lambda)^\beta}
    \end{equation*}
    We are now in position to apply \cite[Lemma~A.8]{Barbook} with the parameters
    \begin{equation*}
        a = p, \quad \text{and} \quad b = A'\frac{\lambda^\beta}{k^\beta}.
    \end{equation*}
    We obtain
    \begin{equation*}
        \mathbb{P}_o^{\tilde{G}}\left(\sum_{i = 1}^\lambda \tau_i  \le t\right) \le \exp\left( \left( \frac{A' \lambda t}{p (k/\lambda)^\beta}\right)^{1/2} - \lambda \log\left(\frac{1}{p}\right) \right) \le e^{C (tk^{-\beta} \lambda^{\beta+1})^{1/2} - c \lambda}.
    \end{equation*}
    We now need to optimise over the choice of $\lambda$. First assume that $k^\beta t^{-1} > a$ for some fixed $a>0$. Select $\lambda_0 \in \mathbb{N}$ to be the largest $\lambda$ such that
    \begin{equation*}
        \frac{c}{2} \lambda > C (tk^{-\beta} \lambda^{\beta+1})^{1/2}.
    \end{equation*}
    Observe that for $a$ large enough this holds for $\lambda$ small enough. One can notice that
    \begin{equation*}
        \lambda_0^{\beta - 1} \le t^{-1}k^{\beta} (c^2/4C^2) \le (\lambda_0 + 1)^{\beta - 1},
    \end{equation*}
    which implies the result in the regime $k^{\beta}t^{-1} > a$. On the other hand when $k^{\beta}t^{-1} \le a$ one can adjust the constant in the statement to make the result hold true. The proof is finished.
\end{proof}

We turn to the proof of Lemma~\ref{lem:Exit-time-percolation}. 

\begin{proof}[Proof of Lemma~\ref{lem:Exit-time-percolation}]
    By~\cite[Proposition 3.7]{barlow2004random}, we have the following: If $x \in \mathcal{C}_\infty$ and $B_{\mathcal{C}_\infty}(x,R)$ is ``very good'', then for $t > 0$, $\rho > 0$ with
\begin{equation} 
\label{eq:Conditions-Barlow-VeryGoodBall}
    \rho \leq R, \quad cN_B^2(\log N_B)^{\frac{1}{2}}\rho \leq t, \quad \text{and }t \leq T_B' \end{equation}
    with 
    \begin{equation}
        N_B(x) \leq R^{\frac{1}{4}}, \qquad T_B' = c'\frac{R^2}{\log R},
    \end{equation}
    then, 
    \begin{equation}
        \label{eq:Barlow-very-good-ball-conclusion}\mathbb{P}_x^{\mathcal{C}_\infty}(T_{B_{\mathcal{C}_\infty}(x,\rho)}< t) \leq c_2\exp\left( -\frac{\rho^2}{t}c_3\right)
    \end{equation}
(the precise definition of very good balls can be found in~\cite[Definition 1.7]{barlow2004random} but is not central to our argument to proceed). Moreover, one knows by~\cite[Section 2]{barlow2004random} that there are random variables $(\mathcal{T}_x)_{x \in \mathbb{Z}^d}$ such that for all $\mu \in \Omega'\cap \{0 \in \mathcal{C}_\infty\}$ and $x \in \mathcal{C}_\infty$, the following holds
\begin{equation}
\label{eq:Growth-very-good-range}
    \begin{cases}
        \mathcal{T}_x(\mu) < \infty, \\
        \text{for all $R \geq \mathcal{T}_x$, $B_{\mathcal{C}_\infty}(x,R)$ is very good with $N_{B(x)} \leq R^{\frac{1}{4}}$,} \\ 
        \text{for all $z \in \mathbb{Z}^d$, $r \geq 1$, one has $
        \mathbb{Q}(\mathcal{T}_z \geq r) \leq C\exp(-c r^\vartheta
        ))
        $,}
    \end{cases}
\end{equation}
see also~\cite[Theorem 1.13]{sapozhnikov2017long-range} and~\cite[Remark 1.21]{sapozhnikov2017long-range}. With~\eqref{eq:Growth-very-good-range}, we now argue similarly as in~\eqref{eq:Bad-events-Heat-kernel} and define the events
\begin{equation}
    \mathrm{Bad}_R = \bigcup_{x \in B_\infty(0,\exp(R^{\frac{\vartheta}{3}}))} \{\mathcal{T}_x \geq R \}.
\end{equation}
We see that
\begin{equation}
    \sum_{R =1}^\infty \mathbb{Q}(\mathrm{Bad}_R) \leq \sum_{R = 1}^\infty C(2\exp(R^{\frac{\vartheta}{3}})+1)^2 \exp\left(-cR^\vartheta \right) < \infty.
\end{equation}
Therefore by the Borel-Cantelli lemma there a set $\Omega'$ of full $\mathbb{Q}$-probability and for every $\mu \in \Omega'$, there is $R_0(\mu) < \infty$ such that for all $x \in B_\infty(0,\exp\{R^{\frac{\vartheta}{3}}\}) \cap \mathcal{C}_\infty$, $B_{\mathcal{C}_\infty}(x,R)$ is very good with $N_{B(x)} \leq R^{\frac{1}{4}}$, for every $R \geq R_{0}(\mu)$. We will now argue that~\eqref{eq:Good-exit-controls-perc} follows from the latter observation. To that end, let $\epsilon > 0$ to be chosen later. We let $K_0(\mu) = R_0(\mu)^\frac{1}{1+\epsilon}$ and set
   $ \rho = k$ and $R = k^{1+\epsilon} (\geq k)$ in~\eqref{eq:Conditions-Barlow-VeryGoodBall}. With these choices, we see that the aforementioned conditions are fulfilled provided that
   \begin{equation}
       ck^{\frac{1+\epsilon}{2}} \log\left( k^{1+\epsilon} \right)^{\frac{1}{2}} k \leq \left(\frac{k^5}{\log^\gamma(k)}\right)^{\frac{1}{3}} \leq t \leq k^2 \leq \frac{c'k^{2+2\epsilon}}{(1+\epsilon)\log(k)},
   \end{equation}
   for every $k \geq K_0(\mu) \vee C(\epsilon,\gamma)$, where we choose $\epsilon > 0$ small enough. Setting $\lambda = \frac{k^2}{t}$, we observe that with the above~\eqref{eq:Barlow-very-good-ball-conclusion} yields
   \begin{equation}
       \mathbb{P}^{\mathcal{C}_\infty}_x\left(T_{B_{\mathcal{C}_\infty}}(x,k) < \frac{k^2}{\lambda} \right) \leq c_2 \exp\left(-c_3 \lambda \right)
   \end{equation}
for every $x \in B_\infty(0,R^{\frac{\vartheta(1+\epsilon)}{3}}) \cap \mathcal{C}_\infty$, $k \geq K_0(\mu) \vee C(\epsilon,\gamma)$, and $1 \leq \lambda \leq (k^2\log^\gamma(k))^{\frac{1}{3}}$. Moreover, since $B_{\mathcal{C}_\infty}(x,k) \subseteq B_\infty(x,k)$, we can replace $T_{B_{\mathcal{C}_\infty}}$ in the display by $T_{B_\infty(x,k)}$ and obtain the same bound. Since $L_{x,k}$ under $\mathbb{P}_x$ has the same law as $T_{B_\infty(x,k)}$ under $\mathbb{P}_x^{\mathcal{C}_\infty}$, this proves the claim.
\end{proof}

\noindent \textbf{Acknowledgments.} MN was partially supported by Hong Kong RGC grants ECS 26301824 and GRF 16303825. The authors are grateful to Noam Berger and David Croydon for helpful discussions.

\bibliographystyle{plain}
\bibliography{biblio}

\end{document}